\newcommand{\mathscripty}{\mathscr}
\newcommand{\meet}{\land}
\newcommand{\join}{\lor}
\newcommand{\rs}{\mathord{\upharpoonright}}
\newcommand{\sm}{\setminus}
\newcommand{\sd}{\bigtriangleup}
\newcommand{\cat}{\smallfrown}
\newcommand{\restrict}{\restriction}
\newcommand{\QQ}{\mathbb{Q}}
\newcommand{\RR}{\mathbb{R}}
\newcommand{\CC}{\mathbb{C}}
\newcommand{\SA}{\mathscripty{A}}
\newcommand{\SB}{\mathscripty{B}}
\newcommand{\SF}{\mathscripty{F}}
\newcommand{\SI}{\mathscripty{I}}
\newcommand{\SJ}{\mathscripty{J}}
\newcommand{\SK}{\mathscripty{K}}
\newcommand{\SM}{\mathscripty{M}}
\newcommand{\SP}{\mathscripty{P}}
\newcommand{\ST}{\mathscripty{T}}
\newcommand{\dd}{\mathfrak{d}}
\newcommand{\MA}{\mathrm{MA}}
\newcommand{\OCA}{\mathrm{OCA}}
\newtheorem{theorem}{Theorem}[section]
\newtheorem*{theorem*}{Theorem}
\newtheorem*{proposition*}{Proposition}
\newtheorem{lemma}[theorem]{Lemma}
\newtheorem*{lemma*}{Lemma}
\newtheorem{corollary}[theorem]{Corollary}
\newtheorem*{corollary*}{Corollar}
\newtheorem*{fact*}{Fact}
\theoremstyle{definition}
\newtheorem{definition}[theorem]{Definition}
\newtheorem*{definition*}{Definition}
\newtheorem{claim}[theorem]{Claim}
\newtheorem*{claim*}{Claim}
\newtheorem*{conjecture*}{Conjecture}
\theoremstyle{remark}
\newtheorem*{example*}{Example}
\newtheorem{remark}[theorem]{Remark}
\newtheorem*{remark*}{Remark}
\newtheorem*{note*}{Note}
\newtheorem{question}{Question}
\newtheorem*{question*}{Question}
\newcommand{\set}[2]{\left\{#1\mathrel{}\middle|\mathrel{}#2\right\}}
\newcommand{\seq}[2]{\left\langle#1\mathrel{}\middle|\mathrel{}#2\right\rangle}
\newcommand{\card}[1]{\left| #1 \right|}
\DeclareMathOperator{\ran}{ran}
\DeclareMathOperator{\dom}{dom}
\DeclareMathOperator{\id}{id}
\DeclareMathOperator{\cf}{cf}
\DeclareMathOperator{\cov}{cov}
\DeclareMathOperator{\Fin}{\tt{Fin}}
\DeclareMathOperator{\Ctble}{\tt{Ctble}}
\DeclareMathOperator{\CSN}{CSN}
\title{Automorphisms of $\SP(\lambda)/\SI_\kappa$}
\address{Department of Mathematics, Miami University, Oxford, OH, 45056}
\author{Paul Larson}\thanks{The research of the first author is partially supported by NSF Grant DMS-1201494.}
\email{larsonpb@miamioh.edu}
\author{Paul McKenney}
\email{mckennp2@miamioh.edu}
\keywords{Automorphisms, Boolean algebras, Katowice problem}
\subjclass[2010]{03E35 (primary) 06E05 (secondary)}
\begin{document}

\begin{abstract}
  We study conditions on automorphisms of Boolean algebras of the form $\SP(\lambda)/\SI_{\kappa}$ (where $\lambda$ is
  an uncountable cardinal and $\SI_{\kappa}$ is the ideal of sets of cardinality less than $\kappa$) which allow one to
  conclude that a given automorphism is trivial. We show (among other things) that every 
  automorphism of $\SP(2^{\kappa})/\SI_{\kappa^{+}}$ which is trivial on all sets of cardinality $\kappa^{+}$ is
  trivial, and that MA$_{\aleph_{1}}$ implies both that every automorphism of $\SP(\RR)/\Fin$ is trivial on a cocountable
  set and that every automorphism of $\SP(\RR)/\Ctble$ is trivial.
\end{abstract}
	
\maketitle
	
\section{Introduction}

Given a set $X$ and an ideal $\SI$ on $X$, an automorphism of $\SP(X)/\SI$ is said to be trivial if it is induced by a
bijection between sets in $\SP(X) \setminus \SI$.\footnote{See Definition \ref{def:triviality} below for a more precise
formulation.} In 1956, Walter Rudin \cite{Rudin} showed that if the Continuum Hypothesis holds, then the set of
nontrivial automorphisms of $\SP(\omega)/\Fin$ has cardinality $2^{\aleph_{1}}$. Around 1980, Saharon Shelah
\cite{Shelah.PF} showed that consistently all automorphisms of $\SP(\omega)/\Fin$ are trivial. Boban Veli\v ckovi\'c
\cite{Velickovic.OCAA} later proved from $\OCA + \MA_{\aleph_{1}}$, a weak fragment of the Proper
Forcing Axiom, that all automorphisms of $\SP(\lambda)/\Fin$ are trivial, for all infinite cardinals $\lambda$. 
In the same paper, Veli\v ckovi\'c showed that the existence of nontrivial automorphisms of
$\SP(\omega)/\Fin$ is consistent with MA$_{\aleph_{1}}$.
	
The possibilities for automorphisms of structures of the form $\SP(\lambda)/\SI$, for $\lambda$ an uncountable cardinal
and $\SI$ an ideal containing $\Fin$, seem to be much less understood than the case $\lambda = \omega$. For instance, it appears to be unknown whether ZFC
proves that every automorphism of $\SP(\lambda)/\Fin$ is trivial off of a countable subset of $\lambda$, or that 
every automorphism of $\SP(\lambda)/\Ctble$ is trivial. Shelah and
Stepr{\=a}ns \cite{Shelah-Steprans.2} have recently shown, however, that for every $\lambda$ below the least strongly
inaccessible cardinal, every automorphism of $\SP(\lambda)/\Fin$ is trivial off of a subset of $\lambda$ of cardinality
$2^{\aleph_{0}}$.

Many questions about automorphisms of $\SP(\omega_{1})/\Fin$ are closely related to the question (due to Marian
Turzanski, and often called the Katowice Problem) of whether the Boolean algebras $\SP(\omega)/\Fin$ and
$\SP(\omega_{1})/\Fin$ can be isomorphic. There exists such an isomorphism if and only if there is an automorphism of
$\SP(\omega_{1})/\Fin$ which maps the equivalence class of some infinite set to the equivalence class of an infinite set
of a different cardinality (analogous possibilities exist at higher cardinals). We call automorphisms where this does
not happen cardinality-preserving.\footnote{See Definition \ref{def:card-pres} for a more precise formulation.}

In this paper we consider the ideals $\SI_{\kappa} = \{ X \mid |X| < \kappa\}$, for infinite cardinals $\kappa$. We
prove (Theorem \ref{thm:trivial-kappa+}) than a cardinality-preserving automorphism of $\SP(2^\kappa)/\SI_{\kappa^{+}}$
(for any infinite cardinal $\kappa$) which is trivial on all sets of cardinality $\kappa^{+}$ is trivial. Assuming a
weak fragment of Martin's Axiom, we prove (Theorem \ref{thm:sep->trivial}) the analogous result for automorphisms of
$\SP(\RR)/\Fin$ which are trivial on all countable sets. Assuming another fragment of Martin's Axiom, we show (see
Corollary~\ref{cor:zcons}) that every automorphism of $\SP(\RR)/\Ctble$ is
trivial, and also that every automorphism of $\SP(\RR)/\Fin$ is trivial off of a countable set.
	
In Section~\ref{sec:prelims} we prove several lemmas in a general setting that will be useful for work in later
sections.  In Section~\ref{sec:almost-trivial} we discuss almost-trivial automorphisms, and prove Theorem
\ref{thm:trivial-kappa+} (mentioned above).  In Section~\ref{sec:reals} we introduce a weak fragment of Martin's Axiom,
and use it to prove Theorem \ref{thm:sep->trivial}.  Section~\ref{sec:qsets} develops a necessary and sufficient
condition for when an isomorphism between two countable, atomless subalgebras of $\SP(\lambda)/\SI_\kappa$ can extend to
a trivial automorphism, when $\kappa$ has uncountable cofinality. We use this to study automorphisms of
$\SP(\omega_1)/\Ctble$ when an uncountable $Q$-set exists. In Section~\ref{sec:fixpoints}, we give some conditions on
automorphisms which imply the existence of fixed points. In Section~\ref{sec:ladders} we develop a connection between
ladder systems and non-fixed points of automorphisms of $\SP(\lambda)/\SI_{\kappa}$, showing in particular that if there
is a cardinality-preserving (see Definition \ref{def:card-pres}) automorphism of $\SP(\omega_1)/\Fin$ whose set of
ordinal fixed points is nonstationary, then $2^{\aleph_{0}} = 2^{\aleph_1}$.  Finally, Section~\ref{sec:questions}
contains a list of open questions.

\subsection{Notation}
\label{subsec:notation}

We write $A\sim_\kappa B$ to indicate $\card{A\sd B} < \kappa$.  Given $\sigma\in 2^{<\omega}$, we write $N_\sigma$ for
the set $\set{x\in 2^\omega}{\sigma \subset x}$. Given a set $X$ and a cardinal $\kappa$, we write $\SI_\kappa^X$ for
the ideal of subsets of $X$ of cardinality less than $\kappa$.  When there is no chance of confusion, we will drop the
$X$ and just write $\SI_\kappa$.  We write $\Fin$ for $\SI_{\aleph_{0}}$ and $\Ctble$ for $\SI_{\aleph_{1}}$.  If $\SI$
is an ideal on a set $X$ and $A\subseteq X$, we write $[A]_\SI$ for the equivalence class of $A$ in $\SP(X)/\SI$.  When
there is no chance of confusion, we will simply write $[X]$ instead.  We write $[X]_{\SI} \leq [Y]_{\SI}$ to mean that
$X \setminus Y \in \SI$ and $[X]_{\SI} < [Y]_{\SI}$ to mean that $[X]_{\SI} \leq [Y]_{\SI}$ and $[X]_{\SI} \neq
[Y]_{\SI}$.
	
\section{Preliminaries}
\label{sec:prelims}

\begin{definition}
  \label{def:triviality}
  Suppose that $\SI$ and $\SJ$ are ideals on sets $X$ and $Y$ respectively.  A homomorphism $\pi : \SP(X)/\SI \to
  \SP(Y)/\SJ$ is \emph{trivial} if there is a function $f : Y\to X$ such that $\pi([A]_\SI) = [f^{-1}(A)]_\SJ$ for all
  $A\subseteq X$.  Similarly, if $Z\subseteq X$ then we say that $\pi$ is \emph{trivial on $Z$} if there is a function
  $f : Y\to Z$ such that $\pi([A]_\SI) = [f^{-1}(A)]_\SJ$ for all $A\subseteq Z$.
\end{definition}

One gets equivalent definitions by allowing the domain of $f$ to be a subset of $Y$ with complement in $\SJ$. 
We say that such a function \emph{witnesses} the triviality of $\pi$.  
We use inverse images to describe trivial homomorphisms since these are guaranteed to
preserve the Boolean operations.  The following lemma shows that we can often work with forward images instead.

\begin{lemma}
  \label{lemma:trivial->bijection}
  Let $X$ and $Y$ be sets, let $\kappa$ be an infinite cardinal, and suppose that $f\colon Y \to X$ witnesses that $\pi
  : \SP(X)/\SI_\kappa\to\SP(Y)/\SI_\kappa$ is a trivial isomorphism. Then there are sets $E\subseteq X$ and $F\subseteq
  Y$ with $X\sm E\in\SI_\kappa^X$ and $Y\sm F\in\SI_\kappa^Y$, such that $f$ restricts to a bijection from $F$ to $E$.
  Moreover, $f^{-1} : E\to F$ witnesses that $\pi^{-1}$ is trivial.
\end{lemma}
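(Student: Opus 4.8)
The plan is to translate the hypothesis that $\pi([A]_{\SI_\kappa}) = [f^{-1}(A)]_{\SI_\kappa}$ defines an \emph{isomorphism} into three cardinality facts about the fibers of $f$, and then read $E$ and $F$ off directly. First I would record the reformulations of the isomorphism conditions. Since $\pi$ is a homomorphism it sends the zero element to the zero element, so $\card{A} < \kappa$ implies $\card{f^{-1}(A)} < \kappa$. Since $\pi$ is injective, $f^{-1}(C) \in \SI_\kappa^Y$ forces $C \in \SI_\kappa^X$. Since $\pi$ is surjective, every $B \subseteq Y$ satisfies $f^{-1}(A) \sim_\kappa B$ for some $A \subseteq X$. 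The first consequence is that $f$ is almost onto: taking $B = X \sm \ran f$ gives $f^{-1}(B) = \emptyset$, so $\pi([B]) = [\emptyset]$ and injectivity yields $X \sm \ran f \in \SI_\kappa^X$; this set will be the complement of $E$.

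The heart of the argument is controlling where $f$ fails to be injective. Let $D = \set{x \in \ran f}{\card{f^{-1}(\{x\})} \geq 2}$ be the set of bases of non-trivial fibers, and let $Z = f^{-1}(D)$ be their union. I claim $\card{D} < \kappa$. Suppose not; choose one point $y_x$ from each fiber over $x \in D$ and set $B_0 = \set{y_x}{x \in D}$, so $\card{B_0} \geq \kappa$. Using surjectivity, fix $A$ with $f^{-1}(A) \sim_\kappa B_0$. Then each $x \in D$ contributes a distinct point to $f^{-1}(A) \sd B_0$ — a point of $f^{-1}(\{x\}) \sm \{y_x\}$ when $x \in A$, and $y_x$ itself when $x \notin A$ — whence $\card{f^{-1}(A) \sd B_0} \geq \kappa$, a contradiction. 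With $\card{D} < \kappa$ in hand, the first reformulation gives $\card{Z} = \card{f^{-1}(D)} < \kappa$ at once. I expect this fiber-counting claim to be the main obstacle, and I would stress that the two-step route (bound the number of bad fibers, then apply $\pi$ to their union) is exactly what makes it work: the naive estimate of the total number of ``excess'' points as (number of bad fibers) times (fiber size) fails for singular $\kappa$, whereas passing through $f^{-1}(D)$ avoids the sum entirely.

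Finally I would assemble the bijection. Choosing a representative $y_x \in f^{-1}(\{x\})$ for every $x \in \ran f$ and setting $F = \set{y_x}{x \in \ran f}$, the map $f \rs F$ is injective with image $E := \ran f$; moreover $Y \sm F \subseteq Z$, so $Y \sm F \in \SI_\kappa^Y$, while $X \sm E \in \SI_\kappa^X$ by the first paragraph. Thus $f \rs F$ is a bijection from $F$ to $E$. For the ``moreover'' clause I set $g = (f \rs F)^{-1} \colon E \to F$, which is a legitimate witness since its domain $E$ is co-small in $X$, and I verify that $\pi^{-1}([B]) = [g^{-1}(B)]$ for every $B \subseteq Y$. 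Writing $g^{-1}(B) = f(B \cap F)$, a short computation shows $f^{-1}(f(B \cap F)) \sd B \subseteq Z$, so that $f^{-1}(g^{-1}(B)) \sim_\kappa B$; this says exactly $\pi([g^{-1}(B)]) = [B]$, i.e. $\pi^{-1}([B]) = [g^{-1}(B)]$. Apart from this verification and the counting claim above, everything is routine bookkeeping.
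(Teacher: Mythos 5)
Your proposal is correct and takes essentially the same route as the paper's proof: you derive $\card{X\sm\ran f}<\kappa$ from injectivity of $\pi$, bound the set $D$ of fibers of size at least $2$ by producing a set (your transversal $B_0$; the paper uses one half $B$ of a two-piece partition of $f^{-1}(A)$ with $f''(B)=f''(C)=A$, which amounts to the same counting) that cannot be $\sim_\kappa$ to any $f^{-1}(A)$, kill $f^{-1}(D)$ via the homomorphism property, and verify the ``moreover'' clause by the same computation $f^{-1}\bigl(f''(B\cap F)\bigr)\sim_\kappa B$. The only cosmetic difference is your choice of $E=\ran f$ with a chosen section $F$, versus the paper's $E=\set{x\in X}{\card{f^{-1}(x)}=1}$ and $F=f^{-1}(E)$, which differ only by a set of size less than $\kappa$.
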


\begin{proof}
  Suppose that $A = X\sm \ran{f}$ has cardinality $\ge \kappa$.  Then $[A]$ is nonzero, but $\pi([A]) = [f^{-1}(A)]$ is
  zero, contradiction.  Now suppose that there is a set $A\subseteq X$ such that $|f^{-1}(a)| \ge 2$ for all $a\in A$,
  and $\card{A}\ge \kappa$.  Then $f^{-1}(A)$ has cardinality $\ge \kappa$.  Let $f^{-1}(A) = B\cup C$ be a partition
  such that $f''(B) = f''(C) = A$ and $\card{B},\card{C} \ge \kappa$. Then there is no $D$ such that $f^{-1}(D)
  \sim_{\kappa} B$, a contradiction of the fact that $[B]$ is in the range of $\pi$.  Let $E = \set{x\in
  X}{\card{f^{-1}(x)} = 1}$ and $F = f^{-1}(E)$.  It follows that $f$ restricts to a bijection from $F$ to $E$, and
  $\card{X\sm E},\card{Y\sm F} < \kappa$.
	
  For the last part of the lemma, we want to see that for each $A \subseteq Y$, $\pi^{-1}([A]) = [(f^{-1})^{-1}(A)]$,
  i.e., that $\pi([(f^{-1})^{-1}(A)]) = [A]$.  Now, $(f^{-1})^{-1}(A) = f''(A \cap F)$, so we want $\pi([f''(A \cap F)])
  = [A]$. We have that $\pi([f''(A \cap F)]) = [f^{-1}(f''(A \cap F))]$.  Since $f^{-1}(f''(A \cap F)) \cap F = A \cap
  F$, $[f^{-1}(f''(A \cap F))] = [A \cap F]$, which is the same as $[A]$.

	\end{proof}


\begin{definition}
  A Boolean algebra $\SB$ is \emph{$<\kappa$-complete} if every subset $\SA$ of $\SB$ with cardinality $< \kappa$ has a
  least upper bound.
\end{definition}

\begin{remark}
  The Boolean algebra $\SP(X)/\SI_\kappa$ is $<\cf{\kappa}$-complete.  In particular, if $\SA$ is a family of subsets of
  $X$, and $\card{\SA} < \cf{\kappa}$, then $[\bigcup\SA]$ is a least upper bound for the set $\set{[A]}{A\in\SA}$.
\end{remark}

It is a well-known open question (asked by Marian Turzanski) whether the Boolean algebras $\SP(\omega_{1})/\Fin$ and
$\SP(\omega)/\Fin$ can consistently be isomorphic. In many of the arguments in this paper, we must allow for the
possibility that such an isomorphism exists (as well as analogous isomorphisms at other cardinals). We record here some
facts that we use to deal with this possibility. The following theorem was proved by Balcar and Frankiewicz
\cite{Balcar-Frankiewicz} in the case $\lambda = \omega$ and $\mu = \omega_{1}$; their proof gives the general version
below.

\begin{theorem}(Balcar, Frankiewicz)
  \label{thm:balcar-frankiewicz}
  Suppose $\kappa \le \lambda < \mu$, and $\kappa$ is regular.  If $\SP(\lambda)/\SI_\kappa$ and $\SP(\mu)/\SI_\kappa$
  are isomorphic, then $\lambda = \kappa$ and $\mu = \kappa^+$.
\end{theorem}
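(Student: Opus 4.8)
The plan is to prove the Balcar--Frankiewicz theorem by analyzing what an isomorphism between $\SP(\lambda)/\SI_\kappa$ and $\SP(\mu)/\SI_\kappa$ forces about the cardinal arithmetic and combinatorial structure of $\lambda$ and $\mu$. Suppose $\Phi \colon \SP(\lambda)/\SI_\kappa \to \SP(\mu)/\SI_\kappa$ is an isomorphism with $\kappa \le \lambda < \mu$ and $\kappa$ regular. The main structural feature to exploit is that $\SP(X)/\SI_\kappa$ for regular $\kappa$ is a $<\kappa$-complete Boolean algebra (by the Remark above, since $\cf\kappa = \kappa$), and the two relevant cardinal invariants — the number of atoms and the behavior of maximal antichains — must be preserved by $\Phi$. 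Note first that $\SP(X)/\SI_\kappa$ is atomless precisely when $\card{X} > \kappa$ (when $\card X = \kappa$ there are no atoms at all, as every singleton is already zero, but one must check the relevant case: an atom would be the class of a set of size exactly... here one sees there are no atoms once $\card X \ge \kappa$, so atomlessness is automatic and not the distinguishing invariant). The distinguishing invariant I would use instead is the \emph{height} or \emph{completeness degree}: the supremum of cardinalities of families admitting least upper bounds, together with the structure of partitions of unity.

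The key step is to produce, inside each algebra, a combinatorial object whose existence pins down $\card{X}$ modulo $\kappa$. First I would observe that $\SP(X)/\SI_\kappa$ admits a partition of its unit into exactly $\card X$ pieces each of cardinality $\kappa$ (partition $X$ into $\card X$ blocks of size $\kappa$, assuming $\card X \ge \kappa$), and that any partition of unity into nonzero pieces has size at most $\card X$. Thus $\card X$ is recoverable from the maximum size of a partition of the unit of $\SP(X)/\SI_\kappa$ into nonzero elements; call this invariant $m(X)$. Since $\Phi$ is an isomorphism, it preserves partitions of unity and their cardinalities, so $m(\lambda) = m(\mu)$. The heart of the matter is to show that $m(X)$ being an isomorphism invariant, combined with the $<\kappa$-completeness, forces $\lambda$ and $\mu$ into the narrow range claimed. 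The obstacle is that $m(X) = \card X$ does not immediately distinguish $\lambda$ from $\mu$ when $\lambda < \mu$ are both large; a cruder counting of antichains is not enough, so I expect the real work to lie in a finer invariant.

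To get the exact conclusion $\lambda = \kappa$ and $\mu = \kappa^+$, I would follow Balcar and Frankiewicz in using the \emph{tower} or \emph{independence} structure and a reflection argument. The plan is: show that if $\lambda > \kappa$, then $\SP(\lambda)/\SI_\kappa$ contains, densely below any nonzero element, copies of $\SP(\lambda)/\SI_\kappa$ itself with a strong homogeneity, whereas the same self-similarity at $\mu$ together with $\lambda < \mu$ yields an embedding $\SP(\mu)/\SI_\kappa \hookrightarrow \SP(\lambda)/\SI_\kappa$ that cannot be onto unless the cardinalities match exactly at the bottom level. More concretely, one argues that an isomorphism forces a bijection-like correspondence on a club of "levels," and the only way the level structures of $\lambda$ and $\mu$ can be matched under a $<\kappa$-complete isomorphism is if $\mu = \lambda^+$ and $\lambda = \kappa$; any larger gap produces an antichain of size $\mu$ in the target with no preimage antichain of that size in the source, since the source has at most $\lambda^{<\kappa}$-many elements in a maximal almost disjoint family while the target demands $\mu$-many. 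I would make this precise by counting maximal almost disjoint families modulo $\SI_\kappa$ on each side.

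The step I expect to be the genuine obstacle is the final cardinal-arithmetic pinch showing $\lambda = \kappa$ (rather than merely $\mu = \lambda^+$). For this I would invoke the regularity of $\kappa$ crucially: regularity guarantees that $\SI_\kappa$ is $\kappa$-complete as an ideal, so a well-ordered increasing $\kappa$-sequence of sets cannot have its union jump outside the ideal prematurely, and this controls the cofinal structure of the algebra. I would then argue that if $\kappa < \lambda < \mu$, both algebras would be $<\kappa$-complete but \emph{not} $<\kappa^+$-complete in a way detectable by an isomorphism-invariant gap, and that matching the "first failure of completeness" across $\Phi$ forces $\lambda$ and $\mu$ to sit consecutively just above $\kappa$, giving exactly $\lambda = \kappa$, $\mu = \kappa^+$. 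The delicate point, and where I would spend the most care, is verifying that the invariant I extract is genuinely preserved by an \emph{arbitrary} isomorphism (not just a trivial one), since we are explicitly forbidden from assuming $\Phi$ is induced by a function; this is precisely the subtlety that makes the theorem nontrivial and is where I would check that the partition and completeness invariants are purely algebraic.
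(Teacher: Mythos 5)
You should note first that the paper contains no proof of this statement at all: it is quoted with attribution, and the text says only that Balcar and Frankiewicz's proof for $(\omega,\omega_1)$ ``gives the general version.'' So your proposal can only be judged on its own merits, and on those merits it has a fatal, concrete error at its core. Your invariant $m(X)$ does not exist as described: it is false that every partition of unity of $\SP(X)/\SI_\kappa$ into nonzero pieces has size at most $\card{X}$. Already in $\SP(\omega)/\Fin$ a maximal almost disjoint family is an antichain with supremum $1$ of size up to $\mathfrak{c}$, and for every infinite $\kappa$ there is an almost disjoint (mod $\SI_\kappa$) family of $\kappa^+$ many subsets of $\kappa$, each of size $\kappa$. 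The same mistake recurs later when you cap maximal almost disjoint families in the source at $\lambda^{<\kappa}$ (again false for $\kappa=\lambda=\omega$, where MAD families have size $>\omega=\lambda^{<\kappa}$). There is also a decisive sanity check you should have run: any ZFC-provable algebraic invariant of $\SP(X)/\SI_\kappa$ that computes $\card{X}$ would in particular prove $\SP(\kappa)/\SI_\kappa\not\cong\SP(\kappa^+)/\SI_\kappa$, answering the Katowice problem negatively --- precisely the case the theorem is carefully formulated to leave open. So the strategy of recovering $\card{X}$ from partitions of unity is doomed in principle, not merely in execution.

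The final ``pinch'' rests on a second misconception: the completeness degree of $\SP(X)/\SI_\kappa$ depends only on $\kappa$, not on $\card{X}$. For regular $\kappa$, both $\SP(\lambda)/\SI_\kappa$ and $\SP(\mu)/\SI_\kappa$ are $<\kappa$-complete and both fail $\kappa$-completeness (strictly increasing $\kappa$-chains have no suprema), so ``matching the first failure of completeness'' across $\Phi$ conveys no information whatsoever, and certainly cannot produce the asymmetric conclusion $\lambda=\kappa$, $\mu=\kappa^+$. What remains of your write-up is programmatic (``I would follow Balcar and Frankiewicz,'' ``I would make this precise'') with the actual arguments deferred. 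Genuine invariants here look quite different: for instance, $\card{\SP(\lambda)/\SI_\kappa}=2^\lambda$, so an isomorphism at least forces $2^\lambda=2^\mu$; and for regular $\mu>\kappa$ the chain $\seq{[\mu\sm\alpha]}{\alpha<\mu}$ is strictly decreasing with no nonzero lower bound, since a lower bound $B$ would satisfy $\card{B\cap\alpha}<\kappa$ for all $\alpha<\mu$ and hence $\card{B}<\kappa$ by regularity of $\mu$. Distinguishing the algebras via the possible lengths of such towers and associated cardinal arithmetic --- while leaving the single pair $(\kappa,\kappa^+)$ untouched --- is the kind of machinery the theorem actually requires, and none of it, nor any workable substitute, is present in your proposal.
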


We will also make use of the following fact, where $\dd$ is the minimal cardinality of a set $X$, consisting of
functions from $\omega$ to $\omega$, such that every such function is dominated everywhere by a member of $X$.

\begin{theorem}(Balcar, Frankiewicz)
  \label{thm:balcar-frankiewicz:dd}
  If $\SP(\omega)/\Fin$ and $\SP(\omega_1)/\Fin$ are isomorphic, then $\dd = \omega_1$.
\end{theorem}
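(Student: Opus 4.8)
The plan is to establish $\dd = \aleph_1$ by proving $\dd \le \aleph_1$; the reverse inequality $\dd \ge \aleph_1$ is standard, since no countable family can dominate (diagonalize). Fix an isomorphism $\Phi \colon \SP(\omega_1)/\Fin \to \SP(\omega)/\Fin$. The idea is to transport the canonical increasing filtration of $\SP(\omega_1)/\Fin$ — the chain of classes $[\alpha]$ of the initial segments $\alpha$, for $\alpha < \omega_1$ — into $\SP(\omega)/\Fin$ and read off a dominating family of size $\aleph_1$. Choosing representatives $A_\alpha \subseteq \omega$ with $\Phi([\alpha]) = [A_\alpha]$, the chain becomes $\subseteq^*$-increasing, and since every countable subset of $\omega_1$ is bounded while $\omega_1$ itself is unbounded, one checks that $\bigvee_\alpha [A_\alpha] = [\omega]$ is a least upper bound. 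Equivalently, the complements $D_\alpha = \omega \sm A_\alpha$ form a strictly decreasing $\omega_1$-tower with no infinite pseudo-intersection (no infinite $C$ with $C \subseteq^* D_\alpha$ for all $\alpha$), since a common pseudo-intersection would pull back under $\Phi$ to an infinite subset of $\omega_1$ meeting every initial segment in a finite set, hence finite.

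Next I would let $g_\alpha \in \omega^\omega$ be the increasing enumeration of $D_\alpha$ and propose $\{g_\alpha : \alpha < \omega_1\}$ as a dominating family; as $D_\beta \subseteq^* D_\alpha$ for $\alpha < \beta$, the sequence $\langle g_\alpha \rangle$ is $\le^*$-increasing, where $\le^*$ denotes eventual dominance. The crucial point is that this family is not merely unbounded but cofinal in $(\omega^\omega, \le^*)$. To see this I would exploit more than the bare tower: the chain $\langle A_\alpha \rangle$ is the $\Phi$-image of the whole $\in$-hierarchy on $\omega_1$, so $\mathcal{C} = \Phi[\{[X] : X \subseteq \omega_1 \text{ countable}\}]$ is a proper, tall ideal of $\SP(\omega)/\Fin$ having $\langle [A_\alpha] \rangle$ as a cofinal chain, and — since $\Phi$ carries each relative subalgebra isomorphically onto a relative subalgebra — the same configuration recurs below every nonzero element. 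Using this uniformity together with the regularity of $\omega_1$, for an arbitrary $f \in \omega^\omega$ I would locate a level $\alpha$ at which the relevant witness lands in $\mathcal{C}$, i.e.\ is bounded by $[A_\alpha]$, and translate this back into $f \le^* g_\alpha$.

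The main obstacle is precisely this last upgrade from unbounded to dominating: the bare existence of an $\omega_1$-tower only yields $\mathfrak{t} \le \aleph_1$, which is strictly weaker than $\dd = \aleph_1$, so the argument must use the full isomorphism rather than a single transported chain. I expect the technical heart to be twofold. First, one must arrange honestly $\subseteq^*$-monotone representatives through limit stages of countable cofinality; the failure of the filtration to be continuous at such limits is, I expect, exactly the phenomenon that encodes domination. Second, one must convert the order-theoretic assertion that a given $[Y]$ is bounded in $\mathcal{C}$ into the arithmetic assertion $f \le^* g_\alpha$, via a careful interval-partition coding of $f$. Once the cofinality of $(\omega^\omega, \le^*)$ is pinned to $\aleph_1$ in this way, $\dd \le \aleph_1$ follows, completing the proof.
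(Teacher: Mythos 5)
The paper offers no proof of this statement: it is quoted as a known theorem of Balcar and Frankiewicz, so your attempt must be judged on its own terms against the known argument rather than against a proof in the text. Your setup is sound as far as it goes: transporting the initial segments to a $\subseteq^*$-increasing chain $\langle A_\alpha : \alpha < \omega_1\rangle$ whose least upper bound is $[\omega]$, equivalently a tower $D_\alpha = \omega\sm A_\alpha$ with no infinite pseudo-intersection, is correct, and your pullback justification for the lub claim is right. You are also right, and commendably explicit, that this alone only yields $\mathfrak{t}=\aleph_1$, which is strictly weaker (in the Cohen model $\mathfrak{t}=\aleph_1<\dd$), so the full isomorphism must be used.

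But that is precisely where the proposal stops being a proof: the entire content of the theorem is the step you defer to ``locate a level $\alpha$ \dots and translate this back into $f\le^* g_\alpha$,'' and no argument is supplied for it. Indeed, your candidate dominating family cannot be vindicated by the mechanism you sketch. The assertion $f\le^* g_\alpha$ is the quantitative statement that $\card{D_\alpha\cap f(k)}\le k$ for almost all $k$, i.e.\ that $D_\alpha$ is globally sparse relative to $f$; whereas membership of a witness in $\mathcal{C}=\Phi[\set{[X]}{X\subseteq\omega_1\text{ countable}}]$ only says that some infinite set (say, a subset of a set of iterates coded from $f$) is almost contained in $A_\alpha$, which is fully compatible with $D_\alpha$ being as dense as one likes below each $f(k)$. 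What is actually needed is that for every interval partition there is an $\alpha$ such that $D_\alpha$ meets almost every interval in a small set, and neither tallness of $\mathcal{C}$, nor its $\sigma$-directedness (which you could have extracted but did not), nor the regularity of $\omega_1$ produces such an $\alpha$ without a genuinely new construction. Two smaller points: the claim that ``the same configuration recurs below every nonzero element'' is false for elements of $\mathcal{C}$ itself, since below the image of a countable set the relative algebra is a copy of $\SP(\omega)/\Fin$ with no transported $\omega_1$-chain (it recurs only below images of uncountable sets); and while your intuition that the discontinuity at countable-cofinality limits encodes the domination is a good one --- pullbacks of ladders on a limit $\delta$ give infinite sets almost inside $A_\delta$ with finite trace on every $A_\alpha$, $\alpha<\delta$ --- the proposal never converts this data into functions, so the theorem remains unproved.
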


Finally, we make the following definition.

\begin{definition}\label{def:card-pres}
  A homomorphism $\pi : \SP(X)/\SI\to \SP(Y)/\SJ$ is \emph{cardinality-preserving} if for every $A\subseteq X$, there is
  some $B\subseteq Y$ such that $\card{A} = \card{B}$ and $\pi([A]) = [B]$. 
\end{definition}

\begin{remark}
  An isomorphism $\pi : \SP(X)/\SI_\kappa\to \SP(Y)/\SI_\kappa$ is cardinality-preserving if and only if for all
  $A\subseteq X$, and $B\subseteq Y$, if $\card{A},\card{B}\ge\kappa$ and $\pi([A]) = [B]$ then $\card{A} = \card{B}$.
  By Theorem \ref{thm:balcar-frankiewicz}, for any pair of infinite cardinals $\kappa < \lambda$, there
  exists an automorphism of $\SP(\lambda)/\SI_{\kappa}$ which is not cardinality-preserving if and only if there is an isomorphism
  between $\SP(\kappa)/\SI_{\kappa}$ and $\SP(\kappa^{+})/\SI_{\kappa}$. 
\end{remark}

In our first application of the notion of cardinality-preservation, we show that it allows one to lift automorphisms on
Boolean algebras of the form $\SP(\lambda)/\SI_\kappa$ to ones of the form $\SP(\lambda)/\SI_{\mu}$, when
$\kappa \le \mu \le \lambda$.

\begin{definition}
  Suppose that $\SI$ and $\SJ$ are ideals on sets $X$ and $Y$ respectively, and that $\pi : \SP(X)/\SI \to \SP(Y)/\SJ$
  is a function. A \emph{selector} for $\pi$ is a map $\pi^{*} \colon \SP(X) \to \SP(Y)$ such that $\pi([A]) =
  [\pi^{*}(A)]$ for all $A \subseteq X$.
\end{definition}

\begin{remark}
  A \emph{selector}, in the literature, often denotes a function which is constant on equivalence classes.  Our
  definition does not make this requirement, and in fact we will often instead take selectors which form bijections
  between equivalence classes.
\end{remark}

\begin{lemma}
  \label{lemma:additivity}
  Let $\kappa \le \mu \le \lambda$ be infinite cardinals, and let $\pi$ be an automorphism of
  $\SP(\lambda)/\SI_{\kappa}$. Suppose that at least one of the following holds:
	\begin{itemize}
	  \item $\pi$ is cardinality-preserving;
	  \item $\mu > \kappa^{+}$ and $\kappa$ is regular.
	\end{itemize}
  Then $\pi$ induces an automorphism $\pi_\mu$ of $\SP(\lambda)/\SI_\mu$.  In particular, if $\SA$ is a family of
  subsets of $\lambda$, $\card{\SA} < \cf{\mu}$, and $\pi^*$ is a selector for $\pi$, then
  \[
    \pi^*\left(\bigcup\SA\right) \sim_\mu \bigcup\set{\pi^*(A)}{A\in\SA}
  \]
\end{lemma}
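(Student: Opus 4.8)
The plan is to reduce everything to showing that $\pi$ preserves the ideal $J = \set{[A]_{\SI_\kappa}}{\card{A} < \mu}$ of $\SP(\lambda)/\SI_\kappa$. Since $\kappa \le \mu$, membership in $J$ depends only on the $\SI_\kappa$-class of a set, so $J$ is well-defined, and $(\SP(\lambda)/\SI_\kappa)/J$ is canonically isomorphic to $\SP(\lambda)/\SI_\mu$. Fixing a selector $\pi^*$ for $\pi$, it suffices to prove that for every $A \subseteq \lambda$,
\[ \card{A} < \mu \iff \card{\pi^*(A)} < \mu, \]
since this says exactly that $\pi([A]_{\SI_\kappa}) \in J$ if and only if $[A]_{\SI_\kappa} \in J$, whence $\pi(J) = J$ and $\pi$ descends to an automorphism $\pi_\mu$ of the quotient, given by $\pi_\mu([A]_{\SI_\mu}) = [\pi^*(A)]_{\SI_\mu}$.

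First suppose $\pi$ is cardinality-preserving. Then for each $A$ there is $B$ with $\card{A} = \card{B}$ and $\pi([A]_{\SI_\kappa}) = [B]_{\SI_\kappa}$, so $\card{\pi^*(A) \sd B} < \kappa \le \mu$. Since $\mu$ is a cardinal, $\card{\pi^*(A)}$ and $\card{B} = \card{A}$ are then either both $< \mu$ or both $\ge \mu$, which gives the displayed equivalence at once.

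The main case is when $\mu > \kappa^+$ and $\kappa$ is regular, and here I would argue by contradiction using Theorem \ref{thm:balcar-frankiewicz}. Suppose the equivalence fails for some $A$; then one of $\card{A}$, $\card{\pi^*(A)}$ is $< \mu$ and the other is $\ge \mu$, and it is enough to treat the case $\card{A} < \mu \le \card{\pi^*(A)}$, the reverse inequality being identical after swapping the roles of $[A]_{\SI_\kappa}$ and its image. Note $\card{A} \ge \kappa$, since $\pi([A]_{\SI_\kappa}) = [\pi^*(A)]_{\SI_\kappa}$ is nonzero (as $\card{\pi^*(A)} \ge \mu > \kappa$), forcing $[A]_{\SI_\kappa} \ne 0$. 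Writing $B = \pi^*(A)$, the automorphism $\pi$ restricts to an isomorphism of the relative algebras $(\SP(\lambda)/\SI_\kappa)\rs[A] \to (\SP(\lambda)/\SI_\kappa)\rs[B]$; each of these is canonically isomorphic to $\SP(A)/\SI_\kappa^{A}$ and $\SP(B)/\SI_\kappa^{B}$ respectively, and hence, via bijections onto the cardinals $\card{A}$ and $\card{B}$, to $\SP(\card{A})/\SI_\kappa$ and $\SP(\card{B})/\SI_\kappa$. Thus $\SP(\card{A})/\SI_\kappa \cong \SP(\card{B})/\SI_\kappa$ with $\kappa \le \card{A} < \card{B}$ and $\kappa$ regular, so Theorem \ref{thm:balcar-frankiewicz} forces $\card{A} = \kappa$ and $\card{B} = \kappa^+$, contradicting $\card{B} \ge \mu > \kappa^+$. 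I expect this reduction to the relative algebras, together with the identification of $(\SP(\lambda)/\SI_\kappa)\rs[A]$ with $\SP(\card{A})/\SI_\kappa$, to be the crux of the argument; everything else is bookkeeping.

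Finally, for the ``in particular'' clause, let $\SA$ be a family of subsets of $\lambda$ with $\card{\SA} < \cf\mu$. By the remark on $<\cf\kappa$-completeness applied to $\mu$, the class $[\bigcup\SA]_{\SI_\mu}$ is the least upper bound of $\set{[A]_{\SI_\mu}}{A \in \SA}$ in $\SP(\lambda)/\SI_\mu$, and likewise $[\bigcup\set{\pi^*(A)}{A\in\SA}]_{\SI_\mu}$ is the least upper bound of $\set{[\pi^*(A)]_{\SI_\mu}}{A\in\SA}$. Since the automorphism $\pi_\mu$ preserves all existing suprema and satisfies $\pi_\mu([A]_{\SI_\mu}) = [\pi^*(A)]_{\SI_\mu}$, applying $\pi_\mu$ to the first least upper bound produces the second; comparing representatives then yields $\pi^*(\bigcup\SA) \sim_\mu \bigcup\set{\pi^*(A)}{A\in\SA}$, as required.
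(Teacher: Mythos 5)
Your proposal is correct and follows essentially the same route as the paper: the paper's (much terser) proof likewise uses Theorem \ref{thm:balcar-frankiewicz} in the case $\mu > \kappa^{+}$, $\kappa$ regular, to show $\pi$ maps the ideal $\SI_\mu/\SI_\kappa$ into itself, and then derives the displayed equation from $<\cf\mu$-completeness. Your reduction to relative algebras below $[A]$ and $[\pi^*(A)]$, identified with $\SP(\card{A})/\SI_\kappa$ and $\SP(\card{\pi^*(A)})/\SI_\kappa$, is exactly the detail the paper leaves implicit in the phrase ``our assumptions on $\pi$ \ldots imply that $\pi$ takes the subalgebra $\SI_\mu/\SI_\kappa$ into itself.''
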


\begin{proof}
  Our assumptions on $\pi$ (using Theorem \ref{thm:balcar-frankiewicz} in the case where $\mu > \kappa^{+}$ and
  $\kappa$ is regular) imply that $\pi$ takes the subalgebra $\SI_\mu / \SI_\kappa$ into itself.  It follows that if
  $\pi^*$ is a selector for $\pi$, then the map \[\pi_\mu([A]_\mu) = [\pi^*(A)]_\mu\] is well-defined and an automorphism
  of $\SP(\lambda)/\SI_\mu$.  The rest follows from the $<\cf{\mu}$-completeness of the Boolean algebra
  $\SP(\lambda)/\SI_\mu$.
\end{proof}




The function $\pi_{\mu}$ from the proof of Lemma \ref{lemma:additivity} clearly does not depend on the choice of $\pi^{*}$. 
We make the following definition, which will be used in Section \ref{sec:almost-trivial}. 

\begin{definition}
\label{definition:pimu}
  Let $\kappa \le \mu \le \lambda$ be infinite cardinals, and let $\pi$ be an automorphism of
  $\SP(\lambda)/\SI_{\kappa}$. We let $\pi_{\mu}$ be the function on $\SP(\lambda)/\SI_{\mu}$ defined by setting 
  $\pi_{\mu}([A]_{\mu}) = [\pi^{*}(A)]_{\mu}$, for each $A \subseteq \lambda$ and any selector $\pi^{*}$ for $\pi$. 
\end{definition}


Lemma~\ref{lemma:equal-on-kappa} shows that every automorphism of a Boolean algebra of the
form $\SP(\lambda)/\SI_{\kappa}$ is determined by how it acts on sets of cardinality $\kappa$.

%

\begin{lemma}
  \label{lemma:equal-on-kappa}
  Let $\kappa \le \lambda$ and let $\pi$ and $\rho$ be automorphisms of $\SP(\lambda)/\SI_\kappa$.  Then if
  $\pi\neq\rho$, there is some $X\subseteq\lambda$ of cardinality $\kappa$ such that $\pi([X]) \neq \rho([X])$.
  Moreover, for each $X \subseteq \lambda$ such that $\pi([X]) \not\leq \rho([X])$, there exists $Y \in [X]^{\kappa}$
  such that $\pi([Y]) \not \leq \rho([Y])$.
\end{lemma}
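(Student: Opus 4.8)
The plan is to prove the ``moreover'' clause first and then read off the main statement from it. So suppose $X\subseteq\lambda$ satisfies $\pi([X])\not\leq\rho([X])$. Fix selectors $\pi^{*}$ and $\rho^{*}$ for $\pi$ and $\rho$, and set $A = \pi^{*}(X)$ and $B = \rho^{*}(X)$, so that $\pi([X]) = [A]$ and $\rho([X]) = [B]$. By the definition of $\leq$, the hypothesis $\pi([X])\not\leq\rho([X])$ unwinds to $\card{A\sm B}\geq\kappa$. I would then put $D = A\sm B$, so that $D\subseteq A$, $D\cap B = \emptyset$, and $\card{D}\geq\kappa$.

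Next I would pull $D$ back under $\pi$. Since $D\subseteq A$ we have $[D]\leq[A] = \pi([X])$, hence $\pi^{-1}([D])\leq[X]$, so there is $Y_{0}\subseteq X$ with $[Y_{0}] = \pi^{-1}([D])$ and therefore $\pi([Y_{0}]) = [D]$. As $\card{D}\geq\kappa$ we have $[D]\neq 0$, and since $\pi$ is injective $[Y_{0}]\neq 0$, so $\card{Y_{0}}\geq\kappa$.

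The key observation — and the point that I expect to be the only real obstacle — is that disjointness of the two images is inherited by every subset of $Y_{0}$, which lets me shrink $Y_{0}$ freely. Concretely, for any $Y\subseteq Y_{0}$ we have $\pi([Y])\leq\pi([Y_{0}]) = [D]$, while $\rho([Y])\leq\rho([Y_{0}])\leq\rho([X]) = [B]$; since $D\cap B = \emptyset$ this forces $\pi([Y])\land\rho([Y]) = 0$. Now pick any $Y\in[Y_{0}]^{\kappa}$, which exists because $\card{Y_{0}}\geq\kappa$. Then $Y\subseteq Y_{0}\subseteq X$, and $[Y]\neq 0$ gives $\pi([Y])\neq 0$; combined with $\pi([Y])\land\rho([Y]) = 0$ this yields $\pi([Y])\not\leq\rho([Y])$, as required. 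I emphasize that a naive attempt would worry that a failure of cardinality-preservation could inflate the pullback $Y_{0}$ beyond $\kappa$; the observation that disjointness passes to all subsets of $Y_{0}$ is precisely what dispenses with this concern, so no appeal to Theorem~\ref{thm:balcar-frankiewicz} is needed.

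Finally, for the main statement I would argue as follows. Suppose $\pi\neq\rho$ and choose $W\subseteq\lambda$ with $\pi([W])\neq\rho([W])$. Since $\pi([W])\leq\rho([W])$ together with $\rho([W])\leq\pi([W])$ would force equality, at least one of $\pi([W])\not\leq\rho([W])$ and $\rho([W])\not\leq\pi([W])$ must hold. Applying the ``moreover'' clause to the pair $(\pi,\rho)$ in the first case, and to the pair $(\rho,\pi)$ in the second, produces $Y\in[W]^{\kappa}$ with $\pi([Y])\neq\rho([Y])$; this $Y$ has cardinality $\kappa$, which completes the argument.
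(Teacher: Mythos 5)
Your proof is correct and takes essentially the same route as the paper's: the paper composes with $\rho^{-1}$ to assume $\rho = \id$ and then forms $W = (\pi^*)^{-1}(\pi^*(X)\sm X)$, which is exactly your pullback $Y_0$ of $D$, before shrinking to a subset of $W\cap X$ of cardinality $\kappa$ for the same disjointness reason you give. The only cosmetic differences are that you keep both automorphisms in play (arguing abstractly that disjointness of $[D]$ and $[B]$ passes to all subsets of $Y_0$, where the paper counts cardinalities via a bijective selector) and that you derive the first clause from the ``moreover'' clause by swapping $(\pi,\rho)$, where the paper instead passes to the complement of $X$.
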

	
\begin{proof}
  By composing with $\rho^{-1}$, we may assume that $\rho = \id$.  Fix a bijective selector $\pi^*$ for $\pi$, and
  choose $X\subseteq\lambda$ such that $\pi([X])\neq [X]$.  Without loss of generality, by choosing between $X$ and its
  complement we may assume $\pi^*(X)\sm X$ has cardinality $\ge\kappa$. Let $$W = (\pi^{*})^{-1}(\pi^*(X)\sm X).$$ Then
  $|W| \geq \kappa$ and $|W \setminus X| < \kappa$. Let $Y$ be a subset of $W \cap X$ of cardinality $\kappa$.  Then
  $\pi^{*}(Y)$ has cardinality at least $\kappa$, and \[|\pi^{*}(Y) \setminus (\pi^{*}(X)\setminus X)| < \kappa.\] It
  follows that $|\pi^{*}(Y) \setminus Y| \geq \kappa$, so $\pi([Y]) \not\leq [Y]$.
\end{proof}
		

\section{Almost-trivial automorphisms of $\SP(\lambda)/\SI_\kappa$}
\label{sec:almost-trivial}

\begin{definition}
  Given an automorphism $\pi$ of $\SP(\lambda)/\SI_\kappa$, we define $\ST(\pi)$ to be the ideal of subsets $A$ of
  $\lambda$ such that $\pi$ is trivial on $A$.   We let $\SA_\mu(\pi)$ be the ideal generated by $\ST(\pi)$ and
  $\SI_\mu$.

  If $A\in \SA_\mu(\pi)$, then we say $\pi$ is \emph{$\mu$-almost trivial} on $A$.  If $\pi$ is $\mu$-almost trivial on
  $\lambda$, we just say that $\pi$ is \emph{$\mu$-almost trivial}.  If $\pi$ is $\kappa^+$-almost trivial on a set $A$,
  then we just say that $\pi$ is almost trivial on $A$.
\end{definition}

In Lemma \ref{lemma:bootstrap} we show that if two automorphisms lift to the same automorphism, then they are the same
off of a small set.

\begin{lemma}
  \label{lemma:bootstrap}
  Suppose that $\kappa < \mu \le \lambda$ are infinite cardinals, with $\kappa$ and $\mu$ regular, and let $\pi$ and
  $\rho$ be automorphisms of $\SP(\lambda)/\SI_\kappa$. Suppose that either $\mu > \kappa^{+}$ or that both $\pi$ and
  $\rho$ are cardinality-preserving. If $\pi_\mu = \rho_\mu$ then there is some $A\in\SI_\mu$ such that for all
  $X\subseteq\lambda\sm A$, $\pi([X]) = \rho([X])$.
\end{lemma}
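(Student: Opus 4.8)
The plan is to reduce to the case $\rho=\id$ and then prove a one-sided strengthening by a transfinite recursion of length $\mu$ that accumulates ``fresh'' errors until they contradict $\pi_\mu=\id$. First I would set $\tau=\rho^{-1}\circ\pi$. Composing selectors of $\rho^{-1}$ and $\pi$ gives a selector of $\tau$, so the assignment $(\cdot)_\mu$ respects composition, whence $\tau_\mu=(\rho_\mu)^{-1}\circ\pi_\mu=\id$ (using $\pi_\mu=\rho_\mu$, and that $\rho_\mu,\pi_\mu$ exist by Lemma~\ref{lemma:additivity}). Moreover $\tau$ inherits the hypotheses: it is cardinality-preserving when $\pi,\rho$ are, while the case $\mu>\kappa^+$ needs no extra assumption. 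It then suffices to find $A\in\SI_\mu$ with $\tau([X])=[X]$ for all $X\subseteq\lambda\sm A$, since that says exactly $\pi([X])=\rho([X])$ there.

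Next I would split the problem into one-sided halves, isolating the following claim: \emph{for any automorphism $\sigma$ satisfying the hypotheses with $\sigma_\mu=\id$, there is $A\in\SI_\mu$ with $\sigma([X])\le[X]$ for all $X\subseteq\lambda\sm A$.} Granting this, apply it to $\tau$ and to $\tau^{-1}$ (which also satisfies the hypotheses), obtaining sets $A_1,A_2\in\SI_\mu$; on $\lambda\sm(A_1\cup A_2)$ we have $\tau([X])\le[X]$, and from $\tau^{-1}([X])\le[X]$ we get $[X]\le\tau([X])$ by applying $\tau$, so $\tau([X])=[X]$ off a set in $\SI_\mu$, as required.

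For the one-sided claim I argue by contradiction: assume that for every $A\in\SI_\mu$ there is $X\subseteq\lambda\sm A$ with $\sigma([X])\not\le[X]$, i.e.\ $\card{\sigma^*(X)\sm X}\ge\kappa$. I build by recursion on $\xi<\mu$ pairwise disjoint sets $Y_\xi,e_\xi$, each of size $\kappa$. At stage $\xi$ put $A_\xi=\bigcup_{\eta<\xi}(Y_\eta\cup e_\eta)$ and $C_\xi=(\sigma^{-1})^*(A_\xi)$; partitioning $A_\xi$ into singletons (each sent by $\sigma^{-1}$ to a set of size $<\kappa$ since $[\{a\}]=0$) and invoking Lemma~\ref{lemma:additivity}, both $A_\xi$ and $C_\xi$ have size $<\mu$, so $A_\xi'=A_\xi\cup C_\xi\in\SI_\mu$. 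Using the failure hypothesis for $A_\xi'$ together with the ``moreover'' part of Lemma~\ref{lemma:equal-on-kappa}, choose $Y_\xi\subseteq\lambda\sm A_\xi'$ of size $\kappa$ with $\card{\sigma^*(Y_\xi)\sm Y_\xi}\ge\kappa$. The point of removing $C_\xi$ is that
\[
  \sigma([Y_\xi])\meet[A_\xi]=\sigma\bigl([Y_\xi]\meet\sigma^{-1}([A_\xi])\bigr)=\sigma([Y_\xi\cap C_\xi])=0,
\]
so $\card{\sigma^*(Y_\xi)\cap A_\xi}<\kappa$; hence $\sigma^*(Y_\xi)\sm(Y_\xi\cup A_\xi)$ still has size $\ge\kappa$ (here $\kappa$ regular is used), and I let $e_\xi$ be a $\kappa$-sized subset of it. This keeps all the $Y$'s and $e$'s pairwise disjoint, and the stagewise sets stay in $\SI_\mu$ because $\mu$ is regular.

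Finally, set $Z=\bigcup_{\xi<\mu}Y_\xi$. By construction $e_\xi\cap Z=\emptyset$, while $Y_\xi\subseteq Z$ gives $\sigma([Y_\xi])\le[\sigma^*(Z)]$, so $\card{\sigma^*(Y_\xi)\sm\sigma^*(Z)}<\kappa$ and thus $\card{e_\xi\cap\sigma^*(Z)}=\kappa$. The sets $e_\xi\cap\sigma^*(Z)\subseteq\sigma^*(Z)\sm Z$ are then $\mu$-many pairwise disjoint nonempty sets, so $\card{\sigma^*(Z)\sm Z}\ge\mu$, contradicting $\sigma_\mu=\id$ (which forces $\sigma^*(Z)\sim_\mu Z$). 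I expect the main obstacle to be exactly the step that makes each new error \emph{fresh}: a priori the error $\sigma^*(Y_\xi)\sm Y_\xi$ of size $\kappa$ could be swallowed by the previously accumulated set $A_\xi$ of size $<\mu$, and the inflation of $A_\xi$ by its $\sigma^{-1}$-image $C_\xi$ is what forces the new error to land outside $A_\xi$ so that the $e_\xi$ can be kept pairwise disjoint and survive into $\sigma^*(Z)\sm Z$.
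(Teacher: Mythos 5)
Your proof is correct and takes essentially the same route as the paper's: reduce to $\rho = \id$, assume the conclusion fails, and run a transfinite recursion of length $\mu$ (using the ``moreover'' clause of Lemma~\ref{lemma:equal-on-kappa} and the $\SI_\mu$-preservation from Lemma~\ref{lemma:additivity}) that accumulates pairwise disjoint $\kappa$-sized errors, kept fresh by closing the accumulated set under selector images and preimages, whose union witnesses $\card{\sigma^*(Z)\sm Z}\ge\mu$, contradicting $\pi_\mu = \rho_\mu$. The only differences are bookkeeping: you get one-sidedness by applying your claim to $\tau$ and $\tau^{-1}$ where the paper chooses between $X$ and $\lambda\sm(A\cup X)$, and you isolate explicit error sets $e_\xi$ where the paper counts errors inside the annuli $A_{\alpha+1}\sm A_\alpha$.
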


\begin{proof}
  By composing with $\rho^{-1}$, we may assume that $\rho = \id$.  Let $\pi^*$ be a bijective selector for $\pi$. Our
  assumptions on $\pi$ imply that $\pi$ is a permutation of $\SI_{\mu}$.  Suppose that the conclusion of the lemma
  fails, so that for every $A\in\SI_\mu$ there is some $X\subseteq\lambda$ disjoint from $A$ with $\pi([X])\neq [X]$.
  We will show that $\pi_\mu$ is not the identity.

  Fix some $A\in\SI_\mu$.  By our assumption, there is some $X$ disjoint from $A \cup \pi^{*}(A) \cup (\pi^{*})^{-1}(A)$
  with $\pi^*(X)\not\sim_{\kappa} X$.  By choosing between $X$ and $\lambda\sm (A\cup X)$, we may assume that
  $\card{\pi^*(X)\sm X} \ge \kappa$.  By Lemma~\ref{lemma:equal-on-kappa}, we may also assume that $X$ has cardinality
  $\kappa$.  Applying this observation repeatedly, we may construct sets $A_\alpha$ ($\alpha < \mu$) in $\SI_\mu$, and
  sets $X_\alpha$ ($\alpha < \mu$) of cardinality $\kappa$, such that
  \begin{itemize}
    \item  for all $\alpha < \mu$,
	  \begin{itemize}
			\item $X_\alpha\cap (A_\alpha \cup \pi^{*}(A) \cup (\pi^{*})^{-1}(A)) = \emptyset$,
			\item $\card{\pi^*(X_\alpha)\sm X_\alpha} \ge
      \kappa$,
			\item $A_\alpha\cup X_\alpha\cup \pi^*(A_\alpha)\cup (\pi^*)^{-1}(A_\alpha) \cup \pi^*(X_\alpha)\subseteq
      A_{\alpha+1}$,
		\end{itemize}
    \item  for all limit $\alpha < \mu$, $A_\alpha = \bigcup\set{A_\beta}{\beta < \alpha}$.
  \end{itemize}
  For each $\alpha < \mu$, $\card{\pi^*(X_{\alpha})\cap A_\alpha} < \kappa$, so $\card{\pi^*(X_\alpha)\sm
  (A_{\alpha+1}\sm A_\alpha)} < \kappa$.  Let $X = \bigcup\set{X_\alpha}{\alpha < \mu}$.  Since, for all $\alpha < \mu$,
  we have
  \begin{gather*}
    \card{(\pi^*(X_\alpha)\sm X_\alpha)\cap (A_{\alpha+1}\sm A_\alpha)} \ge \kappa \\
    \card{\pi^*(X_\alpha)\sm \pi^*(X)} < \kappa \\
    X \cap (A_{\alpha+1}\sm A_\alpha) = X_\alpha
  \end{gather*}
  it follows that $\card{(\pi^*(X)\sm X)\cap (A_{\alpha+1}\sm A_\alpha)} \ge \kappa$ for every $\alpha < \mu$.  Then
  $\card{\pi^*(X)\sm X}\ge \mu$.  This completes the proof.
\end{proof}

By applying Lemma~\ref{lemma:bootstrap} in the case where $\pi_\mu$ (recall Definition \ref{definition:pimu}) is trivial, we obtain the following
\begin{theorem}
  \label{thm:lifts}
  Suppose that $\kappa < \mu \le \lambda$ are infinite cardinals, with $\kappa$ and $\mu$ regular, and let $\pi$ be an
  automorphism of $\SP(\lambda)/\SI_\kappa$. Suppose that either $\pi$ is cardinality-preserving or $\mu > \kappa^{+}$.
  If $\pi_\mu$ is trivial, then $\pi$ is $\mu$-almost trivial.
\end{theorem}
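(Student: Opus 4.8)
The plan is to apply Lemma~\ref{lemma:bootstrap} with $\rho$ a genuinely trivial automorphism of $\SP(\lambda)/\SI_\kappa$ chosen so that $\rho_\mu = \pi_\mu$. Once such a $\rho$ is in hand, Lemma~\ref{lemma:bootstrap} produces a set $A\in\SI_\mu$ on which $\pi$ and $\rho$ agree, and since $\rho$ is trivial this immediately exhibits $\lambda\sm A$ as a member of $\ST(\pi)$, whence $\lambda\in\SA_\mu(\pi)$, i.e. $\pi$ is $\mu$-almost trivial.

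First I would unpack the hypothesis that $\pi_\mu$ is trivial. Since $\pi$ is cardinality-preserving or $\mu > \kappa^+$ (and $\kappa$ is regular), Lemma~\ref{lemma:additivity} guarantees that $\pi_\mu$ is a well-defined automorphism of $\SP(\lambda)/\SI_\mu$; applying Lemma~\ref{lemma:trivial->bijection} to it (with the cardinal $\mu$ in place of $\kappa$) yields sets $E,F\subseteq\lambda$ with $\lambda\sm E,\lambda\sm F\in\SI_\mu$ and a function $g$ such that $g\restriction F : F\to E$ is a bijection and $\pi_\mu([A]_\mu) = [g^{-1}(A)]_\mu$ for all $A$.

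The main construction is to upgrade $g\restriction F$ to an honest bijection $h$ of $\lambda$ that still agrees with $g$ modulo $\SI_\mu$. Fix an infinite cardinal $c<\mu$ with $c\ge\max(\card{\lambda\sm E},\card{\lambda\sm F})$; this is possible since both complements have cardinality $<\mu$. Because $\card{F}=\card{E}=\lambda\ge\mu>c$, I can delete from $F$ a subset of size $c$ together with its $g$-image in $E$, arranging that the two resulting complements both have cardinality exactly $c<\mu$; patching in any bijection between these complements extends $g\restriction F$ to a bijection $h:\lambda\to\lambda$ with $\set{y}{h(y)\neq g(y)}\in\SI_\mu$. Setting $\rho([A]_\kappa) = [h^{-1}(A)]_\kappa$ then defines a trivial automorphism of $\SP(\lambda)/\SI_\kappa$ (its inverse is induced by $h^{-1}$, cf.\ Lemma~\ref{lemma:trivial->bijection}), and because $h$ is a genuine bijection $\rho$ is automatically cardinality-preserving, so $\rho_\mu$ is well-defined. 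Since $h$ and $g$ agree off a set in $\SI_\mu$, for every $A$ we have $h^{-1}(A)\sim_\mu g^{-1}(A)$, hence $\rho_\mu([A]_\mu) = [h^{-1}(A)]_\mu = [g^{-1}(A)]_\mu = \pi_\mu([A]_\mu)$; that is, $\rho_\mu = \pi_\mu$.

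It then remains to invoke Lemma~\ref{lemma:bootstrap}. Its hypotheses hold: $\kappa<\mu\le\lambda$ are regular, and either $\mu>\kappa^+$, or else (by the theorem's hypothesis) $\pi$ is cardinality-preserving, as is $\rho$. Thus there is some $A\in\SI_\mu$ with $\pi([X]) = \rho([X])$ for every $X\subseteq\lambda\sm A$. Since $\rho([X]) = [h^{-1}(X)]$, the function $h$, restricted to the co-$\SI_\mu$ set $h^{-1}(\lambda\sm A)$ which it maps bijectively onto $\lambda\sm A$, witnesses that $\pi$ is trivial on $\lambda\sm A$; so $\lambda\sm A\in\ST(\pi)$, and as $A\in\SI_\mu$ we conclude $\lambda\in\SA_\mu(\pi)$. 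I expect the one genuinely fiddly point to be the cardinality bookkeeping that turns $g\restriction F$ into a total bijection $h$ agreeing with $g$ modulo $\SI_\mu$ while keeping $\rho$ an automorphism of the $\SI_\kappa$-quotient; the rest amounts to verifying that the hypotheses of Lemma~\ref{lemma:bootstrap} are met in both cases.
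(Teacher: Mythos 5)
Your proposal is correct and follows exactly the route the paper intends: the paper proves Theorem~\ref{thm:lifts} as a one-line application of Lemma~\ref{lemma:bootstrap}, taking for granted precisely the details you supply --- upgrading the witness for the triviality of $\pi_\mu$ to a genuine bijection $h$ of $\lambda$ (so that $\rho([A]) = [h^{-1}(A)]$ is a well-defined, automatically cardinality-preserving automorphism of $\SP(\lambda)/\SI_\kappa$ with $\rho_\mu = \pi_\mu$), and then reading off $\lambda\sm A\in\ST(\pi)$ from the agreement of $\pi$ and $\rho$ below $\lambda\sm A$. Your cardinality bookkeeping for constructing $h$, and your observation that $\rho$'s cardinality-preservation is what makes Lemma~\ref{lemma:bootstrap} applicable in both cases of the hypothesis, are both sound.
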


Theorem \ref{thm:trivial-kappa+} is one of the main results of the paper.  The strategy used in its proof is reused in
Section \ref{sec:reals}.

\begin{theorem}
  \label{thm:trivial-kappa+}
  Suppose that $\pi$ is an automorphism of $\SP(2^\kappa)/\SI_{\kappa^+}$, and that $\pi$ is trivial on
  every set of cardinality $\kappa^+$.  Then $\pi$ is trivial.
\end{theorem}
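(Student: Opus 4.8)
The plan is to assemble the local trivializing maps supplied by the hypothesis into a single global witness, by induction on the cardinality of the set on which triviality is to be established. Write $\lambda = 2^\kappa$ and work throughout in $\SP(\lambda)/\SI_{\kappa^+}$, whose underlying ideal is $<\kappa^+$-complete. For each $A\subseteq\lambda$ with $\card{A}=\kappa^+$ fix, using the hypothesis, a selector value $B_A$ with $\pi([A])=[B_A]$ and, via the relativized form of Lemma~\ref{lemma:trivial->bijection}, an injection $g_A$ from a subset of $A$ with complement in $\SI_{\kappa^+}$ onto such a subset of $B_A$, witnessing that $\pi$ is trivial on $A$; in particular $\card{B_A}=\kappa^+$, so $\pi$ carries sets of size $\kappa^+$ to sets of size $\kappa^+$.

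The first step is coherence. I would show that whenever $A,A'$ have cardinality $\kappa^+$ and $\card{A\cap A'}=\kappa^+$, the maps $g_A$ and $g_{A'}$ agree off a set in $\SI_{\kappa^+}$. For every $C\subseteq A\cap A'$ we have $[g_A[C]]=\pi([C])=[g_{A'}[C]]$, hence $\card{g_A[C]\sd g_{A'}[C]}\le\kappa$; were $\set{x\in A\cap A'}{g_A(x)\neq g_{A'}(x)}$ of size $\kappa^+$, a coloring argument on the pairs $(g_A(x),g_{A'}(x))$ -- splitting off either a vertex of degree $\kappa^+$ or a matching of size $\kappa^+$, exactly as in the proof of Lemma~\ref{lemma:trivial->bijection} -- would produce $C$ with $\card{g_A[C]\sd g_{A'}[C]}\ge\kappa^+$, a contradiction. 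This yields a totally coherent family $\set{g_A}{\card A=\kappa^+}$.

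The second step is the induction: I would prove by induction on cardinals $\nu$ with $\kappa^+\le\nu\le\lambda$ that every $A$ with $\card A=\nu$ lies in $\ST(\pi)$; applying this to $A=\lambda$ gives the theorem. The base case $\nu=\kappa^+$ is the hypothesis. For $\nu>\kappa^+$, write a given $A$ of size $\nu$ as an increasing continuous union $A=\bigcup_{\xi<\cf\nu}A_\xi$ with $\card{A_\xi}<\nu$; by the induction hypothesis each $A_\xi\in\ST(\pi)$, with witness $g_\xi$, and the coherence argument applied to the large overlaps $A_\xi\subseteq A_{\xi'}$ shows that $g_\xi$ and $g_{\xi'}$ agree off a set in $\SI_{\kappa^+}$. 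When $\cf\nu\le\kappa$ the gluing is immediate: set $g=\bigcup_\xi g_\xi$ (thinning to resolve the $\le\kappa$ disagreements) and verify $\pi([C])=[g[C]]$ for $C\subseteq A$ using the $<\kappa^+$-completeness of $\SP(\lambda)/\SI_{\kappa^+}$ (the displayed identity of Lemma~\ref{lemma:additivity} with $\mu=\kappa^+$) together with $\pi([A_\xi])=[g_\xi[A_\xi]]$.

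The main obstacle is the case $\cf\nu\ge\kappa^+$, where the chain has length $\ge\kappa^+$ while each disagreement has size $\le\kappa$, so a naive union lets the errors accumulate to size $\cf\nu$ and the completeness identity of Lemma~\ref{lemma:additivity} is unavailable (it needs fewer than $\kappa^+$ pieces). To control this I would pass to a continuous chain of elementary submodels $\seq{N_\xi}{\xi<\cf\nu}$ of some $(H_\theta,\in)$ (with a fixed well-ordering) containing $\pi$ and the family $\set{g_A}{\card A=\kappa^+}$, put $A_\xi=N_\xi\cap A$, and set $g(x)=g_\xi(x)$ for the least $\xi$ with $x\in A_\xi$. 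Using a pressing-down argument on the regular part of $\cf\nu$ together with the total coherence of the family, I would show that $\set{x\in A_\eta}{g(x)\neq g_\eta(x)}\in\SI_{\kappa^+}$ for club-many $\eta$; this is precisely where the abundance of witnesses -- on every set of size $\kappa^+$, not merely along one chain -- is used to preclude a nontrivial coherent family and force uniformization in ZFC. Granting this, $g$ is a well-defined injection on a subset of $A$ with complement in $\SI_{\kappa^+}$, and the final check that $\pi([C])=[g[C]]$ for arbitrary $C\subseteq A$ reduces, via the coloring argument, to size-$\kappa^+$ subsets $C'\subseteq C$, on which $g$ agrees with $g_{C'}$ off a set in $\SI_{\kappa^+}$. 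The stabilization in this last case is the technical heart of the argument; the remainder is bookkeeping with the coloring lemma and Lemma~\ref{lemma:trivial->bijection}.
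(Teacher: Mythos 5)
There is a genuine gap, and it sits exactly where you place the ``technical heart.'' Your first step (coherence of the family $\set{g_A}{\card{A}=\kappa^+}$) is fine, and your gluing for $\cf\nu\le\kappa$ is really just Lemma~\ref{lemma:closure} for $\SI_{\kappa^+}$. But in the case $\cf\nu\ge\kappa^+$ you have not given an argument: defining $g(x)=g_\xi(x)$ for the least $\xi$ with $x\in N_\xi\cap A$ lets the pairwise disagreements (each of size $\le\kappa$, but $\ge\kappa^+$ many of them below a given $\eta$) accumulate to a set of size $\ge\kappa^+$ inside $A_\eta$, and nothing in the pressing-down sketch rules this out. The slogan that ``the abundance of witnesses \dots force[s] uniformization in ZFC'' is not a principle you can invoke: totally coherent, nontrivializable families exist outright in ZFC (e.g.\ a coherent sequence of injections $e_\alpha:\alpha\to\omega$, $\alpha<\omega_1$, coherent mod finite but with no single mod-finite uniformization), so trivialization must come from the automorphism structure, and you never extract it. The decisive symptom is that your induction on $\nu$ up to $\lambda$ makes no use whatsoever of the hypothesis $\lambda=2^\kappa$; if it worked, it would prove the conclusion for every $\lambda>\kappa$, which is precisely open question (e) of this paper (the authors note only that Theorem~\ref{thm:trivial-kappa+} forces a counterexample to live above $2^\kappa$). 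So the missing stabilization step cannot be a routine pressing-down argument.

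The paper's proof uses $\lambda=2^\kappa$ essentially, and avoids any gluing along chains. Enumerate $\SP(\kappa)$ as $\seq{x_\alpha}{\alpha<2^\kappa}$, so each point of $\lambda$ codes a subset of $\kappa$; for $\beta<\kappa$ let $R_\beta=\set{\alpha}{\beta\in x_\alpha}$ and $T_\beta=(\pi^*)^{-1}(R_\beta)$, and decode: $y_\gamma=\set{\beta<\kappa}{\gamma\in T_\beta}$, with $h(\gamma)$ the index of $y_\gamma$ in the enumeration. This defines a \emph{global} candidate map $h$ from the action of $\pi$ on only $\kappa$ many sets. The local witnesses $f_A$ ($A\in[2^\kappa]^{\kappa^+}$) are then used once each: since $f_A^{-1}[R_\beta]\sim_{\kappa^+}T_\beta\cap A$ for each $\beta$, the set $H_A$ of $\gamma\in A$ where $h$ and $f_A$ decode identically for all $\kappa$ many $\beta$ satisfies $H_A\sim_{\kappa^+}A$ (a union of $\kappa$ many errors of size $\le\kappa$ is small --- no long chains ever appear), so $h\rs A\sim_{\kappa^+}f_A$. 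Hence $h$ is almost injective, induces a trivial automorphism $\rho$ agreeing with $\pi$ on all sets of size $\le\kappa^+$, and $\pi=\rho$ by Lemma~\ref{lemma:equal-on-kappa}. Your coherence observations are correct but cannot substitute for this coding step.
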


\begin{proof}
  Let $\pi^*$ be a bijective selector for $\pi$, and for each $A\subseteq 2^\kappa$ of cardinality $\kappa^+$, choose a
  function $f_A : A\to \pi^*(A)$ such that for all $B\subseteq A$, $\pi^*(B) \sim_{\kappa^+} f_A''(B)$.  By Lemma
  \ref{lemma:trivial->bijection}, each $f_{A}$ restricts to a bijection between subsets of $A$ and $\pi^*(A)$ whose
  complements (in $A$ and $\pi^*(A)$ respectively) have cardinality at most $\kappa$, and moreover, for every
  $B\subseteq\pi^*(A)$, $\pi^{-1}([B]) = [f_A^{-1}(B)]$.

  Let $\seq{x_\alpha}{\alpha < 2^\kappa}$ be an enumeration of $\SP(\kappa)$. For each $\beta < \kappa$, let
  $R_{\beta} = \set{ \alpha < 2^{\kappa} }{\beta \in x_{\alpha}}$ and let $T_{\beta} = (\pi^{*})^{-1}(R_{\beta})$. For
  each $\gamma < 2^{\kappa}$, let $y_{\gamma} = \set{ \beta < \kappa }{ \gamma \in T_{\beta}}$. Let $h \colon
  2^{\kappa} \to 2^{\kappa}$ be such that for all $\gamma, \alpha < 2^{\kappa}$, if $y_{\gamma} = x_{\alpha}$, then
  $h(\gamma) = \alpha$.\footnote{While it is not important for the current proof, we note (without any triviality
  condition on $\pi$) that $h''(A) \sim_{\kappa^{+}} \pi^{*}(A)$ for every $A$ in the smallest $\kappa$-complete
  subalgebra of $\SP(2^{\kappa})$ containing $\SI_{\kappa^{+}}$ and the sets $T_{\beta}$ $(\beta < \kappa)$.}


  For each $\beta < \kappa$, and $A\in [2^\kappa]^{\kappa^+}$, let $G_{A, \beta}$ be the set of $\gamma \in A$
  for which $\gamma \in T_\beta$ if and only if $f_{A}(\gamma) \in R_{\beta}$.  Since for each such $\beta$ and $A$, we
  have
  \[
    f^{-1}_{A}[R_{\beta}] \sim_{\kappa^{+}} (\pi^{*})^{-1}(R_{\beta} \cap \pi^{*}(A)) \sim_{\kappa^+} (T_{\beta} \cap A),
  \]
  it follows that $G_{A,\beta} \sim_{\kappa^+} A$.  Then, for each $A\in [2^\kappa]^{\kappa^+}$,
  \[
    H_A = \bigcap\set{G_{A,\beta}}{\beta < \kappa} \sim_{\kappa^+} A.
  \]
  For each $\gamma\in H_A$ and $\beta < \kappa$, we have
  \[
    \beta\in y_\gamma \iff \gamma\in T_\beta \iff f_A(\gamma)\in R_\beta \iff \beta\in x_{f_A(\gamma)}.
  \]
  Then, for each $\gamma\in H_A$, $y_\gamma = x_{f_A(\gamma)}$, so $h(\gamma) = f_A(\gamma)$.  Since
  $H_A\sim_{\kappa^+} A$, we then have $h\rs A \sim_{\kappa^+} f_A$.


  It follows from this that \[\card{\set{ \alpha < 2^{\kappa} }{ \card{h^{-1}[\{\alpha\}]} \neq 1}} \le \kappa,\] so for
  some $B,C \in [2^{\kappa}]^{\leq \kappa}$, $h \restriction (2^{\kappa} \setminus B)$ is a bijection between
  $(2^{\kappa} \setminus B)$ and $(2^{\kappa} \setminus C)$.  Thus, the map $\rho([A]) = [h''(A)]$ defines a trivial
  automorphism of $\SP(2^\kappa)/\SI_{\kappa^+}$.  By the above, we have $\pi([A]) = \rho([A])$ for all $A\subseteq
  2^\kappa$ with cardinality $\le \kappa^+$; hence, by Lemma~\ref{lemma:equal-on-kappa}, $\pi = \rho$.

\end{proof}

\begin{remark}
  Theorem \ref{thm:trivial-kappa+} contradicts the remark at the end of \cite{Velickovic.OCAA} which claims that
  MA$_{\aleph_{1}}$ + OCA (which implies that $2^{\aleph_{0}} \ge \aleph_{2}$, and that all automorphisms of
  $\SP(\omega_{1})/\Fin$ are trivial) does not imply that all automorphisms of $\SP(\omega_{2})/\Fin$ are trivial.
  Combining Theorem \ref{thm:trivial-kappa+} with the main result of \cite{Shelah-Steprans.2}, one gets that if all
  automorphisms of $\SP(\omega_{1})/\Fin$ are trivial, then all automorphisms of $\SP(\lambda)/\Fin$ are trivial, for all
  $\lambda$ below the least strongly inaccessible cardinal.
\end{remark}

\begin{remark}
  Theorems \ref{thm:lifts} and \ref{thm:trivial-kappa+} show that if $\mu < \kappa$ are infinite cardinals and $\pi$ is
  an automorphism of $\SP(2^{\kappa})/\SI_{\mu}$ which is trivial on all sets of cardinality $\kappa^{+}$, then $\pi$ is
  trivial.
\end{remark}

We finish this section with facts about $\ST(\pi)$ which will be used in Section \ref{sec:reals}.

\begin{lemma}
  \label{lemma:closure}
  Suppose that $\kappa \leq \lambda$ are infinite cardinals, and that $\pi$ is an automorphism of
	$\SP(\lambda)/\SI_\kappa$.
  Then $\ST(\pi)$ is closed under unions of cardinality less than $\cf\kappa$.
\end{lemma}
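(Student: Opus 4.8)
The plan is to build a single witness for the triviality of $\pi$ on the union out of the witnesses for the individual pieces, controlling the accumulated error with the $<\cf\kappa$-completeness of $\SP(\lambda)/\SI_\kappa$. Let $\langle A_i \mid i < \theta\rangle$ enumerate the given members of $\ST(\pi)$, where $\theta < \cf\kappa$, and let $A = \bigcup_{i<\theta} A_i$. First I would reduce to the disjoint case: replacing each $A_i$ by $A_i \sm \bigcup_{j<i}A_j$, which lies in $\ST(\pi)$ since $\ST(\pi)$ is downward closed (being an ideal), I may assume the $A_i$ are pairwise disjoint without changing $A$.

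Fix a selector $\pi^*$ for $\pi$. For each $i$, applying Lemma~\ref{lemma:trivial->bijection} to the restriction of $\pi$ to the sets below $[A_i]$, I fix a witness $f_i$ for the triviality of $\pi$ on $A_i$: a function with $\ran f_i \subseteq A_i$ and $\pi([B]) = [f_i^{-1}(B)]$ for all $B\subseteq A_i$. Taking $B = A_i$ gives $\dom f_i = f_i^{-1}(A_i) \sim_\kappa \pi^*(A_i)$. Because the $A_i$ are pairwise disjoint, $\pi([A_i])\meet\pi([A_j]) = \pi([A_i\cap A_j]) = 0$ for $i\neq j$, so the sets $\pi^*(A_i)$, and hence the domains $\dom f_i$, are pairwise $\sim_\kappa$-almost disjoint. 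I then disjointify the domains: set $D_i = \dom f_i \sm \bigcup_{j<i}\dom f_j$. For each fixed $i$, the set $\dom f_i \sm D_i = \bigcup_{j<i}(\dom f_i \cap \dom f_j)$ is a union of fewer than $\cf\kappa$ sets each of cardinality $<\kappa$, hence has cardinality $<\kappa$, so $D_i \sim_\kappa \dom f_i \sim_\kappa \pi^*(A_i)$. Let $f = \bigcup_{i<\theta}(f_i\rs D_i)$; since the $D_i$ are pairwise disjoint this is a well-defined function with $\ran f \subseteq A$.

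It remains to verify that $f$ witnesses triviality of $\pi$ on $A$, i.e.\ that $\pi([B]) = [f^{-1}(B)]$ for every $B\subseteq A$. Writing $B_i = B\cap A_i$, the family $\langle B_i \mid i<\theta\rangle$ has size $<\cf\kappa$, so by the $<\cf\kappa$-completeness of $\SP(\lambda)/\SI_\kappa$ we have $[B] = \bigvee_i[B_i]$, and applying the automorphism $\pi$ gives $\pi([B]) = \bigvee_i\pi([B_i]) = \bigvee_i[f_i^{-1}(B_i)] = \bigl[\bigcup_i f_i^{-1}(B_i)\bigr]$. On the other hand, since $\ran(f_i\rs D_i)\subseteq A_i$ and the $A_i$ are disjoint, a point of $D_i$ is sent into $B$ exactly when it is sent into $B_i$, so $f^{-1}(B) = \bigcup_i\bigl(f_i^{-1}(B_i)\cap D_i\bigr)$. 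These two sets differ only within $\bigcup_i(\dom f_i \sm D_i)$, again a union of fewer than $\cf\kappa$ sets of cardinality $<\kappa$ and hence of cardinality $<\kappa$; therefore $f^{-1}(B)\sim_\kappa \bigcup_i f_i^{-1}(B_i)$ and $[f^{-1}(B)] = \pi([B])$, as required. Thus $A\in\ST(\pi)$.

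The main obstacle is purely the bookkeeping of error terms, and it is where the hypothesis is essential: the argument works only because a union of fewer than $\cf\kappa$ sets of cardinality $<\kappa$ again has cardinality $<\kappa$. This fact is invoked twice — once to make the domains $D_i$ genuinely disjoint at a cost of $\SI_\kappa$-small sets, and once to pass between $f^{-1}(B)$ and $\bigcup_i f_i^{-1}(B_i)$ — together with the dual use of $<\cf\kappa$-completeness to split $[B]$ as $\bigvee_i[B_i]$ and commute this join past $\pi$. If $\theta$ were allowed to reach $\cf\kappa$, each of these steps could fail, which is exactly why the closure is stated only for unions of cardinality less than $\cf\kappa$.
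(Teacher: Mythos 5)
Your proof is correct and follows essentially the same route as the paper's: disjointify the given sets, glue the individual triviality witnesses obtained from Lemma~\ref{lemma:trivial->bijection}, and verify the union using the $<\cf\kappa$-completeness of $\SP(\lambda)/\SI_\kappa$ together with the fact that a union of fewer than $\cf\kappa$ sets of cardinality $<\kappa$ still has cardinality $<\kappa$. The only (cosmetic) difference is that the paper orients the witnesses as injections $f_\delta \colon A_\delta \to \lambda$ acting by forward images, so their domains are literally disjoint and your extra disjointification step (passing from $\dom f_i$ to $D_i$) is not needed.
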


\begin{proof}
  Fix a cardinal $\gamma < \cf\kappa$, and let $A_\delta$ ($\delta < \gamma$) be sets in $\ST(\pi)$.  We may assume that
  each $A_\delta$ has cardinality $\ge \kappa$, and that the $A_\delta$'s are pairwise disjoint.  Applying Lemma
  \ref{lemma:trivial->bijection} (and possibly removing a set of cardinality less than $\kappa$ from each $A_{\delta}$)
  let $f_\delta : A_\delta\to \lambda$ ($\delta < \gamma$) be injections such that for all $\delta < \gamma$ and
  $X\subseteq A_\delta$, $\pi([X]) = [f_\delta''(X)]$.  Put $f = \bigcup\set{f_\delta}{\delta < \gamma}$ and $A =
  \bigcup\set{A_\delta}{\delta < \gamma}$.  Fix a selector $\pi^*$ of $\pi$, and let $X\subseteq A$.  Since
  $\SP(\lambda)/\SI_\kappa$ is $<\cf\kappa$-complete and $\pi$ is an automorphism, it follows that
  \[
    \pi^*(X) \sim_{\kappa} \bigcup\set{\pi^*(X\cap A_\delta)}{\delta < \gamma}.
  \]
  Note also that
  \[
    f''(X) = \bigcup\set{f''(X\cap A_\delta)}{\delta < \gamma}.
  \]
  Since $\pi^*(X\cap A_\delta) \sim_{\kappa} f_\delta''(X\cap A_\delta)$ for every $\delta < \gamma$, $\pi^*(X)
  \sim_{\kappa} f''(X)$.  This shows that $A\in\ST(\pi)$, as required.
\end{proof}

Theorem \ref{thm:lifts}  and Lemma \ref{lemma:closure} give the following.

\begin{lemma}
  \label{lemma:closure2}
  Suppose that $\kappa < \lambda$ are infinite cardinals, and that $\pi$ is a cardinality-preserving automorphism of
  $\SP(\lambda)/\SI_\kappa$.  Then $\SA_{\kappa^{+}}(\pi)$ is closed under unions of cardinality $\kappa$.
\end{lemma}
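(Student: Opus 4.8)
The plan is to unwind the definition of $\SA_{\kappa^+}(\pi)$, reduce to a union of $\kappa$ many sets in $\ST(\pi)$, pass to the quotient by $\SI_{\kappa^+}$ to see that $\pi_{\kappa^+}$ is trivial on that union, and then run the argument of Lemma~\ref{lemma:bootstrap} relativized to the union in order to descend back to $\SA_{\kappa^+}(\pi)$. First, suppose $A_\alpha\in\SA_{\kappa^+}(\pi)$ for $\alpha<\kappa$; by definition each $A_\alpha$ differs from some $T_\alpha\in\ST(\pi)$ by a set of cardinality $\le\kappa$. Since $\bigcup_{\alpha<\kappa}(A_\alpha\sm T_\alpha)$ has cardinality $\le\kappa$ and $\SA_{\kappa^+}(\pi)$ is an ideal containing $\SI_{\kappa^+}$, it suffices to show that $T:=\bigcup_{\alpha<\kappa}T_\alpha$ lies in $\SA_{\kappa^+}(\pi)$.

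Second, since $\pi$ is cardinality-preserving, Lemma~\ref{lemma:additivity} provides the automorphism $\pi_{\kappa^+}$ of $\SP(\lambda)/\SI_{\kappa^+}$. Any witness to $\pi$ being trivial on a set also witnesses $\pi_{\kappa^+}$ being trivial on that set, so $\ST(\pi)\subseteq\ST(\pi_{\kappa^+})$; in particular each $T_\alpha\in\ST(\pi_{\kappa^+})$. I would then apply Lemma~\ref{lemma:closure} to the automorphism $\pi_{\kappa^+}$ of $\SP(\lambda)/\SI_{\kappa^+}$, with the role of $\kappa$ there played by $\kappa^+$ (legitimate since $\kappa^+\le\lambda$): as $\cf(\kappa^+)=\kappa^+$, the ideal $\ST(\pi_{\kappa^+})$ is closed under unions of cardinality $\le\kappa$, whence $T\in\ST(\pi_{\kappa^+})$. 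Fixing a bijective selector $\pi^{*}$ for $\pi$ and applying Lemma~\ref{lemma:trivial->bijection} inside $\SP(\lambda)/\SI_{\kappa^+}$, we obtain a map $f$ (injective off a set of size $\le\kappa$) with $\pi^{*}(X)\sim_{\kappa^+}f''(X)$ for all $X\subseteq T$.

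It remains to deduce $T\in\SA_{\kappa^+}(\pi)$ from the fact that $\pi_{\kappa^+}$ is trivial on $T$; this is the local form of Theorem~\ref{thm:lifts} and is the main point. I would show that $f$ in fact witnesses that $\pi$ is trivial on $T\sm E$ for some $E$ of cardinality $\le\kappa$, which gives $T\sm E\in\ST(\pi)$ and hence $T=(T\sm E)\cup E\in\SA_{\kappa^+}(\pi)$. Suppose not: then for every $E\in[T]^{\le\kappa}$ there is $X\subseteq T\sm E$ with $\pi^{*}(X)\not\sim_\kappa f''(X)$, and after shrinking (as in Lemma~\ref{lemma:equal-on-kappa}) we may take $\card{X}=\kappa$. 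Running the transfinite construction in the proof of Lemma~\ref{lemma:bootstrap} verbatim, but with the identity map there replaced by the partial map $f$ and with all of the sets $A_\xi$, $X_\xi$ ($\xi<\kappa^+$) kept inside $T$, produces $X=\bigcup_{\xi<\kappa^+}X_\xi\subseteq T$ with $\card{(\pi^{*}(X)\sd f''(X))\cap(A_{\xi+1}\sm A_\xi)}\ge\kappa$ for every $\xi$, and therefore $\card{\pi^{*}(X)\sd f''(X)}\ge\kappa^+$, contradicting $\pi^{*}(X)\sim_{\kappa^+}f''(X)$. The obstacle is precisely this last step: Theorem~\ref{thm:lifts} cannot be quoted as a black box, because $f$ is defined only on $T$ and does not extend to a global trivial automorphism agreeing with $\pi_{\kappa^+}$, so its proof (via Lemma~\ref{lemma:bootstrap}) must be re-executed inside $T$. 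I note that this relativized argument uses only that $\kappa^+$ is regular together with cardinality-preservation, and in particular makes no regularity assumption on $\kappa$.
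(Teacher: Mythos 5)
Your argument is correct, and its first two stages --- splitting each set in $\SA_{\kappa^{+}}(\pi)$ into a piece of $\ST(\pi)$ plus a set of cardinality at most $\kappa$, passing to $\pi_{\kappa^{+}}$ via Lemma~\ref{lemma:additivity}, and applying Lemma~\ref{lemma:closure} at level $\kappa^{+}$ to get $T\in\ST(\pi_{\kappa^{+}})$ --- are exactly the paper's route: the paper's entire proof is the one-line citation of Theorem~\ref{thm:lifts} and Lemma~\ref{lemma:closure}. You diverge at the descent, asserting that Theorem~\ref{thm:lifts} cannot be quoted as a black box because $f$ is defined only on $T$. That is overstated: $\pi$ restricts to an isomorphism from $\SP(T)/\SI_{\kappa}$ onto $\SP(\pi^{*}(T))/\SI_{\kappa}$, and cardinality-preservation gives $\card{T}=\card{\pi^{*}(T)}$, so composing with the map induced by a bijection $\pi^{*}(T)\to T$ yields a cardinality-preserving automorphism of $\SP(T)/\SI_{\kappa}$ whose $\kappa^{+}$-lift is trivial (witnessed by your $f$); Theorem~\ref{thm:lifts}, applied with $\card{T}$ in place of $\lambda$ (the case $\card{T}\le\kappa$ being trivial), then gives $\kappa^{+}$-almost triviality on $T$, which transports back through the bijection. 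Your substitute --- re-running Lemma~\ref{lemma:bootstrap} inside $T$ with $f$ replacing the identity --- is nevertheless valid, provided you make explicit what ``verbatim'' hides: since the triviality of $\pi_{\kappa^{+}}$ on $T$ controls things only modulo $\SI_{\kappa^{+}}$, the non-injectivity set of $f$ and the set $f''(T)\sd\pi^{*}(T)$ can each have size exactly $\kappa$, so they (together with their images and preimages under $f$ and $\pi^{*}$) must be absorbed into $A_{0}$, and each $X_{\xi}$ must additionally avoid $f^{-1}(A_{\xi})$ and $f''(A_{\xi})$, before the stagewise mod-$\SI_{\kappa}$ computations and the choosing-between-$X$-and-its-relative-complement step go through. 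Your route does buy something real: a literal quotation of Theorem~\ref{thm:lifts} imports the hypothesis that $\kappa$ is regular (inherited from Lemma~\ref{lemma:bootstrap}), which Lemma~\ref{lemma:closure2} does not assume, whereas your relativized argument, as you note, uses only the regularity of $\kappa^{+}$ together with cardinality-preservation and hence covers singular $\kappa$ without any patch.
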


\section{Automorphisms of $\SP(\RR)/\Fin$}
\label{sec:reals}
	
In this section we define a cardinal characteristic of the continuum - the Cofinal Selection Number  - and use it to
show that a certain fragment of MA$_{\aleph_{1}}$ (a consequence of $\cov(\SM) > \aleph_{1}$, where $\cov(\SM)$ denotes
the covering number for the ideal of meager sets) implies that any automorphism of $\SP(\RR)/\Fin$ which is trivial on
all countable sets is trivial. By Theorem \ref{thm:trivial-kappa+}, it suffices to prove this  result with $\omega_{1}$
in place of $\RR$; as this makes no essential difference in the proof, we work with $\RR$. Veli\v ckovi\'c has shown
\cite{Velickovic.OCAA} that MA$_{\aleph_{1}}$ implies that any automorphism of $\SP(\omega_{1})/\Fin$ which is trivial
on all countable sets is trivial. His fragment of MA$_{\aleph_{1}}$ is different, corresponding roughly to adding
$\aleph_{1}$ many Cohen reals and then specializing an Aronszajn tree.

\begin{definition}
  Given $\Gamma\subseteq\SP(2^\omega)$, we let $\CSN(\Gamma)$ be the smallest cardinality of a family
  $\SF\subseteq (2^\omega)^\omega\times (2^\omega)^\omega$ such that
  \begin{enumerate}
    \item  for every $(f,g)\in\SF$, $\set{f(n)}{n < \omega}\cup \set{g(n)}{n < \omega}$ is dense in $2^\omega$,

    \item\label{cond:disjoint} for all pairs $(f,g), (f',g')$ from $\SF$, if $g \neq g'$, then
    \[
      \set{g(n)}{n < \omega}\cap \set{g'(n)}{n < \omega} = \emptyset,
    \]
    \item  for every $(f,g)\in\SF$ and $n < \omega$, $f(n) \neq g(n)$, and
    \item  for every set $A\in\Gamma$, the set
    \[
      \set{(f,g)\in\SF}{\exists^\infty n < \omega\;\; \card{A\cap\{f(n),g(n)\}} = 1}
    \]
    has cardinality smaller than that of $\SF$,
	\end{enumerate}
  if such a family $\SF$ exists. If no such family exists, we set $\CSN(\Gamma) = (2^{\aleph_{0}})^{+}$.
\end{definition}

It is not hard to see that $\CSN(\SP(2^{\omega})) = (2^{\aleph_{0}})^{+}$ (condition (\ref{cond:disjoint}) was included
to make this the case).

Consider the poset $\QQ$ with conditions $(\sigma,s)$, where $\sigma\in 2^{<\omega}$ and $s$ is a function mapping
into $2$, with domain $\set{(\sigma\rs n)^\frown \langle 1 - \sigma(n)\rangle}{n < \dom{\sigma}}$.  We define
$(\sigma,s) \le (\tau,t) \iff \sigma\supseteq\tau \land s\supseteq t$.  It is easy to see that $\QQ$ is isomorphic
to a dense subset of $\CC\times\CC$, where $\CC$ is Cohen forcing.  Given $p = (\sigma,s)\in \QQ$ we define
\[
  U_p = \bigcup \set{N_\tau}{\tau\in\dom{s}\land s(\tau) = 1}
\]
and
\[
  V_p = \bigcup \set{N_\tau}{\tau\in\dom{s}\land s(\tau) = 0}
\]
and, given $G\subseteq\QQ$, we set \[U_G = \bigcup\set{U_p}{p\in G}\] and \[V_G = \bigcup\set{V_p}{p\in G}.\]

\begin{lemma}
  \label{lemma:cohen}
  Let $X = \set{x_n}{n < \omega}$ and $Y = \set{y_n}{n < \omega}$ be subsets of $2^\omega$ such that $X\cup Y$ is dense
  and $x_n \neq y_n$ for all $n < \omega$.  Then if $G$ is $\QQ$-generic, there are infinitely many $n < \omega$ such
  that $U_G$ contains exactly one of $x_n,y_n$.
\end{lemma}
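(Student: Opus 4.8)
The plan is to run a standard genericity argument. For each $N < \omega$ I will show that
\[
  D_N = \set{q\in\QQ}{\exists\,n\ge N\ \text{with $q$ deciding $x_n\in U_G$ and $y_n\in U_G$ oppositely}}
\]
is dense in $\QQ$; here ``deciding oppositely'' means that $q$ forces exactly one of $x_n,y_n$ into $U_G$. Once density is established, a generic $G$ meets every $D_N$, which produces for each $N$ some $n\ge N$ with exactly one of $x_n,y_n$ in $U_G$, and hence infinitely many such $n$.

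First I would record the mechanism by which a condition decides membership in $U_G$. Writing $\sigma_G = \bigcup\set{\sigma}{(\sigma,s)\in G}$ for the generic branch and $s_G$ for the union of the second coordinates, observe that any $z\neq\sigma_G$ agrees with $\sigma_G$ up to some first level $k$ and then differs, so that the node $(\sigma_G\rs k)\cat\langle 1-\sigma_G(k)\rangle$ lies in $\dom s_G$; then $z\in U_G$ holds iff $s_G$ takes value $1$ at that node and $z\in V_G$ iff it takes value $0$. The key observation is that whenever $z$ already differs from a finite stem $\tau$ within $\dom\tau$ (so its branch-off node already lies in the domain of the matching $t$), the condition $q=(\tau,t)$ fully decides whether $z\in U_G$. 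Thus to place a point into $U_G$ or into $V_G$ it suffices to extend the stem just past the level where that point leaves it and fix a single value of $t$.

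Next I would verify density. Fix $p=(\sigma,s)$ and $N$. Since deleting finitely many points from a dense subset of $2^\omega$ leaves it dense, $\set{x_n}{n\ge N}\cup\set{y_n}{n\ge N}$ is still dense, so some $w$ among these extends $\sigma$; say $w=x_n$ with partner $w'=y_n\neq w$ (the case $w=y_n$ is symmetric). If $w'$ also extends $\sigma$, let $\ell\ge\dom\sigma$ be the level where $w$ and $w'$ split and set $\tau=(w\rs(\ell+1))\cat\langle 1-w(\ell+1)\rangle$; then $w'$ branches off $\tau$ at level $\ell$ and $w$ branches off $\tau$ at level $\ell+1$, so their branch-off nodes are distinct, and I extend $s$ to $t$ on $\dom\tau$ by assigning $1$ at $w'$'s node and $0$ at $w$'s node. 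If instead $w'$ does not extend $\sigma$, then $w'$ already branches off $\sigma$ and $s$ has already decided which of $U_G,V_G$ contains $w'$; here I set $\tau=\sigma\cat\langle 1-w(\dom\sigma)\rangle$, so that $w$ branches off $\tau$ at level $\dom\sigma$, and assign $t$ at $w$'s node the value opposite to the one $s$ gives $w'$. In either case $q=(\tau,t)\le p$ lies in $D_N$, using the membership computation of the previous paragraph.

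The only real subtlety, and the step I would be most careful about, is the first case: one must arrange that $w$ and $w'$ leave the stem $\tau$ at two \emph{different} levels, since otherwise they share a branch-off node and cannot be sent to opposite sides. Following $w$ one step past the splitting level $\ell$ and then turning away from $w$ is exactly what separates the two nodes and lets the two values of $t$ be chosen independently. Everything else is a routine check that $\tau\supseteq\sigma$, that $t\supseteq s$, and that the computation of membership in $U_G$ and $V_G$ applies to $w$ and $w'$ under any generic extending $q$.
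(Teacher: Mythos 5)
Your proof is correct and follows essentially the same route as the paper's: a density argument over $\QQ$ in which, given a condition, one finds an undecided pair meeting the stem's cylinder and splits into the same two cases (partner already branched off the stem versus both points in the cylinder), using the same trick of extending the stem one step past the splitting level so the two branch-off nodes are distinct and can receive opposite values. Your $D_N$ bookkeeping (discarding finitely many points of a dense set) is a minor repackaging of the paper's observation that each condition can be extended to enlarge the set $E_p$ of decided-split pairs.
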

\begin{proof}
  Given $p\in\QQ$ we let $E_p$ be the set of $n \in \omega$ for which $U_{p}$ and $V_{p}$ each contain a member of
  $\{x_n,y_n\}$. We will show that for each $p\in\QQ$, there exist $q\le p$ and $n \not\in E_p$ with $n\in E_r$.  Let
  $p\in\QQ$ be given.  Since $X\cup Y$ is dense, there must be some $n$ such that at least one of $x_n$ or $y_n$ is in
  $[\sigma_p]$.  We consider the case $x_n\in [\sigma_p]$; the case $y_{n}  \in [\sigma_{p}]$ can be handled similarly.

  Suppose first that $y_n\not\in [\sigma_p]$; then $y_n\supseteq \tau$ for some $\tau\in\dom{s_p}$.  Let $\sigma_q$ be
  some extension of $\sigma_p$ such that $x_n\not\supseteq \sigma_q$; say $k$ is minimal such that $x_n(k)\neq
  \sigma_q(k)$.  Let $\nu = (\sigma_q\rs k)^\cat x_n(k)$.  Define $s_q$ so that $s_q(\nu) = 1 - s_p(\tau)$.

  Now suppose $y_n\in [\sigma_p]$.  Then we may find $\sigma_q$ extending $\sigma_p$ such that neither of $x_n,y_n$ are
  in $[\sigma_q]$.  Let $k$ and $\ell$ be minimal such that $x_n(k)\neq \sigma_q(k)$ and $y_n(\ell)\neq \sigma_q(\ell)$;
  let \[\tau = (\sigma_q\rs k)^\cat x_n(k)\] and \[\nu = (\sigma_q\rs \ell)^\cat y_n(\ell).\]  Define $s_q$ so that
  $s_q(\tau) = 0$ and $s_q(\nu) = 1$.
\end{proof}

Lemma \ref{lemma:cohen}, and the fact that $\QQ$ is isomorphic to a dense subset of $\CC\times\CC$, give the following.

\begin{corollary}
  \label{cor:sep}
  $\CSN(\mathbf{\Sigma^0_1}) \ge \cov(\SM)$.
\end{corollary}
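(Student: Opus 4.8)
The plan is to show that every family $\SF$ witnessing the value of $\CSN(\mathbf{\Sigma^0_1})$ has cardinality at least $\cov(\SM)$; if no witnessing family exists then $\CSN(\mathbf{\Sigma^0_1}) = (2^{\aleph_0})^+ > 2^{\aleph_0}\ge\cov(\SM)$ and there is nothing to prove. So I would fix such an $\SF$ and suppose toward a contradiction that $\card{\SF} < \cov(\SM)$, aiming to produce a single open set $A$ that violates clause (4) of the definition of $\CSN$ for $\SF$.

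The first step is to replace the appeal to a fully $\QQ$-generic filter in Lemma~\ref{lemma:cohen} by an appeal to a filter meeting countably many explicit dense sets. Fix $(f,g)\in\SF$ and apply Lemma~\ref{lemma:cohen} with $x_n = f(n)$ and $y_n = g(n)$; this is legitimate because $\set{f(n)}{n<\omega}\cup\set{g(n)}{n<\omega}$ is dense by clause (1) and $f(n)\neq g(n)$ by clause (3). Writing $E_q = E_q^{(f,g)}$ for the sets defined in that proof, the extension step carried out there shows, upon iteration, that
\[
  D^{(f,g)}_k = \set{q\in\QQ}{\card{E_q}\ge k}
\]
is dense in $\QQ$ for every $k<\omega$. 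Since the nodes appearing in the domains of the conditions along a filter are pairwise incomparable, the sets $U_G$ and $V_G$ are disjoint; consequently, if a filter $G$ meets every $D^{(f,g)}_k$ then $\bigcup_{q\in G}E_q$ is infinite, and for each $n$ in this union $U_G$ contains exactly one of $f(n),g(n)$. Thus any such $G$ satisfies $\exists^\infty n\;\card{U_G\cap\{f(n),g(n)\}} = 1$.

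The second step is to find one filter that works simultaneously for all of $\SF$. The family $\set{D^{(f,g)}_k}{(f,g)\in\SF,\ k<\omega}$ consists of dense subsets of $\QQ$ and has cardinality at most $\card{\SF}\cdot\aleph_0 < \cov(\SM)$ (the product is $<\cov(\SM)$ even when $\SF$ is finite, since $\cov(\SM) > \aleph_0$). Because $\QQ$ is a countable atomless poset --- indeed isomorphic to a dense subset of $\CC\times\CC$, which is forcing-equivalent to Cohen forcing --- I would invoke the standard characterization of $\cov(\SM)$ as the least number of dense subsets of Cohen forcing admitting no common filter (equivalently, $\MA_\kappa$ for Cohen forcing holds for all $\kappa<\cov(\SM)$). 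This yields a single filter $G\subseteq\QQ$ meeting every $D^{(f,g)}_k$.

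Setting $A = U_G$, which is open and hence in $\mathbf{\Sigma^0_1}$, the first step shows that every $(f,g)\in\SF$ satisfies $\exists^\infty n\;\card{A\cap\{f(n),g(n)\}} = 1$. Hence the set in clause (4) equals all of $\SF$ and has cardinality $\card{\SF}$, contradicting the requirement that it be smaller than $\card{\SF}$. This contradiction gives $\card{\SF}\ge\cov(\SM)$, as desired. The only non-bookkeeping ingredient is the characterization of $\cov(\SM)$ via filters meeting dense sets of a countable poset, and I expect the sole delicate point in the write-up to be extracting the countably many dense sets $D^{(f,g)}_k$ from the proof of Lemma~\ref{lemma:cohen} and checking that meeting them forces $U_G$ to separate infinitely many pairs $\{f(n),g(n)\}$.
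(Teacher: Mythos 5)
Your proposal is correct and is essentially the paper's own (largely implicit) argument spelled out: the paper derives the corollary from Lemma~\ref{lemma:cohen} together with the identification of $\QQ$ with a dense subset of $\CC\times\CC$, which is precisely your extraction of the dense sets $D^{(f,g)}_k$ from the lemma's extension step, combined with the standard characterization of $\cov(\SM)$ as the least number of dense subsets of a countable poset admitting no common filter, and taking $A = U_G$. The supplementary details you verify (that $U_G$ and $V_G$ are disjoint along a filter, and the trivial case where no witnessing family exists) are accurate and complete the argument as intended.
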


Theorem \ref{thm:sep->trivial} is a variant of Theorem \ref{thm:trivial-kappa+}, restricted to the case $\kappa =
\omega$.  Theorem \ref{thm:sep->trivial} assumes triviality on countable sets, instead of  sets of cardinality
$\aleph_{1}$, as in Theorem \ref{thm:trivial-kappa+}, at the cost of assuming a weak fragment of Martin's Axiom.

\begin{theorem}
  \label{thm:sep->trivial}
  Assume $\CSN(\mathbf{\Delta^1_1}) > \omega_1$, and let $\pi$ be a cardinality-preserving automorphism of
  $\SP(2^\omega)/\Fin$ which is trivial on every countable subset of $2^\omega$.  Then $\pi$ is trivial.
\end{theorem}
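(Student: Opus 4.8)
The plan is to reuse, essentially verbatim, the coding construction from the proof of Theorem~\ref{thm:trivial-kappa+} with $\kappa=\omega$, and to replace the one step that fails at $\omega$ — the claim that $H_A=\bigcap_{\beta<\omega}G_{A,\beta}$ is cofinite in $A$ — by an argument using $\CSN(\mathbf{\Delta^1_1})>\omega_1$. First I would fix a bijective selector $\pi^*$, choose for each countable $A$ a function $f_A\colon A\to\pi^*(A)$ witnessing triviality on $A$ (so by Lemma~\ref{lemma:trivial->bijection} $f_A$ is a bijection mod finite with $\pi^{-1}([C])=[f_A^{-1}(C)]$ for $C\subseteq\pi^*(A)$), and build $h\colon 2^\omega\to 2^\omega$ from $R_\beta=\{x:x(\beta)=1\}$, $T_\beta=(\pi^*)^{-1}(R_\beta)$ and $y_\gamma=\{\beta:\gamma\in T_\beta\}$. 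As before, for each countable $A$ and each $\beta$ the set $G_{A,\beta}=\{\gamma\in A:\gamma\in T_\beta\iff f_A(\gamma)\in R_\beta\}$ is cofinite in $A$ (the relevant ideal being now $\Fin$), and $h(\gamma)=f_A(\gamma)$ exactly when $\gamma\in H_A$; so the goal becomes to show that $A\sm H_A$ is finite for every countable $A$. The extra ingredient is that $h$ computes $\pi$ correctly modulo countable on Borel sets: since $\pi$ is cardinality-preserving, Lemma~\ref{lemma:additivity} gives the lift $\pi_{\omega_1}$, an automorphism of the $\sigma$-complete algebra $\SP(2^\omega)/\Ctble$, and because $h^{-1}$ commutes with all (including countable) Boolean operations while $h^{-1}(R_\beta)=T_\beta$ and $\pi_{\omega_1}^{-1}([R_\beta]_{\omega_1})=[T_\beta]_{\omega_1}$, it follows (as in the footnote to Theorem~\ref{thm:trivial-kappa+}) that $h^{-1}(B)\sd S_B$ is countable for every Borel $B$, where $S_B$ is a fixed selector of $\pi^{-1}([B])$.

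The heart of the argument is the dichotomy: either $(*)$ there is a countable $W$ with $A\sm H_A$ finite for every countable $A$ disjoint from $W$, or else one can build a family contradicting the hypothesis. Assuming $\neg(*)$, I would recursively construct pairwise disjoint countable \emph{disagreement} sets $A_\xi$ $(\xi<\omega_1)$ — each with $h(\gamma)\neq f_{A_\xi}(\gamma)$ for all $\gamma\in A_\xi$ — whose images $f_{A_\xi}''(A_\xi)$ are exactly pairwise disjoint and each dense in $2^\omega$. The recursion succeeds because at each stage $\xi<\omega_1$ the accumulated set $\bigcup_{\eta<\xi}(A_\eta\cup f_{A_\eta}''(A_\eta)\cup(\pi^*)^{-1}(f_{A_\eta}''(A_\eta)))$ is a countable union of countable sets, hence countable, so $\neg(*)$ supplies a fresh disagreement set avoiding it; cardinality-preservation keeps the images from collapsing and makes $(\pi^*)^{-1}$ of a countable set countable. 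Enumerating $A_\xi=\{\gamma^\xi_n:n<\omega\}$ and setting $f^\xi(n)=h(\gamma^\xi_n)$, $g^\xi(n)=f_{A_\xi}(\gamma^\xi_n)$ produces $\SF=\{(f^\xi,g^\xi):\xi<\omega_1\}$ satisfying conditions (1)--(3). For condition (4), a Borel $B$ splits $(f^\xi,g^\xi)$ infinitely often exactly when $A_\xi\cap(h^{-1}(B)\sd f_{A_\xi}^{-1}(B))$ is infinite; since $f_{A_\xi}^{-1}(B)=^* S_B\cap A_\xi$ and $h^{-1}(B)\sd S_B$ is countable, and the $A_\xi$ are disjoint, only countably many $\xi$ can qualify. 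Thus $\SF$ is a good family of size $\omega_1$, contradicting $\CSN(\mathbf{\Delta^1_1})>\omega_1$; so $(*)$ holds.

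Finally, from $(*)$ I would fix such a $W$, redefine $h$ on the countable set $W$ to agree with $f_W$ (legitimate since $\pi$ is trivial on the countable $W$), and verify that the resulting $h'$ satisfies $h'\rs A=^* f_A$ for \emph{every} countable $A$. Then $h'$ is a bijection mod finite, the map $\rho([A])=[(h')''(A)]$ is a trivial automorphism, and $\rho$ agrees with $\pi$ on all countable sets; Lemma~\ref{lemma:equal-on-kappa} then yields $\pi=\rho$, so $\pi$ is trivial.

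The main obstacle I anticipate is the recursive construction of the $A_\xi$: arranging simultaneously that every point of each $A_\xi$ is a genuine disagreement point (for condition (3)), that the images are \emph{exactly} disjoint (condition (2)), and that each image is \emph{dense} in $2^\omega$ (condition (1)). Density is the delicate part, since it requires that, under $\neg(*)$, disagreement points with image in any prescribed basic clopen set persist off every countable set; I expect to establish this by an upward-closure (Baire-category) argument showing that if the "bad" basic clopen sets covered a dense open set then $(*)$ would already hold. Verifying condition (4), by contrast, is immediate once the modulo-countable correctness of $h$ from the first paragraph is in hand.
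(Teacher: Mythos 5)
Your overall architecture is faithful to the paper's: the same coding ($R_\beta$, $T_\beta$, $y_\gamma$, $h$), the same use of cardinality-preservation via Lemma~\ref{lemma:additivity} to get that $h$ computes $\pi$ correctly modulo countable sets on all Borel sets, and the same endgame in which a transfinite sequence of disjoint countable ``disagreement'' sets, fed into $\CSN(\mathbf{\Delta^1_1})>\omega_1$, produces a Borel $B$ with $h^{-1}(B)$ differing uncountably from a selector of $\pi^{-1}([B])$. You also correctly identify the one step of Theorem~\ref{thm:trivial-kappa+} that fails at $\omega$ (the intersection $H_A=\bigcap_\beta G_{A,\beta}$ need not be cofinite in $A$), and your verification of condition (4) of the $\CSN$ definition, using disjointness of the $A_\xi$ and the modulo-countable correctness of $h$, is exactly right.

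The genuine gap is condition (1): your recursion demands that each disagreement-image set $f_{A_\xi}''(A_\xi)$ be dense in all of $2^\omega$, and this is unattainable in general. Nothing in the hypotheses prevents $\pi$ from being trivial on many basic clopen sets $N_\sigma$; if $e$ witnesses triviality on such an $N_\sigma$, then a pressing-down argument applied to the fact that $h^{-1}(C)\sd e^{-1}(C)$ is countable for Borel $C\subseteq N_\sigma$ shows that $\set{x\in\pi^*(N_\sigma)}{h(x)\neq e(x)}$ is countable, while each $f_A$ agrees with $e$ modulo finite on $A\cap\pi^*(N_\sigma)$; hence across your disjoint family at most countably many $\xi$ can place infinitely many disagreement images inside $N_\sigma$, so if even one $N_\sigma$ is trivial, all but countably many of your pairs fail density. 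Padding cannot repair this: reusing $g$-values across pairs violates (2), and dummy values in either coordinate create spurious infinite splittings by arbitrary clopen sets, destroying (4). Your anticipated Baire-category fix also fails, because by Lemma~\ref{lemma:closure2} triviality on countably many clopen sets gives triviality on a dense \emph{open} set while the nontriviality locus remains a nowhere dense perfect set, a situation neither horn of your dichotomy $(*)$ detects; moreover $\neg(*)$ only supplies disagreement sets with no control whatsoever over where their images land, so it cannot drive density anywhere. The paper's proof supplies precisely the two ingredients your outline is missing: the tree $Q$ of those $\sigma$ with $\pi$ nontrivial on $N_\sigma$, proved \emph{perfect} using Lemma~\ref{lemma:closure2} together with triviality on countable sets (so that $[Q]$ is a copy of $2^\omega$ and $\CSN$ can be applied relativized to $[Q]$), and Claim~\ref{claim:dense and not equal}, proved by pressing down on a stationary subset of $[2^\omega]^\omega$, which shows that off any countable set there are disagreement points whose $h$- and $f_B$-images are \emph{jointly} dense in $[Q]$ (joint density of the two ranges is all that condition (1) requires; demanding density of the $g$-range alone, as you do, is both stronger and unnecessary). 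Without a counterpart of this claim and of the relativization to $[Q]$, the recursion in your second paragraph cannot get started, so the proposal as written does not close.
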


\begin{proof}
  Suppose that $\pi$ is nontrivial and let $\pi^*$ be a bijective selector for $\pi$.  We may choose $\pi$ so that for
  all $\sigma,\tau\in 2^{<\omega}$, if $\sigma\subseteq \tau$ then $\pi^*(N_\sigma)\supseteq \pi^*(N_\tau)$, and if
  $\sigma\perp \tau$ then $\pi^*(N_\sigma)\cap \pi^*(N_\tau) = \emptyset$.

  For each $x\in 2^\omega$ and $n < \omega$, there is a unique $\sigma\in 2^n$ such that $x\in \pi^*(N_\sigma)$.
  Moreover, these $\sigma$'s form a branch through $2^\omega$, which we will call $h(x)$.  Thus $h : 2^\omega\to
  2^\omega$ is a function satisfying $h^{-1}(N_\sigma) = \pi^*(N_\sigma)$ for each $\sigma\in 2^{<\omega}$.  By
  Lemma~\ref{lemma:additivity}, it follows that $h^{-1}(B) \sd \pi^*(B)$ is countable, for every Borel set $B\subseteq
  2^\omega$. In particular, $h$ is countable-to-one.
	
  Let $Q$ be the set of $\sigma\in 2^{<\omega}$ such that $\pi$ is nontrivial on $N_\sigma$.
	
  \begin{claim}
    $Q$ is a perfect tree.
  \end{claim}
  \begin{proof}
    Clearly, every $\sigma\in Q$ has at least one extension in $Q$.  Suppose that for some $\sigma\in Q$, there is
    exactly one $x\in [Q]$ which extends $\sigma$.  Then $\pi$ is trivial on $N_\tau$ for every $\tau\supseteq\sigma$
    with $\tau\not\subseteq x$; hence by Lemma~\ref{lemma:closure2} (and our assumption that $\pi$ is trivial on
    countable sets), $\pi$ is trivial on their union, i.e.  $N_\sigma\sm\{x\}$.  But then clearly $\pi$ is trivial on
    $N_\sigma$.
  \end{proof}
  For each countable $A\subseteq 2^\omega$ we may fix a function $f_A : \pi^*(A)\to A$ such that for all $X\subseteq A$,
  $\pi([X]) = [f_A^{-1}(X)]$.
  \begin{claim}
    \label{claim:dense and not equal}
    Let $A\subseteq 2^\omega$ be countable.  Then there is a countable set $B\subseteq 2^\omega$ with $B\supseteq A$,
    and an infinite set $X\subseteq B\sm A$, such that $h''X\cup f_B''X$ is dense in $[Q]$, and for every $x\in X$,
    $h(x)\neq f_B(x)$.
  \end{claim}

  \begin{proof}
    Suppose otherwise.  Then there is a countable $A^*\subseteq 2^\omega$ such that for every countable $B\supseteq
    A^*$, there is some $\sigma\in Q$ such that the set of $x\in B\sm A^*$ with $h(x)\neq f_B(x)$ and $N_\sigma\cap
    \{h(x),f_B(x)\} \neq \emptyset$ is finite.  Pressing down, we may fix a $\sigma^*\in Q$ and a finite $F^* \subseteq
    2^\omega$ such that for all $B$ in some stationary subset of $[2^\omega]^\omega$, if $x\in B\sm A^*\cup F^*$, then
    whenever one of $h(x),f_B(x)$ is in $N_{\sigma^*}$, we have $h(x) = f_B(x)$.  In particular, if $x\in
    \pi^*(N_{\sigma^*}) = h^{-1}(N_{\sigma^*})$, then $h(x)\in N_{\sigma^*}$ and so $h(x) = f_B(x)$ as long as $x\in
    B\sm (A^*\cup F^*)$.  It follows that $\pi$ is trivial on $\pi^*(N_{\sigma^*})$, a contradiction.
  \end{proof}

  By applying Claim~\ref{claim:dense and not equal} repeatedly, we may find a $\subseteq$-increasing sequence $\langle
  A_\alpha : \alpha < \omega_1\rangle$ consisting of countable subsets of $2^\omega$, and infinite sets $X_\alpha
  \subseteq A_{\alpha+1}\sm A_\alpha$, such that for every $\alpha < \omega_1$, $h''X_\alpha \cup f_{A_{\alpha+1}}''
  X_\alpha$ is dense in $[Q]$, and $h(x) \neq f_{A_{\alpha+1}}(x)$ for every $x\in X_\alpha$. Since $h$ is
  countable-to-one, we may thin the sequence if necessary so that the sets $h[X_{\alpha}]$ are disjoint for distinct
  $\alpha$.  Applying the assumption that $\CSN(\mathbf{\Delta^1_1}) > \omega_1$ (and possibly thinning our sequence
  again), we may find a Borel set $B\subseteq [Q]$ such that for every $\alpha < \omega_1$, there are infinitely many
  $x\in X_\alpha$ such that $B$ contains exactly one of $h(x), f_{A_{\alpha+1}}(x)$.  Hence,
  \[
    (h^{-1}(B)\sd f_{A_{\alpha+1}}^{-1}(B))\cap (A_{\alpha+1}\sm A_\alpha)
  \]
  is infinite for every $\alpha < \omega_1$.  Since $f_{A_{\alpha+1}}^{-1}(B)\sd (\pi^*(B)\cap A_{\alpha+1})$ is finite
  for each $\alpha$, it follows that $\pi^*(B)\sd h^{-1}(B)$ is uncountable.  This is a contradiction.
\end{proof}

\begin{remark}\label{remark:dhyp}
  Since $\dd \ge \cov(\SM)$, if we replace ``$\CSN(\mathbf{\Delta^1_1}) > \omega_1$'' with ``$\cov(\SM) > \omega_1$'' in
  Theorem~\ref{thm:sep->trivial}, by Theorem~\ref{thm:balcar-frankiewicz:dd} we can then remove the assumption that
  $\pi$ is cardinality-preserving.  On the other hand, it follows from Theorem~\ref{thm:balcar-frankiewicz} that if
  $\pi$ and $\pi^{-1}$ are both trivial on every countable subset of $2^\omega$, then $\pi$ must be
  cardinality-preserving.
\end{remark}

\section{Isomorphisms between countable subalgebras}
\label{sec:qsets}

In~\cite[Theorem~3.1]{Geschke.shift}, Geschke showed that any isomorphism between countable subalgebras of
$\SP(\omega)/\Fin$ extends to a trivial automorphism, and in fact a trivial automorphism witnessed by a permutation of
$\omega$.  In this section we find a necessary and sufficient condition for an isomorphism between two countable,
atomless subalgebras of $\SP(\lambda)/\SI_\kappa$ to extend to a trivial automorphism, in the case where $\kappa$ has
uncountable cofinality.  We then use our result to examine automorphisms of $\SP(\omega_1)/\Ctble$ when a $Q$-set
exists.

Recall that any two countable, atomless Boolean algebras are isomorphic.  In particular, if $\SA$ is a countable,
atomless Boolean algebra, then $\SA$ is isomorphic to the Boolean algebra of clopen subsets of $2^\omega$, hence
$\SA$ is generated by elements $a_\sigma$ ($\sigma\in 2^{<\omega}$) such that $a_\sigma\meet a_\tau = 0$ for
$\sigma\perp \tau$, and $a_{\sigma^\cat 0}\join a_{\sigma^\cat 1} = a_\sigma$.  Note that in this case, if $\pi : \SA\to
\SB$ is an isomorphism, then the elements $b_\sigma = \pi(a_\sigma)$ ($\sigma\in 2^{<\omega}$) generate $\SB$, and
satisfy the same relations.

\begin{definition}
  If $\SA$ is a countable, atomless Boolean subalgebra of $\SP(\lambda)/\SI_\kappa$, then we say that a sequence
  $\bar{A} = \seq{A_\sigma}{\sigma\in 2^{<\omega}}$ of subsets of $\lambda$ is a \emph{nice sequence of representatives}
  for $\SA$ if
  \begin{itemize}
    \item  for every $\sigma\in 2^{<\omega}$, the sets $A_{\sigma^\cat 0}$ and $A_{\sigma^\cat 1}$ partition $A_\sigma$,
      and
    \item  the sequence $[A_\sigma]$ ($\sigma\in 2^{<\omega}$) generates $\SA$.
  \end{itemize}

  If $\bar{A}$ is a nice sequence of representatives for $\SA$, and $x\in 2^\omega$, then we set
  \[
    A_x = \bigcap\set{A_\sigma}{\sigma \subset x}
  \]
  and
  \[
    X(\bar{A}) = \set{x\in 2^\omega}{A_x\neq \emptyset}
  \]
\end{definition}
Note that if $\bar{A}$ is a nice sequence of representatives for $\SA$, then $[A_\emptyset]$ is the top element of
$\SA$, and
\[
  A_\emptyset = \bigcup\set{A_x}{x\in X(\bar{A})}
\]
The following lemma shows that this information serves as a sort of invariant for $\SA$.
\begin{lemma}
  \label{lemma:X-invariance}
  Suppose that $\SA$ is a countable, atomless Boolean subalgebra of $\SP(\lambda)/\SI_\kappa$, where $\cf{\kappa} > \omega$,
  and that $\bar{A}$ and $\bar{B}$ are nice sequences of representatives for $\SA$ with $A_\sigma \sim_\kappa B_\sigma$
  for each $\sigma\in 2^{<\omega}$.  Then there is an $S\in \SI_\kappa$ such that for every $x\in 2^\omega$,
  $A_x\sm S = B_x\sm S$.  Moreover, $X(\bar{A})  \sim_{\kappa} X(\bar{B})$.
\end{lemma}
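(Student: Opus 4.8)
The plan is to produce a single small set $S$ that simultaneously corrects all discrepancies between the two sequences, and then to verify both conclusions by elementary manipulation outside $S$. Concretely, I would set
\[
  S = \bigcup\set{A_\sigma \sd B_\sigma}{\sigma\in 2^{<\omega}}.
\]
Each term $A_\sigma \sd B_\sigma$ has cardinality $< \kappa$ by the hypothesis $A_\sigma \sim_\kappa B_\sigma$, and there are only countably many $\sigma\in 2^{<\omega}$. Since $\cf\kappa > \omega$, a union of countably many sets each of cardinality $< \kappa$ still has cardinality $< \kappa$; hence $S\in\SI_\kappa$. This is the only place the hypothesis $\cf\kappa > \omega$ enters, and it is essential: without it a countable union of small sets could reach cardinality $\kappa$.

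For the first conclusion, note that $A_\sigma \sd B_\sigma \subseteq S$ gives $A_\sigma\sm S = B_\sigma\sm S$ for every $\sigma$. Fixing $x\in 2^\omega$ and using that removing $S$ commutes with intersections,
\[
  A_x \sm S = \bigcap\set{A_\sigma\sm S}{\sigma\subset x} = \bigcap\set{B_\sigma\sm S}{\sigma\subset x} = B_x\sm S,
\]
which is exactly the required equality.

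For the \emph{moreover} clause I would exploit the disjointness of the fibers. If $x\neq y$, then at the first level where they differ the two nodes are perpendicular, and since the partitioning property makes $A_{\sigma\cat 0}$ and $A_{\sigma\cat 1}$ disjoint for each $\sigma$, it follows that $A_x\cap A_y = \emptyset$ (and likewise for $\bar{B}$). Now suppose $x\in X(\bar{A})\sm X(\bar{B})$. Then $A_x\neq\emptyset$ while $B_x=\emptyset$, so from $A_x\sm S = B_x\sm S = \emptyset$ we get $A_x\subseteq S$, and in particular $A_x\cap S\neq\emptyset$. Choosing a point of $A_x$ for each such $x$ yields a map from $X(\bar{A})\sm X(\bar{B})$ into $S$, injective precisely because the $A_x$ are pairwise disjoint; hence $\card{X(\bar{A})\sm X(\bar{B})} \le \card{S} < \kappa$. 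The symmetric argument, interchanging the roles of $\bar{A}$ and $\bar{B}$, bounds $\card{X(\bar{B})\sm X(\bar{A})}$, and combining the two gives $X(\bar{A})\sim_\kappa X(\bar{B})$.

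I expect the main point of care to be bookkeeping rather than a genuine obstacle: one must check that removing $S$ really does commute with the infinite intersection defining $A_x$, and that the fiber-disjointness supporting the injection in the last step holds for each sequence on its own. The hypothesis $\cf\kappa > \omega$ carries all the weight through the estimate $\card{S} < \kappa$, and everything else is a direct verification.
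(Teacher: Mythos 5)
Your proposal is correct and follows essentially the same route as the paper: the same choice $S = \bigcup\set{A_\sigma\sd B_\sigma}{\sigma\in 2^{<\omega}}$ (with $\cf\kappa>\omega$ yielding $\card{S}<\kappa$), the same commutation of $\cdot\sm S$ with the intersections defining $A_x$, and the same use of pairwise disjointness of the fibers $A_x$ (and $B_x$) to inject $X(\bar{A})\sd X(\bar{B})$ into $S$. Your write-up merely spells out details the paper leaves implicit; no gaps.
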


\begin{proof}
  Let $S = \bigcup\set{A_\sigma\sd B_\sigma}{\sigma\in 2^{<\omega}}$; then $\card{S} < \kappa$, and for every $x\in
  2^\omega$, $A_x\sm S = B_x\sm S$.  Now, if $x\in X(\bar{A})\sd X(\bar{B})$, then it follows that
  $A_x\cup B_x\subseteq S$, since one of $A_x$ or $B_x$ must be empty.  But for each $\alpha\in S$, there is at most
  one $x\in 2^\omega$ such that $\alpha \in A_x$, and likewise, there is at most one $y\in 2^\omega$ such that
  $\alpha\in B_y$.
\end{proof}

Theorem \ref{thm:ctble-iso} characterizes when an isomorphism between two countable, atomless subalgebras of
$\SP(\lambda)/\SI_\kappa$ can extend to a trivial isomorphism.

\begin{theorem}
  \label{thm:ctble-iso}
  Let $\bar{A}$, $\bar{B}$ be nice sequences of representatives for countable, atomless Boolean subalgebras of
  $\SP(\lambda)/\SI_{\kappa}$, where $\cf{\kappa} > \omega$.  Then the following are equivalent.
  \begin{enumerate}
    \item  There is a trivial isomorphism from $\SP(A_{\emptyset})/\SI_{\kappa}$ to $\SP(B_{\emptyset})/\SI_{\kappa}$
    which sends $[A_\sigma]$ to $[B_\sigma]$, for every $\sigma\in 2^{<\omega}$.
    \item  $X(\bar{A}) \sim_{\kappa} X(\bar{B})$, and there is some $S\in \SI_\kappa$ such that for all $x\in 2^\omega$,
    $\card{A_x\sm S} = \card{B_x\sm S}$.
  \end{enumerate}
\end{theorem}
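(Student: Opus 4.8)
The plan is to prove the two implications separately. The direction (2)$\Rightarrow$(1) is a direct gluing construction, while (1)$\Rightarrow$(2) extracts the invariant from a witnessing bijection via Lemma~\ref{lemma:X-invariance}. Throughout I use that, since $A_{\sigma^\cat 0}$ and $A_{\sigma^\cat 1}$ partition $A_\sigma$, the sets $\set{A_x}{x\in 2^\omega}$ partition $A_\emptyset$ and $A_\sigma = \bigcup\set{A_x}{\sigma\subset x}$ (and likewise for $\bar{B}$), together with the fact that $\cf\kappa > \omega$, so that countable unions of sets in $\SI_\kappa$ stay in $\SI_\kappa$.

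For (2)$\Rightarrow$(1), fix $S\in\SI_\kappa$ with $\card{A_x\sm S} = \card{B_x\sm S}$ for every $x\in 2^\omega$. For each $x$ choose a bijection $g_x : B_x\sm S\to A_x\sm S$, and set $g = \bigcup\set{g_x}{x\in 2^\omega}$. Since the domains $B_x\sm S$ partition $B_\emptyset\sm S$ and the ranges $A_x\sm S$ partition $A_\emptyset\sm S$, the map $g$ is a bijection from the co-small set $B_\emptyset\sm S$ onto the co-small set $A_\emptyset\sm S$. Defining $\pi([C]) = [g^{-1}(C)]$ for $C\subseteq A_\emptyset$, I check that $\pi$ is the required trivial isomorphism: for $\beta\in B_x\sm S$ we have $g(\beta)\in A_x$, whence $g(\beta)\in A_\sigma$ iff $\sigma\subset x$ iff $\beta\in B_\sigma$, so $g^{-1}(A_\sigma)\cap (B_\emptyset\sm S) = B_\sigma\sm S$ and therefore $\pi([A_\sigma]) = [B_\sigma]$. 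That $\pi$ is an isomorphism is immediate from Lemma~\ref{lemma:trivial->bijection}, as $g$ is a bijection between sets with small complements. (Note $X(\bar{A})\sim_\kappa X(\bar{B})$ is not even needed here: the second clause already forces $B_x\subseteq S$ whenever $A_x = \emptyset$.)

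For (1)$\Rightarrow$(2), let $\pi$ be a trivial isomorphism as in (1), witnessed by $f : B_\emptyset\to A_\emptyset$. By Lemma~\ref{lemma:trivial->bijection} there are co-small $E\subseteq A_\emptyset$ and $F\subseteq B_\emptyset$ such that $g := f\rs F$ is a bijection onto $E$. From $\pi([A_\sigma]) = [B_\sigma]$ we obtain $g^{-1}(A_\sigma\cap E)\sim_\kappa B_\sigma\cap F$ for every $\sigma$, so $\seq{g^{-1}(A_\sigma\cap E)}{\sigma\in 2^{<\omega}}$ and $\seq{B_\sigma\cap F}{\sigma\in 2^{<\omega}}$ are nice sequences of representatives (with common top element $F$) for the same subalgebra of $\SP(F)/\SI_\kappa$, agreeing termwise mod $\SI_\kappa$. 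Applying Lemma~\ref{lemma:X-invariance} inside $\SP(F)/\SI_\kappa$ yields $S_0\in\SI_\kappa$, $S_0\subseteq F$, with $g^{-1}(A_x\cap E)\sm S_0 = (B_x\cap F)\sm S_0$ for all $x$. Pushing this equality forward through the bijection $g$ gives, for every $x$,
\[
  \card{A_x\sm U} = \card{B_x\sm V},
\]
where $U = (A_\emptyset\sm E)\cup g''S_0\subseteq A_\emptyset$ and $V = (B_\emptyset\sm F)\cup S_0\subseteq B_\emptyset$ both lie in $\SI_\kappa$.

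The main obstacle is the final consolidation of $U$ and $V$ into a single small set, which is where overlaps between $A_\emptyset$ and $B_\emptyset$ and atoms of cardinality $<\kappa$ intervene. Setting $S = U\cup V$, the equality $\card{A_x\sm U} = \card{B_x\sm V}$ already gives $\card{A_x\sm S} = \card{B_x\sm S}$ on every atom with $\card{A_x}\ge\kappa$, since both sides then equal $\card{A_x} = \card{B_x}$, unaffected by deleting a member of $\SI_\kappa$. On the remaining atoms the two sides can differ, but only by points lying in $(A_x\sm U)\cap V$ or $(B_x\sm V)\cap U$; as $x$ ranges over $2^\omega$ these points lie in $V$ and $U$ respectively, so their total is at most $\card U + \card V < \kappa$. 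Hence one may enlarge $S$ by fewer than $\kappa$ further points, deleting atom by atom the surplus from whichever of $A_x\sm S$, $B_x\sm S$ is larger, to obtain $S^*\in\SI_\kappa$ with $\card{A_x\sm S^*} = \card{B_x\sm S^*}$ for all $x$. Finally $X(\bar{A})\sim_\kappa X(\bar{B})$ follows, since any atom with $A_x\neq\emptyset = B_x$ satisfies $A_x\subseteq S^*$ and these disjoint nonempty atoms number fewer than $\kappa$, and symmetrically. I expect the surplus bound $\card U+\card V$ and the accompanying cardinal arithmetic on the small atoms to be the only genuinely delicate points; the construction direction and the termwise application of Lemma~\ref{lemma:X-invariance} are routine.
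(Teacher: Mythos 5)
Your proof is correct, and its overall skeleton matches the paper's: both directions run through Lemma~\ref{lemma:X-invariance} and a per-atom gluing of bijections. The direction (2)$\Rightarrow$(1) is essentially identical to the paper's (and your parenthetical is right: the paper's argument likewise uses only the cardinality clause, deriving $A_x\cup B_x\subseteq S$ for $x\in X(\bar{A})\sd X(\bar{B})$ without ever invoking $X(\bar{A})\sim_\kappa X(\bar{B})$, so the first clause of (2) is indeed redundant there). Where you genuinely diverge is the endgame of (1)$\Rightarrow$(2). The paper takes the small set $S$ produced by Lemma~\ref{lemma:X-invariance}, enlarges it to contain $A_\emptyset\sm E$ and $B_\emptyset\sm F$, and then \emph{closes it under $f$ and $f^{-1}$}; since each orbit is countable and $\kappa$ is uncountable, the closure stays in $\SI_\kappa$, and then $f$ itself restricts to a bijection from $A_x\sm S$ onto $B_x\sm S$ for every $x$, giving the cardinality equality with a single $S$ in one stroke. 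You instead push the lemma's conclusion through $g$ to get $\card{A_x\sm U}=\card{B_x\sm V}$ with two one-sided small sets and then consolidate. Your consolidation does go through, but it is the one delicate point and your write-up compresses it: when $\card{A_x\sm U}=\card{B_x\sm V}=\mu$ is infinite (and $<\kappa$), the surplus you must delete from, say, $B_x\sm S$ can itself have cardinality $\mu$ --- strictly larger than the number of discrepancy points you cite --- and one needs the further observation that in this case $\card{(A_x\sm U)\cap V}=\mu$, because $A_x\sm U=(A_x\sm S)\cup\bigl((A_x\sm U)\cap V\bigr)$ and $\card{A_x\sm S}<\mu$; only then is each per-atom surplus majorized by one of a family of pairwise disjoint subsets of $U\cup V$, keeping the total below $\kappa$ (note this uses only $\card{U\cup V}<\kappa$, not $\cf\kappa>\omega$). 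The paper's closure trick buys exactly the elimination of this two-sided bookkeeping and of all cardinal arithmetic on small atoms, at the cost of one line; your route is a workable and fully elementary substitute once the infinite-atom case is spelled out as above.
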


\begin{proof}
	Suppose that $f : E\to F$ is a bijection, where
	\begin{itemize}
  	\item $E \subseteq A_{\emptyset}$,
  	\item $F \subseteq B_{\emptyset}$,
  	\item $A_{\emptyset}\sm E$ and $B_{\emptyset}\sm F$ are both in $\SI_\kappa$,
  	\item $f''(A_\sigma \cap E)\sim_\kappa B_\sigma$ for all $\sigma\in 2^{<\omega}$.
  \end{itemize}
  Let $\bar{C} = \seq{f''(A_\sigma)}{\sigma\in 2^{<\omega}}$.  Then $\bar{B}$ and $\bar{C}$ satisfy the hypotheses of
  Lemma~\ref{lemma:X-invariance}, so $X(\bar{C}) \sim_\kappa X(\bar{B})$.  Since $f''(A_x\cap E) = C_x\cap F$ for all $x
  \in 2^{\omega}$, it follows that $X(\bar{C})\sim_\kappa X(\bar{A})$ as well, so $X(\bar{A}) \sim_\kappa X(\bar{B})$.
  Now let $S$ be as given in Lemma~\ref{lemma:X-invariance}, applied to $\bar{B}$ and $\bar{C}$, so that $C_x\sm S =
  B_x\sm S$ for all $x \in 2^{\omega}$.  Expanding $S$ if necessary (but preserving its membership in $\SI_{\kappa}$) we
  may assume that $S$ contains $A_{\emptyset}\sm E$ and $B_{\emptyset}\sm F$ and is closed under $f$ and $f^{-1}$.
  Then, for each $x \in 2^{\omega}$, $f$ maps $A_x \sm S$ to $B_x\sm S$. Since $f$ is a bijection it follows that
  $\card{A_x \sm S} = \card{B_x \sm S}$.

  Suppose now that $X(\bar{A}) \sim_{\kappa} X(\bar{B})$, and that there exists an $S\in \SI_\kappa$ such that
  $\card{A_x\sm S} = \card{B_x\sm S}$ for every $x\in 2^\omega$.  For each $$x\in X(\bar{A})\sd X(\bar{B}),$$ either
  $A_x = \emptyset$ or $B_x = \emptyset$, so \[A_x\sm S = B_x\sm S = \emptyset\] and \[A_x\cup B_x\subseteq S.\]  It
  follows that, if \[E = \bigcup\set{A_x}{x\in X(\bar{A})\cap X(\bar{B})}\] and \[F = \bigcup\set{B_x}{x\in
  X(\bar{A})\cap X(\bar{B})},\] then $A_\emptyset\sm S\subseteq E$ and $B_\emptyset\sm S\subseteq F$.  For each $x\in
  X(\bar{A})\cap X(\bar{B})$, choose a bijection $g_x : A_x\sm S\to B_x\sm S$.  Since for $x\neq y$ we have $A_x\cap A_y
  = B_x\cap B_y = \emptyset$, it follows that $g = \bigcup_{x\in X(\bar{A})\cap X(\bar{B})} g_x$ is a bijection from
  $E\sm S$ to $F \sm S$, such that $g''(A_\sigma\sm S) = B_\sigma\sm S$ for every $\sigma\in 2^{<\omega}$.  Then $g$ is
  as desired.
\end{proof}

\begin{remark}
  If $X\subseteq\RR$ has no isolated points then $X\cap N_\sigma$ ($\sigma\in 2^{<\omega}$) forms a nice sequence of
  representatives for a countable, atomless subalgebra $\SA_{X}$ of $\SP(\RR)/\Ctble$, and the set $X(\bar{A})$, where
  each $A_{\sigma}$ is $X\cap N_{\sigma}$, is exactly equal to $X$. If $X$ and $Y$ are two such sets, then there is an
  isomorphism from $\SA_{X}$ to $\SA_{Y}$ sending $[X\cap N_{\sigma}]$ to $[Y\cap N_{\sigma}]$ for every $\sigma$.  By
  Theorem \ref{thm:ctble-iso}, if $X \sd Y$ is uncountable, then there does not exist a trivial isomorphism from
  $\SP(X)/\Ctble$ to $\SP(Y)/\Ctble$ sending each set $[X\cap N_{\sigma}]$ to $[Y\cap N_{\sigma}]$.
\end{remark}

\begin{definition}\label{def:bqset}
  A set $X\subseteq 2^\omega$ is a \emph{$Q_B$-set} if for every $Y\subseteq X$, there is a Borel $B\subseteq 2^\omega$
  such that $B\cap X = Y$.
\end{definition}

The above is a weakening of the usual notion of a $Q$-set, where the set $B$ above is required to be $G_\delta$ and not
just Borel.  The reader can consult~\cite{Miller.special-sets, Miller.special-subsets} for properties of $Q$-sets.  We
note in particular that $\mathrm{MA}_{\kappa}$ implies all subsets of $\RR$ of size $\kappa$ are $Q$-sets (this is credited to 
Silver on page 162 of \cite{Martin-Solovay}), and that the
Ramsey forcing axiom $\SK_4$ implies that all subsets of $\RR$ of size $\omega_1$ are $Q$-sets
(\cite{Todorcevic-Velickovic.MAP}).  As for the difference between $Q$-sets and $Q_B$-sets, Miller
(\cite{Miller.length}) showed that if $X\subseteq\RR$ is a $Q_B$-set, then there is some $\alpha < \omega_1$ such that
every subset of $X$ is relatively $\mathbf{\Sigma^0_\alpha}$, whereas it is consistent that for every $2\le \alpha <
\omega_1$ there is an uncountable $X\subseteq\RR$ such that $\alpha$ is the minimal ordinal for which every subset of
$X$ is relatively $\mathbf{\Sigma^0_\alpha}$.
	
\begin{theorem}
  \label{thm:qsets}
  Suppose there exists a $Q_B$-set $X\subseteq 2^\omega$ of cardinality $\lambda$, where $\cf{\lambda} > \omega$.  Then
  the following are equivalent.
  \begin{enumerate}
    \item\label{cdn:nontrivial-aut}  There exists a cardinality-preserving, nontrivial automorphism of $\SP(\lambda)/\Ctble$.
    \item\label{cdn:inseparable-sets}  There exists a $Q_B$-set $Y\subseteq 2^\omega$ such that $X\sd Y$ is
    uncountable, and for every Borel $B\subseteq 2^\omega$, $\card{B\cap X} + \aleph_0 = \card{B\cap Y} + \aleph_0$.
  \end{enumerate}
\end{theorem}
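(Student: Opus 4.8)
The plan is to prove the two implications separately, using Theorem~\ref{thm:ctble-iso} as the bridge between automorphisms and the combinatorial invariants $X(\bar A)$. Recall from the remark preceding the theorem that any set $Z\subseteq 2^\omega$ without isolated points yields a countable atomless subalgebra $\SA_Z$ of $\SP(\RR)/\Ctble$ via the nice sequence of representatives $Z\cap N_\sigma$, with $X(\bar A)=Z$. Since $X$ is a $Q_B$-set of uncountable cofinality (hence uncountable, hence densely embeddable into $2^\omega$ after a harmless modification), I would first arrange that $X$ has no isolated points so that $\SA_X$ is atomless. The two conditions of Theorem~\ref{thm:ctble-iso} specialize, for such sets $Z$, to: $X(\bar A)\sim_\kappa X(\bar B)$ becomes $X\sd Y$ countable, and the cardinality condition on fibers $A_x\sm S$ becomes, since fibers $A_x=Z\cap\{x\}$ are singletons or empty, simply the statement that $X\cap N_\sigma$ and $Y\cap N_\sigma$ have equal cardinality modulo countable. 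This is where the Borel-separation hypothesis of a $Q_B$-set becomes essential, converting the abstract cardinality condition into the clause ``$\card{B\cap X}+\aleph_0=\card{B\cap Y}+\aleph_0$ for every Borel $B$.''

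\textbf{The direction $(\ref{cdn:inseparable-sets})\Rightarrow(\ref{cdn:nontrivial-aut})$.} Suppose $Y$ is as in (2). I would build the desired automorphism by showing $\SA_X$ and $\SA_Y$ are isomorphic and that this isomorphism does \emph{not} extend trivially, then leverage the $Q_B$-property to extend the abstract isomorphism to all of $\SP(\lambda)/\Ctble$. Concretely, the map sending $[X\cap N_\sigma]\mapsto[Y\cap N_\sigma]$ is an isomorphism of the generating subalgebras. Because $X$ and $Y$ are both $Q_B$-sets with $\card{B\cap X}+\aleph_0=\card{B\cap Y}+\aleph_0$ for every Borel $B$, every element of $\SP(X)/\Ctble$ and $\SP(Y)/\Ctble$ is represented by a Borel set intersected with $X$ (resp.\ $Y$), and the cardinality clause guarantees that the assignment $[B\cap X]\mapsto[B\cap Y]$ is well-defined and bijective on the full quotients. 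This produces a cardinality-preserving automorphism $\pi$ of $\SP(\lambda)/\Ctble$ (identifying $\lambda$ with $X$). Its nontriviality follows from Theorem~\ref{thm:ctble-iso}: since $X\sd Y$ is uncountable, there is \emph{no} trivial isomorphism sending each $[X\cap N_\sigma]$ to $[Y\cap N_\sigma]$, so $\pi$ cannot be trivial.

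\textbf{The direction $(\ref{cdn:nontrivial-aut})\Rightarrow(\ref{cdn:inseparable-sets})$.} Given a cardinality-preserving nontrivial automorphism $\pi$ of $\SP(\lambda)/\Ctble$, I would identify $\lambda$ with $X$ and choose a bijective selector $\pi^*$. Applying $\pi$ to the generating sets $X\cap N_\sigma$ produces representatives $B_\sigma:=\pi^*(X\cap N_\sigma)$, and after intersecting with a suitable Borel transversal (using that $\pi^*$ can be taken to respect the tree structure as in the proof of Theorem~\ref{thm:sep->trivial}) I would extract from the images a set $Y\subseteq 2^\omega$ serving as $X(\bar B)$ for the image algebra. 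The $Q_B$-property of $Y$ should be inherited from that of $X$ through $\pi$ together with cardinality-preservation. That $X\sd Y$ is uncountable is exactly the contrapositive of Theorem~\ref{thm:ctble-iso}: were it countable, the two invariant conditions would both hold (the cardinality condition coming from cardinality-preservation of $\pi$), forcing $\pi$ to be trivial on the relevant subalgebra, and then — since $\pi$ is determined by its action on $\SI_{\aleph_1}/\Ctble$-small sets together with triviality on the generators — trivial outright, contrary to hypothesis. The Borel cardinality equation $\card{B\cap X}+\aleph_0=\card{B\cap Y}+\aleph_0$ follows from cardinality-preservation applied to each $[B\cap X]$ and its image.

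\textbf{Main obstacle.} I expect the delicate point to be the passage from the isomorphism of \emph{countable} generating subalgebras to a \emph{full} automorphism of $\SP(\lambda)/\Ctble$ in direction $(2)\Rightarrow(1)$, and dually the extraction of a genuine $Q_B$-set $Y$ in $(1)\Rightarrow(2)$. The $Q_B$-hypothesis is what makes every quotient element Borel-representable, and the bulk of the care will go into verifying that the cardinality clause is both necessary and sufficient for the fiberwise cardinality condition of Theorem~\ref{thm:ctble-iso} to hold \emph{uniformly} across all Borel sets simultaneously, rather than merely along the countably many generators. Ensuring $Y$ itself is a $Q_B$-set (and not merely a set satisfying the cardinality equation) is the crux, and I would handle it by transporting Borel separations across $\pi$ and invoking cardinality-preservation to rule out cardinality collapse.
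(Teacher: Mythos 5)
Your direction \eqref{cdn:inseparable-sets}$\Rightarrow$\eqref{cdn:nontrivial-aut} is essentially the paper's: define $\pi([B\cap X]) = [B\cap Y]$ for Borel $B$, use the $Q_B$-property of both sets for well-definedness and surjectivity, and derive nontriviality from the uncountability of $X\sd Y$. You route that last step through Theorem~\ref{thm:ctble-iso} (exactly as in the remark following it), where the paper instead argues directly: from a hypothetical witness $f$ of triviality it extracts incompatible $\sigma,\tau$ with $Z = \set{x\in N_\sigma\cap(X\sm S)}{f(x)\in N_\tau}$ uncountable and contradicts $\pi([Z]) = [f''(Z)]$ using a Borel code for $Z$. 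Both routes are sound, and this half of your proposal is correct.

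The genuine gap is in \eqref{cdn:nontrivial-aut}$\Rightarrow$\eqref{cdn:inseparable-sets}, at exactly the point you flag as the crux. Having chosen a nice tree $B_\sigma\subseteq\lambda$ with $\pi([X\cap N_\sigma]) = [B_\sigma]$, the set $Y$ must be the set of branches $y\in 2^\omega$ whose fiber $\bigcap_n B_{y\restriction n}$ is a singleton, with $h(y)$ its unique point; everything downstream ($Y$ being a $Q_B$-set, the Borel cardinality equation, and the triviality contradiction when $X\sd Y$ is countable) rests on $h$ being injective with $h''(Y)$ cocountable in $\lambda$. For that you must prove that every fiber is countable and that all but countably many fibers have size exactly $1$, and your proposal supplies no argument: ``intersecting with a suitable Borel transversal'' and ``$\pi^*$ respects the tree structure as in Theorem~\ref{thm:sep->trivial}'' do not do this, because in that theorem the ambient set was $2^\omega$ itself, so fibers were automatically points, whereas here $\lambda$ is abstract and a fiber of a nice tree can a priori have cardinality $\lambda$. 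The paper's key move, absent from your sketch, is this: since $X$ is a $Q_B$-set and $\SP(\lambda)/\Ctble$ is countably complete, the \emph{surjectivity} of $\pi$ forces every subset of $\lambda$ to agree mod countable with a member of the $\sigma$-algebra generated by the $B_\sigma$'s; each fiber is an atom over this $\sigma$-algebra (every member contains it or is disjoint from it), so an uncountable fiber, or uncountably many fibers of size $\ge 2$, would produce a subset of $\lambda$ not so representable --- a contradiction. In particular, your claim that the fiber-cardinality hypothesis of Theorem~\ref{thm:ctble-iso} ``comes from cardinality-preservation of $\pi$'' cannot work: cardinality-preservation is a statement about $\sim_{\aleph_1}$-classes and says nothing about individual countable fibers. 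Relatedly, your closing inference (``trivial on the subalgebra, hence trivial outright'') is repairable but misattributed: the correct mechanism is that $\pi$ and the trivial isomorphism given by Theorem~\ref{thm:ctble-iso} both preserve countable suprema and agree on the generators, which by the $Q_B$-property $\sigma$-generate everything mod countable --- not determination of $\pi$ ``by its action on small sets.''
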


\begin{proof}
  Assuming that~\eqref{cdn:nontrivial-aut} holds, we may fix a nontrivial cardinality-preserving isomorphism $\pi$ from
  $\SP(X)/\Ctble$ to $\SP(\lambda)/\Ctble$.  Choose a sequence of sets $\bar{A} = \seq{A_\sigma}{\sigma\in 2^{<\omega}}$
  such that $A_\emptyset = \lambda$, and, for all $\sigma \in 2^{<\omega}$,
  \begin{itemize}
    \item $\pi([N_\sigma \cap X]) = [A_\sigma]$,
    \item $A_\sigma$ is the disjoint union of $A_{\sigma^\cat 0}$ and $A_{\sigma^\cat 1}$.
  \end{itemize}
  Since $X$ is a $Q_B$-set, every subset of $X$ is equal to $C\cap X$ for some Borel $C$.  Since $\SP(\lambda)/\Ctble$
  is countably complete, $\pi([C \cap X]) = [D]$, where $D$ is the set built from $\bar{A}$ in the same way that $C$ is
  built from $\langle N_{\sigma} : \sigma \in 2^{<\omega}\rangle$.  Since $\pi$ is an isomorphism, every subset of
  $\lambda$ is, up to a countable set, a member of the $\sigma$-algebra generated by the sets $A_\sigma$ ($\sigma\in
  2^{<\omega}$). It follows that for each $y \in 2^{\omega}$, the set \[A_{y} = \bigcap\{ A_{y \upharpoonright n} : n
  \in \omega\}\] is countable, and that the set of $y \in 2^{\omega}$ for which $|A_{y}| \geq 2$ is countable. Let $Y$
  be the set of $y \in 2^{\omega}$ for which $|A_{y}| = 1$, and for each $y \in Y$, let $h(y)$ be the unique element of
  $A_{y}$. Then $h$ is injective, $h''(Y)$ is a cocountable subset of $\lambda$, and $h^{-1}(A_\sigma) = N_\sigma\cap Y$
  for each $\sigma \in 2^{<\omega}$.  We claim that $Y$ is the desired set.

  To see that $Y$ is a $Q_{B}$-set, fix $Z \subseteq Y$. Then there exists a $D$ in the $\sigma$-algebra generated by
  $\bar{A}$ such that $D \sd h''(Z)$ is countable. Then $h^{-1}(D) \sd Z$ is countable, and $h^{-1}(D)$ is equal to $B
  \cap Y$ for a Borel set $B \subseteq 2^{\omega}$ which is built from the sets $N_{\sigma}$ in the same way that $D$
  was built from the sets $A_{\sigma}$. Similarly, for each Borel set $B \subseteq 2^{\omega}$, letting $D$ be the set
  built from the sets $A_{\sigma}$ in the way that $B$ was built from the sets $N_{\sigma}$, $\pi([B \cap X]) = [D]$ and
  $h^{-1}(D) = B \cap Y$.  Since $\lambda \setminus h''(Y)$ is countable, \[|B \cap Y| + \aleph_{0} = |D| +
  \aleph_{0}.\] Since $\pi$ is cardinality-preserving \[|D| + \aleph_{0} = |C \cap X| + \aleph_{0}.\] Finally, if $X \sd
  Y$ were a countable set $S$, we could fix a bijection $b \colon X \setminus S \to h''(Y \setminus S)$ by setting
  $b(x)$ to be $h(x)$. Then as above, for each Borel set $B \subseteq 2^{\omega}$, $\pi([B \cap X]) = [h''(B\cap Y)] =
  [b''(B \cap X)]$. Again applying the fact that $X$ is a $Q_{B}$-set, this shows that $\pi$ is trivial.

  Now suppose that $Y \subseteq 2^{\omega}$ witnesses ~\eqref{cdn:inseparable-sets}. Define $\pi : \SP(X)/\Ctble \to
  \SP(Y)/\Ctble$ by
  \[
    \pi([C\cap X]) = [C\cap Y]
  \]
  where $C$ ranges over the Borel sets.  We claim that
  \begin{enumerate}[(i)]
    \item  $\pi$ is well-defined,
    \item  $\pi$ is an isomorphism, and
    \item  $\pi$ is cardinality-preserving and nontrivial.
  \end{enumerate}

  All of these follow easily from our assumptions, except perhaps the claim that $\pi$ is nontrivial.  Suppose then that
  $S$ and $T$ are countable subsets of $2^{\omega}$, and that $f$ is bijection from $X \setminus S$ to $Y \setminus T$
  such that $\pi([A]) = [f''(A \setminus S)]$ for all $A \subseteq X$.  Since $X \sd Y$ is uncountable there are
  uncountably many $x \in X \sm S$ such that $f(x) \neq x$. Moreover, we may fix incompatible $\sigma$, $\tau$ in
  $2^{<\omega}$ such that the set
  \[
    Z = \set{x\in N_\sigma\cap (X\sm S)}{f(x)\in N_\tau}
  \]
  is uncountable.  Then there is a Borel set $B$ such that $Z = B \cap X = (B\cap N_\sigma) \cap X$, so by the
  definition of $\pi$,
	\[
    \pi([Z]) = [(B\cap N_\sigma)\cap Y]
  \]
  On the other hand, $f''(Z) \subseteq N_\tau$, hence $f''(Z)\cap N_\sigma = \emptyset$.  This contradicts our
  assumption that $\pi([Z]) = [f''(Z)]$.
\end{proof}

\begin{remark}
  \label{rmk:calkin}
  We do not know whether it is consistent with ZFC that there exists a nontrivial automorphism of
  $\SP(\omega_1)/\Ctble$.  On the other hand, Theorem~1 of~\cite{Farah-McKenney-Schimmerling.SC} shows that the
  analogous result for Calkin algebras holds under the assumption that $2^{\omega_1} = \omega_2$.  More precisely, let
  $\SB$ denote the C*-algebra of bounded, linear operators on a Hilbert space of dimension $\omega_1$, and let $\SJ$ be
  its (closed, two-sided, $*$-) ideal of operators with separable range.  Then $2^{\omega_1} = \omega_2$ implies there
  are $2^{\omega_2}$-many automorphisms of $\SB/\SJ$.
\end{remark}
	
Notice that condition~\eqref{cdn:inseparable-sets} above fails whenever the union of two $Q_B$-sets of cardinality
$\omega_1$ is also a $Q_B$-set.  Combined with Theorem~\ref{thm:trivial-kappa+}, we obtain the following as a corollary.
	
\begin{corollary}
  \label{cor:PR-mod-ctble}
  Suppose that there is a $Q_B$-set of cardinality $\omega_1$, and the union of any two $Q_B$-sets of cardinality
  $\omega_1$ is a $Q_B$-set.  Then every automorphism of $\SP(\RR)/\Ctble$ is trivial.
\end{corollary}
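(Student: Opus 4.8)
The plan is to reduce, via Theorem~\ref{thm:trivial-kappa+} with $\kappa=\omega$, to the statement that an arbitrary automorphism $\pi$ of $\SP(\RR)/\Ctble$ is trivial on every set of cardinality $\omega_1$, and then to rule out nontrivial restrictions by applying Theorem~\ref{thm:qsets}. Since $\SP(\RR)/\Ctble = \SP(2^\omega)/\SI_{\omega^+}$, Theorem~\ref{thm:trivial-kappa+} tells us it suffices to prove that $\pi$ is trivial on each $Z\subseteq\RR$ with $\card{Z}=\omega_1$. Fix such a $Z$ and a representative $W$ of $\pi([Z])$; the restriction of $\pi$ below $[Z]$ is an isomorphism $\SP(Z)/\Ctble\to\SP(W)/\Ctble$. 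Provided $\card{W}=\omega_1$, this isomorphism is automatically cardinality-preserving, since every nonzero element below $[Z]$ is represented by a set of size $\omega_1$ and must map to a nonzero, hence size-$\omega_1$, element below $[W]$. Transporting along bijections $Z\leftrightarrow\omega_1\leftrightarrow W$ then yields a cardinality-preserving automorphism of $\SP(\omega_1)/\Ctble$.

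The heart of the argument is that, under the hypotheses, $\SP(\omega_1)/\Ctble$ admits no nontrivial cardinality-preserving automorphism. Here I would apply Theorem~\ref{thm:qsets} with $\lambda=\omega_1$ (note $\cf\omega_1>\omega$) to the given $Q_B$-set $X$, and show that condition~\eqref{cdn:inseparable-sets} fails, whence~\eqref{cdn:nontrivial-aut} fails. So suppose $Y$ witnesses~\eqref{cdn:inseparable-sets}. Taking $B=2^\omega$ in the cardinality condition gives $\card{Y}+\aleph_0=\card{X}+\aleph_0=\omega_1$, and $Y$ cannot be countable (else the same instance would force $X$ countable), so $\card{Y}=\omega_1$. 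By the union-closure hypothesis, $X\cup Y$ is then a $Q_B$-set. Since $X\sd Y$ is uncountable, one of $X\sm Y$, $Y\sm X$ is uncountable; say $X\sm Y$ is. As $X\sm Y\subseteq X\cup Y$, there is a Borel $B$ with $B\cap(X\cup Y)=X\sm Y$, so that $\card{B\cap X}=\omega_1$ while $B\cap Y=\emptyset$, contradicting $\card{B\cap X}+\aleph_0=\card{B\cap Y}+\aleph_0$. The case $Y\sm X$ uncountable is symmetric. Thus~\eqref{cdn:inseparable-sets} fails, and every cardinality-preserving automorphism of $\SP(\omega_1)/\Ctble$ is trivial.

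Combining the two steps: for each $Z$ of size $\omega_1$ whose image $W$ also has size $\omega_1$, the induced cardinality-preserving automorphism of $\SP(\omega_1)/\Ctble$ is trivial, hence $\pi$ is trivial on $Z$; Theorem~\ref{thm:trivial-kappa+} then gives that $\pi$ is trivial. The main obstacle is exactly the cardinality-preservation built into the clause ``$\card{W}=\omega_1$''. By Theorem~\ref{thm:balcar-frankiewicz} (with $\kappa=\omega_1$) the only alternative is $\card{W}=\omega_2$, accompanied by an isomorphism $\SP(\omega_1)/\Ctble\cong\SP(\omega_2)/\Ctble$; and since a trivial map is always cardinality-preserving, such an isomorphism would produce a genuinely nontrivial automorphism, so it must be excluded for the statement to hold. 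I would therefore either state the conclusion for cardinality-preserving automorphisms, or argue that the $Q_B$-set hypothesis rules out this isomorphism. The natural attempt is to push such an isomorphism through the surjective $\sigma$-homomorphism $\mathrm{Borel}(2^\omega)\to\SP(X)/\Ctble$ furnished by the $Q_B$-set $X$, obtaining a countable $\sigma$-generating family $\seq{C_n}{n<\omega}$ for $\SP(\omega_2)/\Ctble$; the map $\gamma\mapsto\set{n}{\gamma\in C_n}$ is then countable-to-one (otherwise an uncountable fibre could not be split modulo countable by a generated set), and its range is a $Q_B$-set of cardinality $\omega_2$. Extracting a contradiction from the existence of such a large $Q_B$-set is where I expect the real difficulty to lie.
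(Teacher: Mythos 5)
Your first two paragraphs are exactly the paper's intended argument: the paper proves this corollary in one line by observing that condition~\eqref{cdn:inseparable-sets} of Theorem~\ref{thm:qsets} fails when unions of size-$\omega_1$ $Q_B$-sets are $Q_B$ (your verification --- $\card{Y}=\omega_1$ via $B=2^\omega$, then separating $X\sm Y$ inside the $Q_B$-set $X\cup Y$ --- is the intended one), and then invoking Theorem~\ref{thm:trivial-kappa+} with $\kappa=\omega$. You also deserve credit for flagging the non-cardinality-preserving case, which the paper passes over in silence: if some $Z$ with $\card{Z}=\omega_1$ has $\pi([Z])=[W]$ with $\card{W}=\omega_2$, then $\pi$ cannot be trivial on $Z$ at all (Lemma~\ref{lemma:trivial->bijection} would force $\card{Z}=\card{W}$), and indeed an isomorphism $\SP(\omega_1)/\Ctble\cong\SP(\omega_2)/\Ctble$ would yield a genuinely nontrivial automorphism of $\SP(\RR)/\Ctble$ by swapping disjoint pieces of sizes $\omega_1$ and $\omega_2$ (possible here, since a $Q_B$-set of size $\omega_1$ gives $2^{\aleph_1}=2^{\aleph_0}\ge\aleph_2$). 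So this case really must be refuted from the hypotheses, not merely assumed away; your fallback of weakening the conclusion to cardinality-preserving automorphisms would genuinely weaken the corollary.

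However, your proposed route to the refutation stalls at a real gap, as you acknowledge: the mere existence of a $Q_B$-set $Y$ of cardinality $\omega_2$ is not in evident conflict with the hypotheses --- any two size-$\omega_1$ subsets of a single $Q_B$-set automatically have $Q_B$ union, and nothing obvious prevents $X\cup Y_0$ from being $Q_B$ as well --- so no contradiction can be extracted from the large $Q_B$-set alone. What closes the gap is to retain the isomorphism rather than just $Y$: the first half of the proof of Theorem~\ref{thm:qsets} (which nowhere uses cardinality-preservation) applied to $g\colon\SP(X)/\Ctble\to\SP(\omega_2)/\Ctble$ yields, besides $Y$, a well-defined Borel-trace isomorphism $e([B\cap X])=[B\cap Y]$. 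Pick $Y_0\subseteq Y\sm X$ with $\card{Y_0}=\omega_1$ (possible since $\card{Y}=\omega_2>\card{X}$), and let $X_0\subseteq X$ represent $e^{-1}([Y_0])$; then $[B\cap X_0]\mapsto[B\cap Y_0]$ is a cardinality-preserving isomorphism $\SP(X_0)/\Ctble\to\SP(Y_0)/\Ctble$ between disjoint $Q_B$-sets of size $\omega_1$. If it is trivial, Lemma~\ref{lemma:trivial->bijection} gives $\card{B\cap X_0}+\aleph_0=\card{B\cap Y_0}+\aleph_0$ for every Borel $B$, and separating $X_0$ from $Y_0$ inside the $Q_B$-set $X_0\cup Y_0$ contradicts this; if it is nontrivial, the \eqref{cdn:nontrivial-aut}$\Rightarrow$\eqref{cdn:inseparable-sets} direction of Theorem~\ref{thm:qsets} (whose proof applies verbatim to an isomorphism off a $Q_B$-set) produces a $Q_B$-set $Y'$ of size $\omega_1$ with $X_0\sd Y'$ uncountable and equal Borel traces, and your own separation argument kills that. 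Either way the isomorphism $g$ cannot exist, and with this supplement your outline becomes a complete proof.
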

	
The cardinal characteristic $\mathfrak{q}_{0}$ is the least cardinality of a subset of $2^{\omega}$ which is not a
$Q$-set (see \cite{Banakh-Machura-Zdomskyy}, for instance).  Following \cite{Zapletal.DSTDF}, we let $\mathfrak{z}$ be the
least cardinality of a subset of $2^{\omega}$ which is not a $Q_{B}$-set.  Then $\mathfrak{q}_{0}$ is clearly at most
$\mathfrak{z}$. We note that $\mathfrak{q}_{0}$ is at most $\mathfrak{d}$ (see \cite{Banakh-Machura-Zdomskyy}; we do not
know if the same holds with $\mathfrak{z}$ in place of $\mathfrak{q}_{0}$.) and that
MA$_{\aleph_{1}}$ implies that $\mathfrak{q}_{0} = 2^{\aleph_{0}}$, by the result of Silver mentioned after Definition \ref{def:bqset}. 
	
Corollary~\ref{cor:PR-mod-ctble} and Theorem~\ref{thm:lifts} give the following.

\begin{corollary}\label{cor:zcons}
  If $\mathfrak{z} > \aleph_{1}$ then each of the following hold.
  \begin{itemize}
    \item Every automorphism of $\SP(\RR)/\Ctble$ is trivial.
    \item Every cardinality-preserving automorphism of $\SP(\RR)/\Fin$ is trivial on a cocountable set.
  \end{itemize}
\end{corollary}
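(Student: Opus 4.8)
The plan is to prove the two bullets separately, deriving the first directly from Corollary~\ref{cor:PR-mod-ctble} and then feeding it into Theorem~\ref{thm:lifts} to obtain the second; essentially all the content lies in unwinding the definition of $\mathfrak{z}$ and of $\aleph_1$-almost triviality. For the first bullet I would start from the fact that $\mathfrak{z}$ is the least cardinality of a subset of $2^\omega$ that fails to be a $Q_B$-set, so that $\mathfrak{z} > \aleph_1$ says precisely that every subset of $2^\omega$ of cardinality at most $\aleph_1$ is a $Q_B$-set. From this I can check the two hypotheses of Corollary~\ref{cor:PR-mod-ctble}: there is a $Q_B$-set of cardinality $\omega_1$ (any subset of $2^\omega$ of size $\aleph_1$ will do), and the union of any two $Q_B$-sets of cardinality $\omega_1$ again has cardinality at most $\aleph_1$ and is therefore a $Q_B$-set. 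Corollary~\ref{cor:PR-mod-ctble} then yields at once that every automorphism of $\SP(\RR)/\Ctble$ is trivial.

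For the second bullet I would fix a cardinality-preserving automorphism $\pi$ of $\SP(\RR)/\Fin$, regarded as $\SP(2^{\aleph_0})/\SI_{\aleph_0}$, and set $\kappa = \aleph_0$, $\mu = \aleph_1$, $\lambda = 2^{\aleph_0}$, so that $\kappa$ and $\mu$ are both regular and $\kappa < \mu \le \lambda$. Since $\pi$ is cardinality-preserving, Lemma~\ref{lemma:additivity} guarantees that the induced map $\pi_{\aleph_1}$ of Definition~\ref{definition:pimu} is a genuine automorphism of $\SP(\RR)/\Ctble$, and by the first bullet it is trivial. I would then apply Theorem~\ref{thm:lifts} with these parameters; its hypotheses hold because $\pi$ is cardinality-preserving (the alternative $\mu > \kappa^+$ being unavailable, as here $\mu = \kappa^+$), and so from the triviality of $\pi_\mu = \pi_{\aleph_1}$ we conclude that $\pi$ is $\aleph_1$-almost trivial. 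Finally I would translate this: $\aleph_1$-almost triviality means $\RR \in \SA_{\aleph_1}(\pi)$, the ideal generated by $\ST(\pi)$ and $\Ctble$, so $\RR = B \cup C$ with $B \in \ST(\pi)$ and $C$ countable; then $B \setminus C = \RR \setminus C$ is cocountable and still lies in the ideal $\ST(\pi)$, which is exactly the assertion that $\pi$ is trivial on a cocountable set.

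There is no serious obstacle, as the statement is a repackaging of Corollary~\ref{cor:PR-mod-ctble} and Theorem~\ref{thm:lifts}. The only points requiring care are the two bookkeeping translations, namely that $\mathfrak{z} > \aleph_1$ forces the class of $Q_B$-sets of size $\omega_1$ to be closed under pairwise unions, and that $\aleph_1$-almost triviality of $\pi$ unwinds to triviality on a cocountable set, together with the observation that the cardinality-preservation hypothesis is genuinely needed: it is what allows $\pi_{\aleph_1}$ to be defined via Lemma~\ref{lemma:additivity} and what supplies the hypothesis of Theorem~\ref{thm:lifts} in the boundary case $\mu = \kappa^+$.
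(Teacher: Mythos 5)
Your proposal is correct and follows exactly the route the paper intends: the paper derives this corollary in one line from Corollary~\ref{cor:PR-mod-ctble} (whose hypotheses you correctly verify from $\mathfrak{z} > \aleph_1$) together with Theorem~\ref{thm:lifts} via Lemma~\ref{lemma:additivity}. Your unwinding of $\aleph_1$-almost triviality to triviality on a cocountable set, and your note that cardinality-preservation is needed precisely because $\mu = \kappa^+$ here, are just the bookkeeping the paper leaves implicit.
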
	

In conjunction with the main result of \cite{Shelah-Steprans.2}, we see that $\mathfrak{z} > \aleph_{1}$ implies that
every cardinality-preserving automorphism of $\SP(\lambda)/\Fin$ is trivial on a cocountable set, for every $\lambda$
less than the least strongly inaccessible cardinal.  Veli\v ckovi\'c \cite{Velickovic.OCAA} has shown that
$\MA_{\aleph_{1}}$ is consistent with the existence of a nontrivial automorphism of $\SP(\omega)/\Fin$.

\begin{remark}\label{rem:PR-mod-ctble}
  Corollary~\ref{cor:PR-mod-ctble} applies to automorphisms $\pi$ which may not be induced by automorphisms of
  $\SP(\RR)/\Fin$.  We do not know if the hypothesis of either of Theorem \ref{thm:sep->trivial} and Corollary~\ref{cor:PR-mod-ctble} 
	implies the other. However, as
  $\CSN(\SP(2^{\omega})) = (2^{\aleph_{0}})^{+}$, $\CSN(\mathbf{\Delta^1_1})$ is at least $\mathfrak{z}$.
\end{remark}

\section{Fixed points}\label{sec:fixpoints}

If $\pi$ is an automorphism of a Boolean algebra of the form $\SP(\lambda)/\SI_\kappa$, then a \emph{fixed point} of
$\pi$ is a set $A\subseteq\lambda$ such that $\pi([A]) = [A]$. A fixed point $A$ is \emph{nontrivial} if $A$ and
$\lambda \setminus A$ both have cardinality at least $\kappa$. By the $<\cf\kappa$-completeness of
$\SP(\lambda)/\SI_\kappa$, the set of ($\sim_{\kappa}$-classes of) fixed points of such a $\pi$ is a (possibly trivial)
$<\cf\kappa$-complete subalgebra of $\SP(\lambda)/\SI_\kappa$.

\begin{lemma}
  Suppose that $\kappa \leq \lambda$ are infinite cardinals, and that $\pi$ is an automorphism of
  $\SP(\lambda)/\SI_{\kappa}$. Let $\pi^{*}$ be a selector for $\pi$.  Let $\eta$ be an infinite regular cardinal
  not equal to $\cf\kappa$, and suppose that $\langle A_{\alpha} : \alpha < \eta \rangle$ is a sequence of subsets of
  $\lambda$ such that
  \begin{enumerate}
    \item  for all $\alpha < \beta < \eta$,
    \[
      \card{(\pi^*(A_\alpha)\cup (\pi^*)^{-1}(A_\alpha)) \setminus A_{\beta}} < \kappa
    \]
    \item\label{eq:almost-continuity}  for all $\beta < \eta$,
    \[
      \card{\bigcup\set{A_\alpha}{\alpha < \beta}\sm A_\beta} < \kappa
    \]
  \end{enumerate}
  Then $\bigcup\{ A_{\alpha} : \alpha < \eta\}$ is a fixed point of $\pi$.
\end{lemma}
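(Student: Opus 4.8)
Write $A=\bigcup_{\alpha<\eta}A_\alpha$; the goal is to show $\pi([A])=[A]$. The plan is to identify $[A]$ as the least upper bound of the chain $\seq{[A_\alpha]}{\alpha<\eta}$ and then let the fact that an automorphism of a Boolean algebra is an order isomorphism, hence preserves every least upper bound that exists, do the rest of the work. First observe that the almost-continuity hypothesis~(\ref{eq:almost-continuity}) makes this chain $\le$-increasing: for $\alpha<\beta$ we have $A_\alpha\sm A_\beta\subseteq\bigcup_{\gamma<\beta}A_\gamma\sm A_\beta$, which has cardinality $<\kappa$, so $[A_\alpha]\le[A_\beta]$. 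Since $A_\alpha\subseteq A$ for every $\alpha$, the element $[A]$ is certainly an upper bound for the chain.

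The crux is to show that $[A]$ is the \emph{least} upper bound. Let $[C]$ be any upper bound, so that $\card{A_\alpha\sm C}<\kappa$ for all $\alpha$. Using hypothesis~(\ref{eq:almost-continuity}) I would first check that every proper initial union is $C$-small: writing $A_{<\beta}=\bigcup_{\alpha<\beta}A_\alpha$, we have $A_{<\beta}\sm C\subseteq(A_{<\beta}\sm A_\beta)\cup(A_\beta\sm C)$, and both sets on the right have cardinality $<\kappa$, so $\card{A_{<\beta}\sm C}<\kappa$ for every $\beta<\eta$. Now $A\sm C=\bigcup_{\beta<\eta}(A_{<\beta}\sm C)$ is a $\subseteq$-increasing union of $\eta$ sets each of cardinality $<\kappa$. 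The key cardinal fact, and the one place where the hypothesis $\eta\neq\cf\kappa$ is consumed, is that such a union again has cardinality $<\kappa$. To prove this, suppose $\bigcup_{\beta<\eta}U_\beta$ had cardinality $\ge\kappa$ and fix $W$ inside it with $\card{W}=\kappa$; assigning to each $w\in W$ the least $\beta$ with $w\in U_\beta$, the range is cofinal in $\eta$ (otherwise $W$ would lie in a single $U_\beta$), so $\eta=\cf\eta\le\kappa$. Setting $t_\beta=\card{W\cap U_\beta}<\kappa$: if $\sup_\beta t_\beta=\kappa$, then the $t_\beta$ witness $\cf\kappa\le\eta$, while choosing $\cf\kappa$ indices along which the sizes are cofinal in $\kappa$ and taking their supremum (which stays below $\eta$ whenever $\cf\kappa<\eta$, by regularity of $\eta$) forces $\eta\le\cf\kappa$; and if $\sup_\beta t_\beta<\kappa$, then $\kappa=\card{W}\le\eta\cdot\sup_\beta t_\beta$ forces $\eta\ge\kappa$, whence $\eta=\kappa$ is regular and $\eta=\cf\kappa$. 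In every case $\eta=\cf\kappa$, a contradiction. Thus $\card{A\sm C}<\kappa$, i.e.\ $[A]\le[C]$, so $[A]$ is the least upper bound of the chain.

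To finish one inequality, I would use the first hypothesis. For $\alpha<\beta$ it gives $\card{\pi^*(A_\alpha)\sm A_\beta}<\kappa$; fixing any such $\beta$ and using $A_\beta\subseteq A$ yields $\card{\pi^*(A_\alpha)\sm A}<\kappa$, so $\pi([A_\alpha])=[\pi^*(A_\alpha)]\le[A]$. Hence $[A]$ is an upper bound of the image chain $\seq{\pi([A_\alpha])}{\alpha<\eta}$. Since $\pi$ preserves the least upper bound computed above, $\pi([A])$ is the least upper bound of this image chain, and therefore $\pi([A])\le[A]$. For the reverse inequality I would run the identical argument for the automorphism $\pi^{-1}$ together with the selector $(\pi^*)^{-1}$: the first hypothesis is symmetric in $\pi^*$ and $(\pi^*)^{-1}$, and hypothesis~(\ref{eq:almost-continuity}) refers only to the sequence $\seq{A_\alpha}{\alpha<\eta}$, so both hypotheses hold for this data as well. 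This yields $\pi^{-1}([A])\le[A]$, and applying $\pi$ gives $[A]\le\pi([A])$. Combining the two inequalities, $\pi([A])=[A]$, so $A$ is a fixed point of $\pi$.

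The main obstacle is the cardinal-arithmetic lemma of the second paragraph; once it is in hand, the argument is purely formal Boolean-algebra manipulation (the chain is increasing because of hypothesis~(\ref{eq:almost-continuity}), its image chain is bounded by $[A]$ because of the first hypothesis, and $\pi$ transports suprema). It is worth noting that hypothesis~(\ref{eq:almost-continuity}) is exactly what is needed to bound the proper initial unions $A_{<\beta}\sm C$, the first hypothesis is exactly what bounds the image chain, and the assumption $\eta\neq\cf\kappa$ enters only through the cardinal lemma — consistent with the fact that for $\eta=\cf\kappa$ the conclusion genuinely fails (an increasing union of $\cf\kappa$ sets of size $<\kappa$ can have size $\kappa$).
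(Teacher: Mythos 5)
Your proof is correct, but it is organized differently from the paper's. The paper works by direct contradiction at the level of sets: writing $B=\bigcup_{\alpha<\eta}A_\alpha$ and supposing $\card{B\sm\pi^*(B)}\ge\kappa$, it first extracts a single index $\alpha$ with $\card{A_\alpha\sm\pi^*(B)}\ge\kappa$ (splitting into the cases $\eta<\cf\kappa$ and $\eta>\cf\kappa$, the latter using the regularity of $\eta$ together with hypothesis~(2)), then fixes $X\subseteq A_\alpha\sm\pi^*(B)$ of size exactly $\kappa$ and shows that $(\pi^*)^{-1}(X)$ is almost contained in $A_{\alpha+1}\subseteq B$ by hypothesis~(1), yet almost disjoint from $B$ because $[X]\meet\pi([B])=0$; hence $\card{(\pi^*)^{-1}(X)}<\kappa$, contradicting $\pi^{-1}([X])\neq 0$, with the half $\card{\pi^*(B)\sm B}\ge\kappa$ handled symmetrically via $(\pi^*)^{-1}$. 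You instead argue order-theoretically: $[A]$ is the least upper bound of the chain $\seq{[A_\alpha]}{\alpha<\eta}$, automorphisms preserve existing suprema, and hypothesis~(1) bounds the image chains of both $\pi$ and $\pi^{-1}$ by $[A]$, giving the two inequalities. The combinatorial kernel is the same in both proofs: your cardinal lemma (an increasing union of $\eta$ sets of size $<\kappa$ has size $<\kappa$ when $\eta$ is regular and $\eta\neq\cf\kappa$) is precisely the contrapositive of the paper's step producing one large $A_\alpha\sm\pi^*(B)$, and your uniform proof of it (which also disposes of the case $\eta>\kappa$ at the outset) replaces the paper's two-case split. What your framing buys: once the cardinal lemma is in hand, the rest is purely Boolean-algebraic --- no auxiliary set $X$ is needed, $(\pi^*)^{-1}$ is only ever applied to the sets $A_\alpha$ already named in hypothesis~(1) (so less is demanded of the selector than in the paper's proof, which applies $(\pi^*)^{-1}$ to a freshly chosen $X$), and the argument makes visible the stronger structural fact that the fixed-point subalgebra is closed under suprema of such chains. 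What the paper's route buys is a slightly more economical path: it quantifies over one specific set $C=\pi^*(B)$ rather than verifying leastness of $[A]$ against all upper bounds. Both are complete; your version is the more conceptual of the two.
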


\begin{proof}
  Let $B = \bigcup\set{A_\alpha}{\alpha < \eta}$.  We want to see that $\pi^*(B) \sim_\kappa B$.  Suppose first that
  $|B\sm \pi^*(B)| \ge \kappa$.  We claim that there is some $\alpha < \eta$ such that $\card{A_\alpha\sm \pi^*(B)} \ge
  \kappa$.  If $\eta < \cf{\kappa}$, then this follows directly; on the other hand, if $\eta > \cf{\kappa}$, then (using
  the regularity of $\eta$) there is some $\alpha < \eta$ such that
  \[
    \card{\bigcup\set{A_\beta\sm \pi^*(B)}{\beta < \alpha}} \ge \kappa
  \]
  in which case we have $\card{A_\alpha\sm \pi^*(B)} \ge \kappa$ by~\eqref{eq:almost-continuity}.  
  Now fix some $X\subseteq A_\alpha\sm \pi^*(B)$ with cardinality exactly $\kappa$.  Then
  \[
    \card{(\pi^*)^{-1}(X)\sm A_{\alpha+1}} < \kappa
  \]
  and
  \[
   \card{(\pi^*)^{-1}(X)\cap B} < \kappa
  \]
  hence $\card{(\pi^*)^{-1}(X)} < \kappa$, a contradiction. Supposing instead that
  \[
    \card{\pi^{*}(B)\sm B} \ge\kappa
  \]
  we get that $\card{B \sm (\pi^{*})^{-1}(B)} \geq \kappa$, and we can run the argument just given with $(\pi^{*})^{-1}$
  in place of $\pi^{*}$ to obtain another contradiction.
\end{proof}

Summarizing, we get the following. Part (\ref{bfcon}) of the theorem uses Theorem \ref{thm:balcar-frankiewicz}.  The
proof of part (\ref{bfcon3}) breaks into two cases, one where $\cf\kappa$ is uncountable, and one where $\cf\kappa =
\aleph_{0}$.  

\begin{theorem}
  \label{thm:fixpoint-summary}
  Suppose that $\kappa \leq \lambda$ are infinite cardinals, and that $\pi$ is an automorphism of
  $\SP(\lambda)/\SI_{\kappa}$.
  \begin{enumerate}
    \item The set of $\sim_{\kappa}$-classes of fixed points $\pi$ is a $<\cf\kappa$-complete subalgebra of
    $\SP(\lambda)/\SI_{\kappa}$.

    \item If $\eta$ is a regular cardinal not equal to $\cf\kappa$, and $\langle A_{\alpha} : \alpha <\eta \rangle$ is a
    sequence of fixed points of $\pi$ such that for all $\beta < \eta$,
    \[
      \card{\bigcup\set{A_\alpha}{\alpha < \beta} \sm A_\beta} < \kappa,
    \]
    then $\bigcup\set{A_\alpha}{\alpha < \eta}$ is a fixed point of $\pi$.

    \item\label{bfcon} If $\kappa$ is regular and $\lambda > \kappa$, then for every $A \subseteq \lambda$, there is a fixed point $B
    \subseteq \lambda$ such that $A \subseteq B$ and $|B| \leq |A| + \kappa^{+}$.

    \item\label{bfcon3} If $\pi$ is cardinality-preserving and $\kappa$ is uncountable, then for every $A \subseteq
    \lambda$, there is a fixed point $B \subseteq \lambda$ such that $A \subseteq B$ and $|B| = |A|$.
  \end{enumerate}
\end{theorem}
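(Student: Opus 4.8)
The plan is to treat the four parts in turn, in each case producing a fixed point by closing a given $A\subseteq\lambda$ under a bijective selector $\pi^*$ and its inverse $(\pi^*)^{-1}$ along a chain whose length is tuned to $\cf\kappa$. Part (1) is essentially formal: the fixed points of any Boolean automorphism form a subalgebra, and since $\pi$ preserves every supremum that exists and $\SP(\lambda)/\SI_\kappa$ is $<\cf\kappa$-complete, this subalgebra is $<\cf\kappa$-complete. For part (2) I would invoke the lemma immediately preceding the theorem. Given fixed points $A_\alpha$ satisfying the stated continuity hypothesis (which is exactly hypothesis (\ref{eq:almost-continuity}) of that lemma), it remains only to check hypothesis (1): that $\card{(\pi^*(A_\alpha)\cup(\pi^*)^{-1}(A_\alpha))\sm A_\beta}<\kappa$ for $\alpha<\beta$. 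But $A_\alpha$ being a fixed point gives $\pi^*(A_\alpha)\cup(\pi^*)^{-1}(A_\alpha)\sim_\kappa A_\alpha$, while continuity gives $\card{A_\alpha\sm A_\beta}<\kappa$; combining these yields hypothesis (1), so the lemma delivers the conclusion.

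For part (3) (where $\card A<\kappa$ is trivial, so assume $\card A\ge\kappa$) I would build a continuous increasing chain $\seq{B_\xi}{\xi<\kappa^+}$ with $B_0=A$ and $B_{\xi+1}\supseteq B_\xi\cup\pi^*(B_\xi)\cup(\pi^*)^{-1}(B_\xi)$, and set $B=\bigcup_{\xi<\kappa^+}B_\xi$. This is where Theorem~\ref{thm:balcar-frankiewicz} enters: since $\kappa$ is regular, the relative algebras below $[C]$ and below $[\pi^*(C)]$ are isomorphic copies of $\SP(\card C)/\SI_\kappa$ and $\SP(\card{\pi^*(C)})/\SI_\kappa$, so their cardinalities can differ only as $\kappa$ versus $\kappa^+$; hence $\card{\pi^*(C)}\le\max(\card C,\kappa^+)$, which inductively keeps every $\card{B_\xi}\le\card A+\kappa^+$ and so $\card B\le\card A+\kappa^+$. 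To see $B$ is a fixed point I argue by contradiction (the two directions being symmetric): if $\card{\pi^*(B)\sm B}\ge\kappa$, choose $X\subseteq\pi^*(B)\sm B$ of size $\kappa$; then $(\pi^*)^{-1}(X)\subseteq_\kappa B$ has size $\ge\kappa$, so I may pick $Y'\subseteq(\pi^*)^{-1}(X)\cap B$ of size $\kappa$. As $\kappa<\cf(\kappa^+)=\kappa^+$, we get $Y'\subseteq B_\xi$ for some $\xi$, whence $\pi^*(Y')\subseteq B_{\xi+1}\subseteq B$; but $\pi^*(Y')$ satisfies $\card{\pi^*(Y')\sm X}<\kappa$ and $\pi^*(Y')\cap X=\emptyset$ (since $X\cap B=\emptyset$), forcing $\card{\pi^*(Y')}<\kappa$, which contradicts that $\pi^*(Y')$ represents a nonzero class.

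Part (4), which I expect to be the main obstacle, splits on $\cf\kappa$, and (again) $\card A<\kappa$ is trivial so assume $\card A\ge\kappa$; note every selector satisfies $\card{\pi^*(C)}=\card C$ for $\card C\ge\kappa$ because $\pi$ is cardinality-preserving. When $\cf\kappa>\omega$ a length-$\omega$ closure $B=\bigcup_n B_n$ suffices: cardinality-preservation gives each $\card{B_n}=\card A$, hence $\card B=\card A$, and since $\omega<\cf\kappa$ the automorphism preserves the countable supremum $[\bigcup_n B_n]=\bigvee_n[B_n]$, so $\pi([B])=\bigvee_n[\pi^*(B_n)]\le[B]$ and symmetrically $[B]\le\pi([B])$. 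The genuinely delicate case is $\cf\kappa=\omega$ (so $\kappa\ge\aleph_\omega>\omega_1$), where countable unions are no longer preserved and a $\kappa^+$-chain would overshoot the required size $\card A$.

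In that case I would run the closure along $\omega_1$ instead: a continuous chain $\seq{B_\xi}{\xi<\omega_1}$ closed under $\pi^*$ and $(\pi^*)^{-1}$ at successors, with $B^*=\bigcup_{\xi<\omega_1}B_\xi$. Cardinality-preservation keeps every $\card{B_\xi}\le\card A$ and $\card{B^*}=\omega_1\cdot\card A=\card A$. To verify $\pi^*(B^*)\sim_\kappa B^*$ I again take $X\subseteq\pi^*(B^*)\sm B^*$ of size $\kappa$ and $Y=(\pi^*)^{-1}(X)$, now of size exactly $\kappa$ by cardinality-preservation, with $\card{Y\cap B^*}=\kappa$. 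The crucial reflection step is that, because $\cf\kappa=\omega<\omega_1$, the non-decreasing sequence $\card{Y\cap B_\xi}$ cannot have all terms below $\kappa$ while its supremum equals $\kappa$: a countable cofinal set of indices would have supremum $\xi^*<\omega_1$ with $\card{Y\cap B_{\xi^*}}=\kappa$. Fixing such a stage, $\pi^*(Y\cap B_\xi)\subseteq B_{\xi+1}\subseteq B^*$ is almost contained in $X$ yet disjoint from $B^*$, so $\card{\pi^*(Y\cap B_\xi)}<\kappa$, contradicting cardinality-preservation applied to the $\kappa$-sized set $Y\cap B_\xi$. This pigeonhole — locating a single stage carrying a full $\kappa$-sized chunk of $Y$ without inflating $B^*$ past $\card A$ — is the step I would handle most carefully, since it is precisely what separates $\cf\kappa=\omega$ from $\cf\kappa>\omega$.
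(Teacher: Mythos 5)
Your proposal is correct and takes essentially the same route as the paper's intended proof: parts (1)--(2) from $<\cf\kappa$-completeness and the lemma preceding the theorem, part (3) via a continuous closure chain of length $\kappa^+$ with Theorem~\ref{thm:balcar-frankiewicz} bounding $\card{\pi^*(C)}$ by $\card{C}+\kappa^{+}$, and part (4) split into the cases $\cf\kappa>\omega$ (an $\omega$-chain and preserved countable suprema) and $\cf\kappa=\omega$ (an $\omega_1$-chain with cardinality-preservation controlling sizes), exactly the case division the paper indicates. The only blemishes are cosmetic: you occasionally write exact containments such as $\pi^*(Y')\subseteq B_{\xi+1}$ where a selector only yields $\card{\pi^*(Y')\sm B_{\xi+1}}<\kappa$ (your argument survives this correction, as your own ``almost contained'' phrasing in part (4) acknowledges), and your direct contradiction arguments in (3) and (4) in fact re-derive the preceding lemma, which could instead be invoked directly with $\eta=\kappa^{+}$, $\eta=\omega$, and $\eta=\omega_1$ respectively.
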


Theorem \ref{thm:fixpoint-summary} gives the following corollary.

\begin{corollary}
  If $\lambda > \kappa$ are infinite cardinals, and $\pi$ is an automorphism of $\SP(\lambda)/\SI_{\kappa}$, then $\pi$
  has nontrivial fixed points if at least one of the following holds:
  \begin{itemize}
    \item $\kappa$ is regular and $\lambda > \kappa^{+}$,
    \item $\pi$ is cardinality preserving and $\kappa$ is uncountable.
  \end{itemize}
\end{corollary}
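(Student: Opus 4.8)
The plan is to read off a nontrivial fixed point directly from parts~(\ref{bfcon}) and~(\ref{bfcon3}) of Theorem~\ref{thm:fixpoint-summary}, starting in each case from a seed set of cardinality exactly $\kappa$. Since $\lambda > \kappa$, I may fix some $A \subseteq \lambda$ with $\card{A} = \kappa$. In both cases the fixed point $B$ I produce will contain $A$, so automatically $\card{B} \ge \card{A} = \kappa$; the only real content of the argument is to verify that the complement $\lambda \sm B$ also has cardinality at least $\kappa$, which is the second half of the definition of a nontrivial fixed point.

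Suppose first that $\kappa$ is regular and $\lambda > \kappa^{+}$. Applying part~(\ref{bfcon}) to $A$, I obtain a fixed point $B$ with $A \subseteq B$ and $\card{B} \le \card{A} + \kappa^{+} = \kappa^{+}$. Then $\card{B} \ge \kappa$, and since $\card{B} \le \kappa^{+} < \lambda$, the complement satisfies $\card{\lambda \sm B} = \lambda > \kappa^{+} > \kappa$. Hence $B$ is a nontrivial fixed point. Note that the hypothesis $\lambda > \kappa^{+}$, rather than merely $\lambda > \kappa$, is exactly what the argument needs: part~(\ref{bfcon}) only bounds $\card{B}$ by $\kappa^{+}$, so were $\lambda$ equal to $\kappa^{+}$ the set $B$ could exhaust all but a small part of $\lambda$ and fail to be nontrivial.

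Now suppose that $\pi$ is cardinality-preserving and $\kappa$ is uncountable. Applying part~(\ref{bfcon3}) to $A$, I obtain a fixed point $B$ with $A \subseteq B$ and $\card{B} = \card{A} = \kappa$. Then $\card{B} = \kappa$, and since $\card{B} = \kappa < \lambda$, the complement satisfies $\card{\lambda \sm B} = \lambda > \kappa$. Thus $B$ is again a nontrivial fixed point, which completes the argument.

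There is no genuine obstacle here beyond correctly tracking cardinalities, since the corollary is an immediate consequence of the summary theorem. The one point worth flagging, as indicated above, is the asymmetry between the two hypotheses: the exact cardinality control of part~(\ref{bfcon3}) permits the weaker gap $\lambda > \kappa$ in the cardinality-preserving case, whereas the looser bound in part~(\ref{bfcon}) forces the stronger gap $\lambda > \kappa^{+}$ to guarantee a large complement.
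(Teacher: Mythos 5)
Your proposal is correct and is essentially the paper's own (implicit) argument: the paper derives the corollary directly from parts~(\ref{bfcon}) and~(\ref{bfcon3}) of Theorem~\ref{thm:fixpoint-summary}, exactly as you do, by growing a seed set of cardinality $\kappa$ to a fixed point and checking that both the fixed point and its complement have cardinality at least $\kappa$. Your observation about why the first case needs $\lambda > \kappa^{+}$ while the second needs only $\lambda > \kappa$ correctly identifies the role of the cardinality bounds in the two parts of the theorem.
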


\begin{remark}
  Chodounsk\'y, Dow, Hart and de Vries have shown (\cite{CDHdV}) that if the algebras $\SP(\omega)/\Fin$ and
  $\SP(\omega_{1})/\Fin$ are isomorphic, then there exists a nontrivial automorphism of $\SP(\omega)/\Fin$. A simpler,
  previously known argument (see \cite{Nyikos}) uses the trivial automorphism of $\SP(\omega)/\Fin$ induced by the
  (upwards or  downwards) shift, getting that  if $\SP(\omega)/\Fin$ and $\SP(\omega_{1})/\Fin$ are isomorphic, then
  there exists an automorphism of $\SP(\omega_{1})/\Fin$ without nontrivial fixed points.  An easy argument shows that
  every trivial automorphism of $\SP(\omega_1)/\Fin$ has a club of ordinal fixed points.
\end{remark}

Section \ref{sec:ladders} considers ordinal fixed points of cofinality $\kappa$ for automorphisms of Boolean algebras of
the form $\SP(\kappa^{+})/\SI_{\kappa}$.

\section{Ladder systems}
\label{sec:ladders}

A \emph{ladder} on a limit ordinal $\alpha$ is a cofinal subset of $\alpha$ whose ordertype is the cofinality of
$\alpha$ (we do not require here that the subset be closed). If $S$ is a set of limit ordinals, a \emph{ladder system on
$S$} is a sequence $\seq{L_\alpha}{\alpha\in S}$ such that each $L_\alpha$ is a ladder on the corresponding $\alpha$.  A
ladder system $\seq{L_\alpha}{\alpha\in S}$ satisfies \emph{$\kappa$-uniformization} (for a given cardinal $\kappa$) if
for every sequence of functions $f_\alpha : L_\alpha\to\kappa$ ($\alpha\in S$), there is a function $F : \sup(S) \to
\kappa$ such that for all $\alpha\in S$,
\[
  \set{\beta\in L_\alpha}{F(\beta) \neq f_\alpha(\beta)} \in \SI_{\kappa}.
\]

Here we show that the existence of a cardinality-preserving automorphism of $\SP(\kappa^{+})/\SI_{\kappa}$ without
ordinal fixed points of cofinality $\kappa$ (where $\kappa$ is a regular cardinal) gives rise to a ladder system on a
club subset of $\kappa^{+}$ which satisfies $2$-uniformization (which is easily seen to be equivalent to $\gamma$-uniformization, for 
any $\gamma < \kappa$). 

Given an automorphism $\pi$ of
$\SP(\lambda)/\SI_{\kappa}$, for infinite cardinals $\kappa \leq \lambda$, we say that $\beta \in \lambda$ is a
\emph{closure point} of $\pi$ if for all $\alpha < \beta$, $\pi([\alpha]) < [\beta]$ and $\pi^{-1}([\alpha]) <
[\beta]$.\footnote{See subsection~\ref{subsec:notation} for the meaning of the order $<$ in this context.}
If $\lambda > \kappa$ and $\pi$ is cardinality-preserving, then the set of closure points of $\pi$ is a club subset of
$\lambda$. It is easy to see that every closure point whose cofinality is not $\cf \kappa$ is a fixed point (this does
not require that $\pi$ is cardinality-preserving, or that $\kappa$ is regular).

\begin{theorem}
  \label{thm:2-unif}
  Suppose that
  \begin{itemize}
    \item $\kappa$ is a regular cardinal,
    \item $\pi$ is a cardinality-preserving automorphism of $\SP(\kappa^{+})/\SI_{\kappa}$,
    \item $C$ is the set of closure points of $\pi$, and
    \item $S \subseteq \kappa^{+}$ is the set of $\alpha \in C$ which are not fixed points of $\pi$.
  \end{itemize}
  Then there exist $S_{0}, S_{1}$ such that $S = S_{0} \cup S_{1}$, and such that $S_{0}$ and $S_{1}$ each support a
  ladder system satisfying $2$-uniformization.
\end{theorem}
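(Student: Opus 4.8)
The plan is to reduce to a single direction of ``spillover,'' to read off a canonical ladder at each non-fixed closure point from the way $\pi^{*}$ fails to preserve it, and then to realize $2$-uniformization by coding colourings into a single global subset of $\kappa^{+}$ and transporting them through $\pi$. First I would record the structural facts supplied before the statement: since $\lambda=\kappa^{+}>\kappa$ and $\pi$ is cardinality-preserving, $C$ is a club, and since $\kappa$ is regular (so $\cf\kappa=\kappa$) while every $\alpha<\kappa^{+}$ has $\cf\alpha\le\kappa$, every $\alpha\in S$ has $\cf\alpha=\kappa$ and hence carries a ladder. Fix a bijective selector $\pi^{*}$; as $\pi$ is an automorphism, $\pi^{*}$ carries complements to complements mod $\SI_\kappa$, so in particular $(\pi^{*})^{-1}(\kappa^{+}\sm\pi^{*}(\alpha))\sim_\kappa\kappa^{+}\sm\alpha$, a localization fact I will use repeatedly. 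Set $S_0=\set{\alpha\in S}{\card{\pi^{*}(\alpha)\sm\alpha}=\kappa}$ and $S_1=\set{\alpha\in S}{\card{\alpha\sm\pi^{*}(\alpha)}=\kappa}$. Since each $\alpha\in S$ is not fixed we have $\card{\pi^{*}(\alpha)\sd\alpha}\ge\kappa$, while cardinality-preservation gives $\card{\pi^{*}(\alpha)}=\card\alpha=\kappa$; hence each defect has size exactly $\kappa$ when nonzero, and $S=S_0\cup S_1$. Because closure points and fixed points are unchanged under $\pi\mapsto\pi^{-1}$, and applying $\pi^{-1}$ to $[\alpha\sm\pi^{*}(\alpha)]$ shows $S_1(\pi)=S_0(\pi^{-1})$, it suffices to build a $2$-uniformizing ladder system on $S_0$ for an arbitrary cardinality-preserving $\pi$ and then run the construction on $\pi^{-1}$.

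For $\alpha\in S_0$ I would isolate the local data. Put $M^\alpha=\pi^{*}(\alpha)\sm\alpha$, a subset of $[\alpha,\kappa^{+})$ of size $\kappa$, and $K^\alpha=(\pi^{*})^{-1}(M^\alpha)\cap\alpha$. Cardinality-preservation gives $\card{K^\alpha}=\kappa$, and $K^\alpha$ must be cofinal in $\alpha$: were $K^\alpha\subseteq\beta$ for some $\beta<\alpha$, then $\pi^{*}(K^\alpha)\subseteq_\kappa\pi^{*}(\beta)\subseteq_\kappa\alpha$ by the closure-point property, contradicting $\pi^{*}(K^\alpha)\sim_\kappa M^\alpha\subseteq[\alpha,\kappa^{+})$. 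Thinning $K^\alpha$ to order type $\kappa$ yields a ladder $L_\alpha$ on $\alpha$, and $\pi$ restricts to an isomorphism $\pi_\alpha\colon\SP(K^\alpha)/\SI_\kappa\to\SP(M^\alpha)/\SI_\kappa$ (both algebras coming from sets of size $\kappa$, since $\pi([K^\alpha])=[M^\alpha]$). The role of the localization fact is that for any $P\subseteq\kappa^{+}$ one has $(\pi^{*})^{-1}(P)\cap\alpha\sim_\kappa(\pi^{*})^{-1}(P\cap\pi^{*}(\alpha))\cap\alpha$, so that only the part of $P$ lying inside $\pi^{*}(\alpha)$ can affect $(\pi^{*})^{-1}(P)$ below $\alpha$.

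To uniformize, given a colouring $\seq{f_\alpha}{\alpha\in S_0}$ with $f_\alpha\colon L_\alpha\to 2$, I would set $Y_\alpha=f_\alpha^{-1}(1)\subseteq L_\alpha$ and push it upward through $\pi_\alpha$ to $Z_\alpha\subseteq M^\alpha$ with $\pi_\alpha([Y_\alpha])=[Z_\alpha]$, so that $(\pi^{*})^{-1}(Z_\alpha)\cap L_\alpha\sim_\kappa Y_\alpha$. Let $P=\bigcup\set{Z_\alpha}{\alpha\in S_0}$ and take $F=\chi_{(\pi^{*})^{-1}(P)}$ as the candidate uniformizer. Using the localization fact together with $M^\alpha\subseteq[\alpha,\kappa^{+})$, the contribution of $Z_\gamma$ for $\gamma>\alpha$ to $(\pi^{*})^{-1}(P)$ below $\alpha$ is carried by $M^\gamma\cap\pi^{*}(\alpha)\subseteq M^\gamma\cap M^\alpha$, which I would render small by arranging the spillovers $M^\alpha$ to be pairwise almost disjoint; the contribution of $Z_\alpha$ itself reproduces $Y_\alpha$ on $L_\alpha$. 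Thus $\set{\beta\in L_\alpha}{F(\beta)\neq f_\alpha(\beta)}\in\SI_\kappa$ will hold, \emph{provided} the remaining contributions, those of $Z_\gamma$ for $\gamma<\alpha$, meet $L_\alpha$ in a set of size $<\kappa$.

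This last point is the main obstacle. Each $(\pi^{*})^{-1}(Z_\gamma)$ with $\gamma<\alpha$ lives, mod $\SI_\kappa$, on $K^\gamma\subseteq\gamma$, and although $L_\alpha\cap\gamma$ has order type $<\kappa$ for every single $\gamma$, the union $\bigcup_{\gamma<\alpha}(K^\gamma\cap L_\alpha)$ over the $\kappa$-many $\gamma<\alpha$ can exhaust $L_\alpha$; taming this interference is precisely where the global coherence of $\pi$ must be used, rather than the isolated local isomorphisms $\pi_\alpha$. I would resolve it by constructing the data $(M^\alpha,K^\alpha,L_\alpha)$ by recursion on $\alpha\in S_0$ along the club $C$, at each stage pressing down to freeze a constant pattern of the earlier $K^\gamma$ relative to $\alpha$ on a stationary set and thinning $L_\alpha$ inside the cofinal set $K^\alpha$ so that the cumulative lower trace $\bigcup_{\gamma<\alpha}K^\gamma$ meets $L_\alpha$ in a bounded, hence $<\kappa$, set, while simultaneously keeping the $M^\alpha$ almost disjoint. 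Once the ladder system on $S_0$ is verified to $2$-uniformize, applying the whole construction to $\pi^{-1}$ produces the ladder system on $S_1$, and the equivalence of $2$-uniformization with $\gamma$-uniformization for $\gamma<\kappa$ noted before the statement finishes the proof.
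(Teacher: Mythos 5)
Your setup is essentially the paper's: the same decomposition of $S$ (your $S_0,S_1$ are the paper's $S_1,S_0$), ladders that agree mod $\SI_\kappa$ with the paper's $L_\alpha = \alpha\sm(\pi^*)^{-1}(\alpha)$ (your $K^\alpha = (\pi^*)^{-1}(M^\alpha)\cap\alpha \sim_\kappa \alpha\sm(\pi^*)^{-1}(\alpha)$), and the same coding scheme (transport $Y_\alpha$ to $Z_\alpha\subseteq M^\alpha$, set $P=\bigcup_\alpha Z_\alpha$, and let $F$ be the characteristic function of $(\pi^*)^{-1}(P)$). But the verification has a real gap, in two respects. First, \emph{pairwise almost disjoint} spillovers $M^\alpha$ do not suffice: at a fixed $\alpha$ there are $\kappa$-many relevant $\gamma$, and a union of $\kappa$-many pairwise intersections each of size $<\kappa$ can have size $\kappa$. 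What the paper uses is \emph{exact} disjointness of the coding sets: since $\alpha\in C$ implies $M^\gamma=\pi^*(\gamma)\sm\gamma$ is, mod $\SI_\kappa$, contained in $[\gamma,\min(C\sm(\gamma+1)))$, one may trim the representatives so that the strips $\pi^*(\gamma)\sm\gamma$ are literally pairwise disjoint, whence $P\cap M^\alpha = Z_\alpha$ on the nose (this is the ``simple modification of $\pi^*$'' the paper alludes to).

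Second, your ``main obstacle'' --- interference from $\gamma<\alpha$ through the cumulative trace $\bigcup_{\gamma<\alpha}K^\gamma$ --- is illusory, and the repair you propose for it would not work. The point is to compute $F\rs L_\alpha$ by a \emph{single} application of the automorphism rather than summing $\kappa$-many local preimages: since $\pi^{-1}$ preserves intersections, $(\pi^*)^{-1}(P)\cap L_\alpha \sim_\kappa (\pi^*)^{-1}\bigl(P\cap \pi^*(L_\alpha)\bigr)$, and $\pi^*(L_\alpha)\sim_\kappa M^\alpha$, so with exact disjointness this is $\sim_\kappa(\pi^*)^{-1}(Z_\alpha)\sim_\kappa Y_\alpha$ --- this is exactly your own ``localization fact'' applied to $L_\alpha$ in place of $\alpha$, and it is how the paper concludes ($B_1\cap(\pi^*(\alpha)\sm\alpha)=b^1_\alpha$, hence $A_1\cap L_\alpha\sim_\kappa a_\alpha$), with no control over the $K^\gamma$'s needed at all. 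By contrast, your recursion is unsound as sketched: nothing guarantees $L_\alpha$ can be thinned inside $K^\alpha$ to meet $\bigcup_{\gamma\in S_0\cap\alpha}K^\gamma$ boundedly, since that union of $\kappa$-many size-$\kappa$ sets may cover $K^\alpha$ up to a bounded set; and pressing down gives a conclusion only on a stationary subset of the relevant points, whereas the theorem requires the ladder system to uniformize at \emph{every} $\alpha\in S_0$, because $S_0\cup S_1$ must equal all of $S$.
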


\begin{proof}
  Let
  \[
    S_{0} = \set{\alpha \in S}{[\alpha] \not\leq \pi([\alpha])}
  \]
	and let
  \[
    S_{1} = \set{\alpha \in S}{\pi([\alpha]) \not\leq [\alpha]}
  \]
  Then clearly $S = S_{0} \cup S_{1}$.  Let $\pi^*$ be a bijective selector for $\pi$.  For each $\alpha\in S_0$, let
  $L^{0}_\alpha = \alpha\sm\pi^*(\alpha)$, and for each $\alpha\in S_1$, let $L^{1}_\alpha =
  \alpha\sm(\pi^*)^{-1}(\alpha)$.  Then for each $i \in \{0,1\}$ and each $\alpha \in S_{i}$,
	\begin{itemize}
	  \item $|L^{i}_{\alpha}| = \kappa$,
	  \item $|L^{i}_{\alpha} \cap \beta| < \kappa$ for all $\beta < \alpha$.
	\end{itemize}
  The second of these follows from the fact that $\alpha$ is a closure point of $\pi$, and from the fact that
  $\card{(\pi^{*})^{-1}(L^{0}_{\alpha}) \cap \alpha} < \kappa$ in the case where $\alpha \in S_{0}$ and
  $\card{\pi^{*}(L^{1}_{\alpha}) \cap \alpha} < \kappa$ in the case where $\alpha \in S_{1}$.  It follows that each
  $L^{i}_\alpha$ is a ladder on the corresponding $\alpha$.

  Now suppose we are given $2$-colorings $f^{i}_\alpha : L^{i}_\alpha\to 2$ for each pair $(\alpha, i)$ with $i \in
  \{0,1\}$ and $\alpha \in S_{i}$.  For each such pair $(\alpha, i)$ let $a^{i}_\alpha = (f^{i}_{\alpha})^{-1}(\{1\})$.
	For each $\alpha\in S_0$, put
	\[
    b_\alpha^0 = (\pi^*)^{-1}(a_\alpha)\cap ((\pi^*)^{-1}(\alpha)\sm \alpha)
  \]
  and for each $\alpha\in S_1$, put
	\[
    b_\alpha^1 = \pi^*(a_\alpha)\cap (\pi^*(\alpha)\sm \alpha).
  \]
  Notice that, for each $i \in \{0,1\}$, $b_\alpha^i\cap b_\beta^i = \emptyset$ for distinct $\alpha,\beta\in S_i$.

  Let $B_i = \bigcup\set{b_\alpha^i}{\alpha\in S_i}$, for each $i < 2$.  Define $A_{0} = \pi^*(B_0)$ and $A_{1} =
  (\pi^*)^{-1}(B_1)$.  For each $i \in \{0,1\}$, let $F_{i}$ be the characteristic function of $A_{i}$.  If $\alpha\in
  S_0$, then
	\[
    B_0\cap ((\pi^*)^{-1}(\alpha)\sm \alpha) = b_\alpha^0
  \]
  hence $A_{0}\cap L_\alpha \sim_{\kappa} a_\alpha$.  Similarly, if $\alpha\in S_1$, then
  \[
    B_1\cap (\pi^*(\alpha)\sm \alpha) = b_\alpha^1
  \]
  so $A_{1}\cap L_\alpha \sim_{\kappa} a_\alpha$.  It follows that $F_{0}\rs L_\alpha \sim_{\kappa} f_\alpha$ for all
  $\alpha\in S_{0}$ and $F_{1}\rs L_\alpha \sim_{\kappa} f_\alpha$ for all $\alpha\in S_{1}$.

\end{proof}

The following theorem of Devlin and Shelah then shows that the existence of a cardinality-preserving automorphism of
$\SP(\omega_{1})/\Fin$ without nontrivial ordinal fixed points implies that $2^{\aleph_{0}} = 2^{\aleph_{1}}$.
	
\begin{theorem}(Devlin-Shelah \cite{Devlin-Shelah})
  \label{thm:devlin-shelah}
  Suppose that $\{ S_{\alpha} : \alpha < \omega_{1}\}$ is such that
	\begin{itemize}
  	\item each $S_{\alpha}$ is a subset of $\omega_{1}$ supporting a ladder system satisfying 2-uniformization,
  	\item the diagonal union of $\set{S_{\alpha}}{\alpha < \omega_{1}}$ contains a club subset of $\omega_{1}$.
	\end{itemize}
	Then $2^{\aleph_{0}} = 2^{\aleph_{1}}$.
\end{theorem}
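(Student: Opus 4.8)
The plan is to prove the contrapositive: assuming $2^{\aleph_0} < 2^{\aleph_1}$, I will contradict the existence of the family $\set{S_\alpha}{\alpha < \omega_1}$. The engine is the Devlin--Shelah weak diamond principle, which they proved is equivalent to $2^{\aleph_0} < 2^{\aleph_1}$, in its form relativized to a stationary set: for every stationary $E \subseteq \omega_1$ and every $c : 2^{<\omega_1} \to 2$ there is a predictor $g : \omega_1 \to 2$ such that for every $h : \omega_1 \to 2$ the set $\set{\delta \in E}{c(h\rs\delta) = g(\delta)}$ is stationary. Throughout, ``$2$-uniformization'' means finitely many exceptions per ladder, as $\SI_{\aleph_0} = \Fin$.

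First I would collapse the diagonal-union hypothesis to a single ladder system on a stationary set. Let $D$ be a club contained in the diagonal union, which I may take to consist of limit ordinals. For each $\beta \in D$ there is some $\alpha < \beta$ with $\beta \in S_\alpha$; let $\alpha(\beta)$ be the least such. The map $\beta \mapsto \alpha(\beta)$ is regressive on the stationary set $D$, so by Fodor's lemma there are a stationary $D' \subseteq D$ and a fixed $\alpha^*$ with $\alpha(\beta) = \alpha^*$ for all $\beta \in D'$; thus $D' \subseteq S_{\alpha^*}$. Let $\seq{L_\beta}{\beta \in S_{\alpha^*}}$ be the $2$-uniformizable ladder system supported by $S_{\alpha^*}$. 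Its restriction $\seq{L_\beta}{\beta \in D'}$ is again $2$-uniformizable: given a coloring on $D'$, extend it by the constant $0$ on $S_{\alpha^*}\sm D'$, uniformize the extension, and restrict the resulting global function to the ladders indexed by $D'$. So it suffices to show, under $2^{\aleph_0} < 2^{\aleph_1}$, that no ladder system on a stationary set admits $2$-uniformization.

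For this I would run the weak diamond diagonalization. Fixing increasing enumerations $L_\beta = \set{\lambda^\beta_n}{n < \omega}$, I would design $c : 2^{<\omega_1}\to 2$ so that $c(h\rs\beta)$, for $\beta \in D'$, reads off information about the $\sim_{\aleph_0}$-class of $h\rs L_\beta$ (which is determined by $h\rs\beta$, since $L_\beta \subseteq \beta$), and apply the relativized weak diamond to obtain a predictor $g$; from $g$ I would build a $2$-coloring $\seq{e_\beta}{\beta \in D'}$ of the ladders. The design goal is that any candidate uniformizer $F$ be caught, at stationarily many $\beta \in D'$, by the correct prediction $g(\beta) = c(F\rs\beta)$, and that each catch be convertible into disagreements between $F\rs L_\beta$ and $e_\beta$. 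To actually defeat uniformization these disagreements must be arranged to accumulate to infinitely many on a single ladder, since a uniformizer is permitted finitely many exceptions per ladder. If this is achieved, then no $F$ uniformizes $\seq{e_\beta}{\beta\in D'}$, contradicting the $2$-uniformizability established above.

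The main obstacle is precisely this last point: converting the one-bit-per-ordinal guessing supplied by weak diamond into infinitely many errors concentrated on a single ladder (stationarily-many-ladders-each-with-one-error does \emph{not} violate uniformization). This accumulation is the genuinely delicate heart of the Devlin--Shelah argument, and it is the only place where the cardinal arithmetic $2^{\aleph_0} < 2^{\aleph_1}$ is used; I would either reproduce their construction in detail or, as the result is being quoted here, cite \cite{Devlin-Shelah} for it. By contrast, the Fodor reduction and the passage to a subsystem are routine, and it is worth emphasizing that they are exactly what allows the $\omega_1$-indexed, diagonal-union formulation needed in Theorem~\ref{thm:2-unif} (where one obtains only $S = S_0 \cup S_1$) to be fed into the classical single-stationary-set theorem.
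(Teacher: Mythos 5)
Your Fodor reduction is sound set theory, but it reduces the theorem to a statement that is consistently false, and this is a fatal gap. The ``relativized weak diamond'' you invoke for an arbitrary stationary $E$ is \emph{not} equivalent to $2^{\aleph_0} < 2^{\aleph_1}$: the Devlin--Shelah equivalence is for the principle on $\omega_1$ itself (equivalently, with guessing guaranteed stationary relative to any club), not on an arbitrary stationary set. Indeed, by Shelah's consistency results underlying non-free Whitehead groups under CH, it is consistent that CH holds --- so $2^{\aleph_0} < 2^{\aleph_1}$ --- while some stationary, costationary $S \subseteq \omega_1$ carries a ladder system satisfying $2$-uniformization. Hence your target statement, ``under $2^{\aleph_0} < 2^{\aleph_1}$ no ladder system on a stationary set admits $2$-uniformization,'' is unprovable in ZFC, and after pressing down to a single stationary $D' \subseteq S_{\alpha^*}$ you have discarded exactly the part of the hypothesis that makes the theorem true. (What weak diamond on $\omega_1$ does give is that a $2$-uniformizable ladder system must live on a costationary set; nothing more.)

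The hypothesis must be used in full: all $\omega_1$ many sets $S_\alpha$, each with its own uniformizable system, with the diagonal union containing a club $D$. Assume $2^{\aleph_0} < 2^{\aleph_1}$ and use the genuine weak diamond on $\omega_1$. For $\beta \in D$ let $\alpha(\beta) < \beta$ be least with $\beta \in S_{\alpha(\beta)}$ (this is where the diagonal union enters: it puts $\alpha(\beta)$ \emph{below} $\beta$). Fix a pairing bijection $J : \omega_1 \times \omega_1 \to \omega_1$, view a function $h : \omega_1 \to 2$ as a $J$-code for an $\omega_1$-sequence of functions, and define the colouring $P$ so that at each $\beta$ in the club $D^*$ of points of $D$ closed under $J$, $P(h \rs \beta)$ decodes $F_{\alpha(\beta)} \rs \beta$ from $h \rs \beta$ and outputs its eventual value on the ladder at $\beta$. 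Weak diamond yields a predictor $g$; now colour the ladder at each $\beta$ by the \emph{constant} value $1 - g(\beta)$, let $F_\alpha$ uniformize the system on $S_\alpha$, and let $h$ code $\seq{F_\alpha}{\alpha < \omega_1}$. For every $\beta \in D^*$ the uniformizer agrees with the constant colouring on all but finitely much of the ladder, so $P(h \rs \beta) = 1 - g(\beta)$, and the guessing set avoids the club $D^*$, contradicting weak diamond. Note this also corrects your diagnosis of the ``delicate heart'': with constant colourings and an eventual-value predicate there is no need to accumulate infinitely many disagreements on one ladder --- the contradiction is with the predictor's guaranteed stationary success, not with error-counting against uniformization. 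The genuinely delicate point, which your reduction eliminated, is handling $\omega_1$ many uniformizers simultaneously by coding them into the single input to weak diamond, with $\alpha(\beta) < \beta$ and closure under $J$ making the relevant fragment visible inside $h \rs \beta$. (The paper itself quotes this theorem from \cite{Devlin-Shelah} without proof, so there is no in-paper argument to compare against; but your route cannot be repaired without restoring the full family.)
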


After a simple modification of $\pi^*$, we see that $\pi^*$ moves the ladders $L_\alpha$, for $\alpha\in S_0$, to
disjoint sets; and $(\pi^*)^{-1}$ moves $L_\alpha$ for $\alpha\in S_1$ to disjoint sets.  It follows that they satisfy
uniformization properties stronger than 2-uniformization (but not comparable, as far as we know, with
$\kappa$-uniformization). For instance, they each satisfy the following property : for any partition of $S_{i}$ into
sets $\set{T_{\alpha}}{\alpha < \gamma}$ (for some $\gamma \leq \kappa^{+}$) there exist sets $\set{K_{\alpha}}{\alpha <
\gamma}$ such that
\begin{itemize}
  \item for all $\alpha < \gamma$ and all $\beta \in T_{\alpha}$, $\card{L_{\beta} \setminus K_{\alpha}} < \kappa$,
  \item for every sequence of functions $f_{\alpha} \colon K_{\alpha} \to 2$ $(\alpha < \gamma)$ there exists a function $F \colon
  \kappa^{+} \to 2$ such that, for each $\alpha < \gamma$, $F \restrict K_{\alpha} \sim_{\kappa} f_{\alpha}$.
\end{itemize}

\section{Open questions}
\label{sec:questions}
	
We collect here various open questions related to the material in this paper, some of which have been mentioned above,
and some of which have been asked by others.  First, we ask for various types of automorphisms.

\begin{question}
  Are any of the following consistent with ZFC?
  \begin{enumerate}[(a)]
    \item  There exists an uncountable cardinal $\lambda$ and an automorphism of $\SP(\lambda)/\Fin$ which is not
    trivial on any cocountable set.
    \item  There exists an uncountable cardinal $\lambda$ and an automorphism of $\SP(\lambda)/\Fin$ which is not
    trivial on any uncountable set.  (By \cite{Shelah-Steprans.2}, $\lambda$ would have to be at most $2^{\aleph_{0}}$.)
    \item  There exists an infinite cardinal $\kappa$ and a nontrivial automorphism of $\SP(\kappa^+)/\SI_\kappa$ which
    is trivial on all sets of cardinality $\kappa$.
    \item  There exists an infinite cardinal $\kappa$ such that all automorphisms of $\SP(\kappa)/\SI_\kappa$ are
    trivial, but there is a nontrivial automorphism of $\SP(\kappa^+)/\SI_\kappa$.
    \item  There exist infinite cardinals $\kappa < \lambda$ and a nontrivial automorphism of $\SP(\lambda) /
    \SI_{\kappa^+}$ which is trivial on all sets of cardinality $\kappa^+$.  (By Theorem~\ref{thm:trivial-kappa+},
    $\lambda$ would have to be bigger than $2^\kappa$.)
    \item  There exists an automorphism of $\SP(\omega_1)/\Fin$ with no nontrivial fixed points.  (What if the
    automorphism is required to be cardinality-preserving?  In this case, we would have to have $2^{\aleph_0} =
    2^{\aleph_1}$, by Theorems~\ref{thm:2-unif} and~\ref{thm:devlin-shelah}.)
		\item  There exist uncountable cardinals $\kappa \leq \lambda$ and a nontrivial automorphism of $\SP(\lambda)/\SI_{\kappa}$ (what if 
		$\kappa = \aleph_{1}$?).
    \item  There exists an uncountable cardinal $\lambda$ and an outer automorphism of the Calkin algebra on the Hilbert
    space of dimension $\lambda$.  (See Remark~\ref{rmk:calkin} or~\cite{Farah-McKenney-Schimmerling.SC} for definitions
    and more information.)
  \end{enumerate}
\end{question}

We also ask about the Katowice Problem, question~\eqref{q:KP} below, and its relation to automorphisms.  The
reader is referred to~\cite{KPH-HdV, Chodounsky.SQSD} for more on the Katowice Problem and related
questions.  We note in particular that, in~\cite{Chodounsky.SQSD}, Chodounsk\' y has constructed a model of ZFC where
most of the known consequences of a positive answer to Question~\eqref{q:KP} hold.
\begin{question}
  \begin{enumerate}[(a)]
    \item\label{q:KP}  (Turzanski) Is it consistent with ZFC that the Boolean algebras $\SP(\omega)/\Fin$ and $\SP(\omega_1)/\Fin$
    are isomorphic?
    \item  Is it consistent with ZFC that there is an isomorphism from $\SP(\omega_1)/\Fin$ to $\SP(\omega)/\Fin$ which
    is trivial on all countable sets?
    \item  Is it consistent with ZFC that there exists an automorphism $\pi$ of $\SP(\omega_1)/\Fin$ such that $\pi([A])
    = [B]$ for no infinite $A\subset B\subseteq \omega_1$ with $\omega_1\sm B$ infinite?
    \item  Does the existence of an isomorphism $\SP(\omega)/\Fin\simeq \SP(\omega_1)/\Fin$ imply that there is a
    nontrivial automorphism of $\SP(\omega_1)/\Ctble$?  (Since such an isomorphism implies there is an uncountable
    $Q$-set, by Theorem~\ref{thm:qsets} it is enough to ask whether such an isomorphism implies there exist two
    uncountable $Q_B$-sets, with uncountable difference, which intersect the same Borel sets uncountably.)
    \item Does the existence of an isomorphism between $\SP(\omega)/\Fin$ and $\SP(\omega_{1})/\Fin$ imply that $\mathfrak{z} = \aleph_{1}$?
  \end{enumerate}
\end{question}

\bibliography{biblio}{}
\bibliographystyle{plain}

\end{document}